\documentclass{imsart} 

\usepackage{amsmath,amssymb}
\usepackage{amsthm}

\usepackage[margin=1.6in]{geometry}

\usepackage[numbers]{natbib}

\usepackage{booktabs}       
\usepackage{hyperref}       

\hypersetup{breaklinks=true,
    linkcolor=red,
    citecolor=blue,
    colorlinks=true
}


\newcommand{\pb}[1]{{ #1}}

\newcommand\defas{\stackrel{\text{def}}{=}}

\usepackage{tikz}
\usetikzlibrary{graphs}

\usepackage{thmtools}
\usepackage{mathtools}
\usepackage{thm-restate}
\usepackage{mathrsfs}
\usepackage{tensor}
\usepackage{bm}
\usepackage{lineno} 
\usepackage{pgfplotstable}
\pgfplotsset{compat=1.16}

\usepackage{cleveref}
\usepackage{soul}

\declaretheorem[name=Theorem,numberwithin=section]{theorem}
\declaretheorem[name=Proposition,sibling=theorem]{proposition}
\declaretheorem[name=Lemma,sibling=theorem]{lemma}
\declaretheorem[name=Corollary,sibling=theorem]{corollary}
\declaretheorem[name=Assumption,numberwithin=section]{assumption}

\declaretheorem[name=Remark,style=remark,numberwithin=section]{remark}

\usepackage{xr,refcount}
\usepackage{tcolorbox}
\usepackage{bold-preambule}
\usepackage{math-preambule}

\def\valueOfUstar{\bigl[ K^{2}
		(1 - 1/n)
		(L \tau^{-1} \| \bSigma \|_{\op} 
		(2 + \sqrt{p/n} + 2\eta_n)^{2}
		+1)^{-1}
		-
		3L/n\bigr]_{+}
            }
\def\veps{\varepsilon}

\numberwithin{equation}{section}

\begin{document}

\title{Asymptotic normality of robust $M$-estimators with convex penalty}
\runtitle{Asymptotic normality of robust $M$-estimators with convex penalty}
\author{
    Pierre C Bellec\thanksref{t1},
    Yiwei Shen 
    and
    Cun-Hui Zhang\thanksref{t2}
}
\runauthor{Bellec, Shen and Zhang}
  \thankstext{t1}{
    Research partially supported by the NSF Grants DMS-1811976
    and DMS-1945428.
  }
  
  \thankstext{t2}{
      Research partially supported by the NSF Grants DMS-1513378,
  IIS-1407939, DMS-1721495, IIS-1741390 and CCF-1934924.  }

\affiliation{Rutgers University}
\address{Department of Statistics, Rutgers University
}

\begin{abstract}
    This paper develops asymptotic normality
    results for individual coordinates of
    robust M-estimators with convex penalty
    in high-dimensions, where the dimension $p$ is at most
    of the same order as the sample size $n$, i.e,
    $p/n\le\gamma$ for some fixed constant $\gamma>0$.
    The asymptotic normality requires a bias correction 
    and holds for most coordinates
    of the M-estimator 
    for a large class of loss functions including
    the Huber loss and its smoothed versions
    regularized with a strongly convex penalty.

    The asymptotic variance that characterizes the width
    of the resulting confidence intervals
    is estimated with data-driven quantities.
    This estimate of the variance adapts
    automatically to low ($p/n\to0)$ or high ($p/n \le \gamma$)
    dimensions and does not involve the proximal operators
    seen in previous works on asymptotic normality of M-estimators.
    For the Huber loss, the estimated variance has a simple expression
    involving
    an effective degrees-of-freedom as well
    as an effective sample size.
    The case of the Huber loss with Elastic-Net penalty
    is studied in details and
    a simulation study confirms the theoretical findings. 
    The asymptotic normality results follow from
    Stein formulae for high-dimensional random vectors
    on the sphere developed in the paper which are of independent interest.
\end{abstract}

\maketitle

\textbf{Keywords:}%
Robust estimation, 
M-estimator, 
Asymptotic normality, 
Confidence Intervals, 
High-dimensional statistics,
Bias-correction, 
Stein's formula.

\section{Introduction}

\subsection{Robust inference}
In his seminal paper on robustness,
\citet{huber1964} introduced $M$-estimators for 
an unknown location parameter $\mu\in\R$ from observations
$Y_i = \mu + \veps_i$, {$i=1,\ldots,n$, where $\veps_i$} 
are iid noise random variables distributed as a mixture
$F=(1-\eps) N(0,1) + \eps H$ 
with $H$ being the distribution of the contaminated samples,
possibly chosen by an adversary.
Given a differentiable loss function $\rho:\R\to\R$ and its derivative
$\psi=\rho'$, Huber defined $M$-estimators as minimizers
$\hat \mu =\argmin_{b\in \R} \sum_{i=1}^n\rho(Y_i - b)$,
or equivalently as solutions to $\sum_{i=1}^n\psi(Y_i - b) =0$.
\citet{huber1964} went on to prove consistency
and asymptotic normality of such $M$-estimators, obtaining among
other results that if $\rho$ is convex and $\psi$ is absolutely continuous,
then
under mild assumptions on $F$, the convergence 
$\hat \mu\to \mu$ in probability holds as well as the asymptotic normality
$$
n^{1/2}(\hat \mu - \mu) \to^d {N\bigl(0, \E[\psi^2(\veps_1)]\big/\E[\psi'(\veps_1)]^2\bigr).}
$$

Huber's $M$-estimators were extended to regression models,
where a design matrix $\bX\in\R^{n\times p}$ is observed
together with responses $y_i = \bx_i^\top \bbeta + \veps_i$
where $\bx_1,...,\bx_n$ are the rows of $\bX$ and $\veps_1,...,\veps_n$
are possibly contaminated noise random variables as in the previous
paragraph.
For fixed or slowly growing dimension $p$ as $n\to+\infty$,
consistency and asymptotic normality
of $M$-estimators of the form $\hbbeta = \argmin_{\bb\in\R^p}
\sum_{i=1}^n \rho(y_i - \bx_i^\top\bb)$ were obtained,
see \cite[Section 7]{huber2004robust} or \cite{portnoy1985asymptotic}
among others. Explicitly,
if $\be_j\in\R^p$ is a canonical basis vector and one is interested
in the asymptotic normality of $\hat\beta_j-\beta_j$
for the purpose of confidence intervals,
\cite{portnoy1985asymptotic} shows that
\begin{equation}
    \bigl(\be_j^\top(\bX^\top\bX/n)^{-1} \be_j\bigr)^{-1/2}
    ~ \sqrt n(\hat \beta_j - \beta_j)
 ~
 \to^d
 ~
 N\Bigl(0, \frac{\E[\psi^2(\veps_1)]}{\E[\psi'(\veps_1)]^2}\Bigr)
 \label{eq:intro-asymptotic-normality-low-dimensions}
\end{equation}
if $(p\log n)^{3/2}/n\to0$ and under mild assumptions on $\bX$.
As in the location parameter problem of the previous paragraph,
the asymptotic variance is characterized by the ratio
$\E[\psi^2(\veps_1)]/\E[\psi'(\veps_1)]^2$.  

The last decade has seen striking developments of similar
asymptotic normality results in high-dimensions,
where $p/n\to \gamma$ for some constant $\gamma<1$, cf.
\cite{el_karoui2013robust,bean2013optimal,karoui2013asymptotic,donoho2016high,el_karoui2018impact}.
In terms of asymptotic normality, these works show that
if $\bX$ has iid $N(0,\bSigma)$ rows,
the unregularized $M$-estimator
$\hbbeta = \argmin_{\bb\in\R^p}
\sum_{i=1}^n \rho(y_i - \bx_i^\top\bb)$
satisfies asymptotic normality 
of the form
\begin{equation}
    \label{eq:asymptotic-normality-r2}
(\be_j^\top\bSigma^{-1} \be_j)^{-1/2}
~
\sqrt p (\hat\beta_j - \beta_j) \to^d N(0,r^2)
\end{equation}
where $r>0$
is a deterministic 
constant that captures the high-dimensionality of the problem
\cite[Lemma 1]{el_karoui2013robust}.
The constant $r>0$ is determined by solving a system of nonlinear equations
with two unknowns:
In the unregularized setting, \cite[S2]{el_karoui2013robust} describes
this system of nonlinear equations
with unknowns $(r,c)$ as
\begin{equation}
    \label{system-karoui}
    \left\{
    \begin{aligned}
        \E\bigl[1-[\text{prox}_c(\rho)]'(\veps_1 + r Z)\bigr] 
        &= \gamma,
        \\
        \E\bigl[ \bigl( \veps_1 + rZ - [\text{prox}_c(\rho)](\veps_1 + r Z)\bigr)^2\bigr] 
        &= \gamma r^2
    \end{aligned}
    \right.
    \qquad
    \text{ as }~
    p/n\to\gamma,
\end{equation}
where $Z\sim N(0,1)$ is independent of $\veps_1$,
and $\prox_c(\rho)(t)=\argmin_{u\in\R} \rho(u) + (t-u)^2/(2c)$
denotes the proximal operator of the convex function $t\to c\rho(t)$ 
with derivative $[\text{prox}_c(\rho)]'(t)$.
The optimality conditions
$$
c^{-1}(t - [\text{prox}_c(\rho)](t) )
= \psi([\text{prox}_c(\rho)](t))
$$
of the proximal minimization problem
leads to the expressions
$c^{-2}\gamma r^2 = \E[\psi([\text{prox}_c(\rho)](\veps_1+rZ))^2]$
and
$c^{-1}\gamma = \E[\frac{d}{dt}\psi([\text{prox}_c(\rho)](t)) |_{t=\veps_1+rZ}]$
for the solutions $(r,c)$ to the above system.
Hence \eqref{eq:asymptotic-normality-r2} can be rewritten as
\begin{equation}
    (\be_j^\top\bSigma^{-1}\be_j)^{-1/2}
    \sqrt n(\hat \beta_j - \beta_j)
 ~
 \to^d
 ~
 N\Bigl(0, \frac{
    \E[\psi([\text{prox}_c(\rho)](\veps_1+rZ))^2]
 }{
    \E[\frac{d}{dt}\psi([\text{prox}_c(\rho)](t)) |_{t=\veps_1+rZ}]^2
 }\Bigr),
 \label{eq:intro-asymptotic-normality-p-n-to-gamma}
\end{equation}
see, e.g., \cite[Theorem 4.1 and Corollary 4.6]{donoho2016high}.
These results embody that when $p$ and $n$ are of the same order,
the asymptotic variance in
\eqref{eq:intro-asymptotic-normality-low-dimensions}
must be modified to account for the high-dimensionality of the problem
by (a) replacing $\psi$ in the numerator and $\psi'$ in the denominator
by their compositions with the proximal operator $\text{prox}_c(\rho)$,
and (b) adding the extra Gaussian term $rZ$ to the initial noise $\veps_1$.
The distribution of $\veps_1 + rZ$ is sometimes referred to
as the effective noise.
The Gaussian assumption can be relaxed and some of the above
results are still valid if $\bX$ has iid centered entries with variance one
\cite{karoui2013asymptotic,el_karoui2018impact}.
Despite the subtle introduction of the proximal operator
and the constants $(r,c)$,
it is remarkable that the
informal ratio $\frac{\text{average}[\psi^2]}{\text{average}[(d/dt)\psi]^2}$
unifies the results \eqref{eq:intro-asymptotic-normality-low-dimensions}
and \eqref{eq:intro-asymptotic-normality-p-n-to-gamma}
in both low and high-dimensions.

All results of the previous section are applicable when $p/n\to\gamma$
with $\gamma<1$. For $\gamma > 1$ regularization is required
to ensure the uniqueness of $\hbbeta$,
for instance through an additive penalty which leads to
regularized $M$-estimators of the form
\begin{equation}
    \hbbeta 
    =
    \argmin_{\bb\in\R^p}
    \frac 1n\sum_{i=1}^n \rho(y_i - \bx_i^\top\bb)
    + g(\bb)
    \label{eq:M-estimators-regularized-intro}
\end{equation}
for some convex penalty function $g:\R^p\to\R$.
The case of Ridge regularization with $g(\bb)=\tau\|\bb\|_2^2/2$
for some constant $\tau > 0$ is treated in
\cite{karoui2013asymptotic,el_karoui2018impact}.
In this case, the two solutions $(r,c)$ of
a system of two nonlinear equations similar to \eqref{system-karoui}
characterize the error $\|\hbbeta-\bbeta\|_2$,
asymptotic normality and asymptotic variance
of $\sqrt n((1+a)\hat\beta_j - \beta_j)$
where $a$ is a constant capturing the bias induced by
regularization \cite[Proposition 3.30]{el_karoui2018impact}
and $a$ is a function of $(\gamma,r,c)$.
\cite{thrampoulidis2018precise} characterize the
error $\|\hbbeta-\bbeta\|_2$ for a large class of $(\rho,g)$ pairs
using a technique known as the Convex Gordon Min-Max theorem
pioneered by \cite{stojnic2013framework},
and the recent paper \cite{huang2020} on Approximate Message Passing
focused on $g(\bb)=\lambda\|\bb\|_1$
and $\rho$ either the Huber loss or the absolute value.
Little is known, however, on asymptotic normality
of the regularized estimators \eqref{eq:M-estimators-regularized-intro}
for penalty functions different from the square norm $\bb\mapsto \tau\|\bb\|_2^2$. 
The theories developed in \cite{thrampoulidis2018precise,huang2020} do
not readily provide asymptotic normality results
and regularized $M$-estimators of the form
\eqref{eq:M-estimators-regularized-intro} lack confidence interval
capabilities.
One goal of the present paper is to fill this gap.

A separate line of research develops asymptotic normality results
and confidence intervals
for regularized least-squares estimators of the form
\begin{equation}
    \label{eq:least-squares}
    \hbbeta = \argmin_{\bb\in\R^p}
    \frac{1}{2n} \sum_{i=1}^n (y_i - \bx_i^\top\bb)^2
    + g(\bb)
    =
    \argmin_{\bb\in\R^p}
    \|\by - \bX\bb\|_2^2/(2n)
        + g(\bb)
\end{equation}
where $\bX$ has rows $\bx_1,...,\bx_n$.
Early results studied the Lasso with $g(\bb)=\lambda\|\bb\|_1$
\citep{ZhangSteph14,JavanmardM14a,GeerBR14}
under a sparsity condition $s\log(p)=o(\sqrt n)$ where $s=\|\bbeta\|_0$,
or Ridge regression \citep{buhlmann2013statistical}.
For the Lasso the sparsity condition was later improved
to $s\log^2(p)/n \to 0$ \citep{javanmard2018debiasing},
to $s \log(p/s)/n\to 0$ \citep{bellec_zhang2019debiasing_adjust}
and $p/n\to\gamma\in(0,\infty)$ with
$s\lesssim n/\log(p/s)$ (\citep{JavanmardM14b,miolane2018distribution}
for isotropic Gaussian design and 
\citep{celentano2020lasso}
\citep[Theorem 3.2]{bellec2019second_poincare} for non-isotropic Gaussian design).
For penalty functions beyond the Lasso and Ridge regularization,
\cite[Proposition 4.3(iii)]{celentano2019fundamental} provides
asymptotic normality on average over the coordinates
for permutation invariant penalty function $g$ in \eqref{eq:least-squares},
and \cite[Theorem 3.1]{bellec2019second_poincare} proves asymptotic normality for individual
coordinates of \eqref{eq:least-squares} under a strong convexity assumption.
A high-level message of these works is that one must de-bias
the regularized estimator \eqref{eq:least-squares} in order
to obtain asymptotic normality at the $\sqrt n$-adjusted rate
and construct confidence intervals. In the $p/n\to\gamma$ regime
that is the focus of the present paper,
this bias correction 
takes the following form. 
Under a strong convexity assumption and for $\bX$ with iid $N(\mathbf{0}_p,\bSigma)$ rows,
\cite{bellec2019second_poincare} proves that for most coordinates $j=1,...,p$,
\begin{equation}
\frac{
(n-\df)(\hat\beta_j - \beta_j) + \be_j^\top\bSigma^{-1}\bX^\top(\by-\bX\hbbeta)}{\|\by-\bX\hbbeta\|_2}
\Omega_{jj}^{-1/2}
\to^d N(0,1)
\label{eq:intro-adjustment-asymptotic-normality-df-square-loss}
\end{equation}
where $\Omega_{jj}=\be_j^\top\bSigma^{-1}\be_j$ 
and $\df$ is the effective degrees of freedom of $\hbbeta$
defined as the Jacobian of the map $\by\mapsto \bX\hbbeta$ for fixed $\bX$.
For $\bSigma=\bI_p$ and consequently $\Omega_{jj}=1$,
a similar bias correction proportional to
$\be_j^\top\bX^\top(\by-\bX\hbbeta)$ is visible in the asymptotic normality
result \cite[Proposition 4.3(iii)]{celentano2019fundamental},
although there the coefficients $(n-\df)$ and $\|\by-\bX\hbbeta\|_2$
in \eqref{eq:intro-adjustment-asymptotic-normality-df-square-loss}
are replaced with deterministic scalar
counterparts obtained by solving a system of nonlinear equations
of a similar nature as \eqref{system-karoui}.
Another goal of the present paper is to equip robust $M$-estimators
\eqref{eq:M-estimators-regularized-intro},
with $\rho$ different than the squared loss, with de-biasing
and asymptotic normality results similar to the previous display,
allowing for general robust loss functions $\rho$ coupled with general
convex penalty functions $g$.

\subsection{Contributions}
Our contributions are the following.

\begin{enumerate}
    \item 
        We provide de-biasing and asymptotic normality results
        for robust $M$-estimators with convex penalty functions
        when $p$ and $n$ are of the same order.
        This leads to confidence intervals for the $j$-th coordinate
        $\beta_j$ of the unknown coefficient vector $\bbeta$.
        Asymptotic normality holds for a large class of robust loss functions,
        including the Huber loss and its smoothed versions.
    \item
        Although this 
        bias correction 
        required for 
        asymptotic normality resembles the one-step estimators
        recommended in the theory of classical $M$-estimator
        to improve efficiency
        (e.g., \cite{bickel1975one}), a notable difference 
        from the low-dimensional case is the requirement of a degrees-of-freedom
        adjustment to amplify the one-step correction.
        For the squared loss, this degrees-of-freedom adjustment
        takes the form of multiplication by $(n-\df)$ in
        \eqref{eq:intro-adjustment-asymptotic-normality-df-square-loss};
        one contribution of this paper is to identify the 
        degrees-of-freedom adjustment that leads to asymptotic normality
        for robust and regularized $M$-estimators, beyond the squared loss.
    \item 
        The asymptotic variance is estimated by random,
        data-driven
        quantities, as opposed to the deterministic scalars
        $(r,c)$ that determine the asymptotic variance
        for unregularized estimators in \eqref{eq:intro-asymptotic-normality-p-n-to-gamma}.
        The fact that the asymptotic variance is estimated
        by observable quantities makes this results more suitable
        for confidence intervals (case in point: computing the solution
        $(r,c)$ of \eqref{system-karoui} and the asymptotic variance
        in \eqref{eq:intro-asymptotic-normality-p-n-to-gamma}
        requires the knowledge
        of the noise distribution subject to contamination).
        The asymptotic normality result takes the form
        $$
        \hat V^{-1/2} \Omega_{jj}^{-1/2}
        \sqrt n (\hat\beta_j - \beta_j)
        + [\text{observable bias correction}]
        \approx N(0,1)
        $$
        where the data-driven variance estimate $\hat V$ again
        is a ratio of the form $\frac{\text{average}[\psi^2]}{\text{average}[(d/dt)\psi]^2}$
        for a particular 
        sense of average to be defined in \eqref{eq:asymptotic-variance} below.
        Interestingly, the expression
        for this average and
        $\hat V$ does not involve the proximal mapping 
        in \eqref{eq:intro-asymptotic-normality-p-n-to-gamma}.
        This informal statement will be made precise 
        in \Cref{sec:variance} below.
    \item 
        In order to derive these new asymptotic normality results,
        we develop in \Cref{sec:stein} new identities for random vectors
        uniformly distributed on the Euclidean sphere.
        Although the argument of the present paper for asymptotic normality
        is closely related to that used in \cite{bellec2019second_poincare}
        for the squared loss, this previous theory for the squared loss
        for functions of standard normal vector
        does not extend to robust loss functions due to the lack
        of strong convexity of $\rho$ for robust losses,
        and consequently the lack
        of explicit lower bounds on $\frac1n\sum_{i=1}^n \psi(y_i-\bx_i^\top\hbbeta)^2$.
        Developing
        these new identities and the corresponding asymptotic normality
        results for random vectors
        uniformly distributed on the sphere is a crucial step
        to overcome the lack of global strong convexity of $\rho$ for robust losses
        and to obtain the asymptotic normality results of the previous bullet points.
        These new identities provide novel Stein formulae
        for random vectors on the sphere and may be used
        more broadly for elliptical distributions.
\end{enumerate}

\section{Model and main results}

\subsection{Model and assumptions}

We consider a linear model
\begin{equation} \label{eq:model}
\by = \bX \bbeta + \bep,
\end{equation} 
where $\by \in \R^{n}$, $\bX \in \R^{n \times p}$ and $\bep \in \R^{n}$,
with a regularized M-estimator
\begin{equation} \label{eq:bbeta}
\hbbeta = \argmin_{\bb \in \R^p} 
\frac{1}{n} \sum_{i=1}^n \rho (y_i - \bx_i^\top \bb) + g(\bb),
\end{equation} 
where $\rho: \R \to \R$ is the loss and $g: \R^{p} \to \R$
is the penalty.
\begin{assumption} [Assumptions on the loss] \label{as:rho}
Let $\rho: \R \to \R$ be convex and continuously differentiable on $\R$,
with derivative $\psi = \rho'$ being $L$-Lipschitz with
\begin{equation}
    K^2 \le \psi'(x) + \psi(x)^2 \qquad \text{ for almost every } x \in \R
\end{equation}
for some positive constant $K>0$ independent of $n,p$.
\end{assumption}

Two families of robust losses
that do not satisfy \Cref{as:rho} are
non-differentiable losses such as 
the least absolute deviations $\rho(x)=|x|$,
and $\delta$-insensitive losses such as 
$\rho(x) = (|x|-\delta)_+^2$
as $\psi(x)^2 + \psi'(x)=0$ in a neighborhood of 0.
\Cref{as:rho} is verified by the Huber loss $\rho(x)=\int_0^{|x|}\min(1,t)dt$
with $K=1$,
as well as by any smooth version of the Huber loss, for instance $\rho(x) = \sqrt{1+x^2}$ with $K^2=\frac{23}{27}\approx 0.852$.
The one-sided logistic loss $\rho(x) = \log(1+e^x)$ also satisfies
\Cref{as:rho}.
As $\psi$ is increasing, \Cref{as:rho} implies
that $\rho$ is $K^2/2$-strongly convex in
the interval $\{x\in \R: \psi(x)^2\le K^2/2\}$,
and conversely if $\rho$ is $\mu$-strongly convex
in the interval $\{x\in\R: \psi(x)^2\le C\}$ then
\Cref{as:rho} is satisfied with $K^2 = \min(\mu, C)$.


\begin{table}[ht]
    \centering
\begin{tikzpicture}
    \begin{axis}[
    height=2in,
    width=2.6in,
        ]
        \addplot+[no marks,domain=-3:3,samples=301] {  ( abs(x)<1 ? x^2/2 : abs(x)-1/2 ) };

        \addplot+[no marks,domain=-3:3,samples=301] {  sqrt(1+x^2) - 1 };

        \addplot+[no marks,domain=-3:3,samples=301] {  ( abs(x)<1 ? x^2/2 : ( abs(x)<2 ? 1/6 - abs(x)/2 +x^2 - abs(x)^3/6  : -7/6 + 3*abs(x)/2) ) };
\end{axis}
\end{tikzpicture}
\begin{tikzpicture}
    \begin{axis}[
    height=2in,
    width=2.6in,
        ]
        \addplot+[no marks,domain=-3:3,samples=301] {  ( abs(x) ) };
        \addplot+[no marks,domain=-3:3,samples=301] {  ( abs(x)<1 ? 0 : (abs(x)<2 ? (abs(x)-1)^2/2 : abs(x) - 3/2 )) };
\end{axis}
\end{tikzpicture}

\caption{
    Left: robust loss functions satisfying \Cref{as:rho}:
    the Huber loss $x\mapsto H(x) =  \int_0^{|x|}\min(t,1)dt$,
    its smoothed versions
    $x\mapsto \sqrt{1+x^2}$
    and \\
    $
    x\mapsto 
    \frac{x^2}{2} I\{|x|\le 1\}+
    (\frac{1}{6} - \frac{|x|}{2} + x^2 - \frac{|x|^3}{6})I\{|x|\in(1,2)\}
    +
    (    \frac{-7}{6} + \frac{3|x|}{2})I\{(|x|\ge2)\}
    .
    $
    Right: two loss functions that do not satisfy \Cref{as:rho}:
    the absolute deviation loss $x\mapsto |x|$ and
    the $1$-insensitive loss
    $x\mapsto H[(|x|-1)_{+}]$ where $H(\cdot)$ is the Huber loss.
}
\label{table:rho}
\end{table}

\begin{assumption} [Strongly convexity of $g$] \label{as:g}
For some constant $\tau>0$ independent of $n,p$, the penalty $g: \R^{p} \to \R$ is $\tau$-strongly convex in the sense that $\bx\mapsto g(\bx) - \tau \|\bx\|_2^2/2$ is convex.
\end{assumption}
Some useful characterizations of strong convexity are the following.
Throughout, $\partial g(\bb)$ denotes the subdifferential of $g$ at $\bb\in\R^p$. 
Then $g$ is $\tau$-strongly convex if and only if
\begin{equation}
    \label{strongly-convex-1}
    g (\ba) - g(\bb) \geq \bu^{\top} (\ba - \bb) + (\tau/2) \| \ba - \bb \|_{2}^{2} 
\quad
\text{ for all }
\bu \in \partial g(\ba) \text{ and } \ba,\bb \in \R^{p}.
\end{equation}
Similarly $g$ is $\tau$-strongly convex if and only
for any $\ba,\bb \in \R^{p}$
\begin{equation}
    \label{strongly-convex-2}
    (\bu - \bv)^\top(\ba - \bb)
\ge 
\tau \| \ba - \bb \|_{2}^{2}
\quad
\text{ for all }
\bu \in \partial g(\ba),
\bv \in \partial g(\bb).
\end{equation}

\begin{assumption}
    \label{as:feature}
    The rows of the design matrix $\bX$ are iid $N(\bzero, \bSigma)$ random vectors
    and all the eigenvalues of $\bSigma \in \R^{p\times p}$ are in $[\kappa, 1/\kappa]$ 
for some constant $\kappa\in (0,1)$ independent of $n,p$.
The noise $\bep$ is independent of $\bX$ and admits a density
with respect to the Lebesgue measure.

\end{assumption}

\begin{assumption} \label{as:asymp}
    $p/n\le \gamma$ for some constant $\gamma>0$ independent of $n,p$.
\end{assumption}

\subsection{Notation}
Throughout the paper,
we use the notation
\begin{equation} \label{eq:bpsi}
\bpsi = (\psi(y_{i} - \bx_i^\top \hbbeta))_{i \in [n]},
\qquad 
\bpsi' = (\psi'(y_i - \bx_i^\top \hbbeta))_{i \in [n]},
\qquad
\bh = \hbbeta - \bbeta.
\end{equation} 
For each $j \in [p]$, let $\be_{j}$ denote the $j$-th vector in the standard basis of $\R^{p}$,
and let  
\begin{equation}
\label[definition]{def:Quz}
\Omega_{jj}
=
\be_{j}^{\top} \bSigma^{-1} \be_{j}
,\qquad
\bz_{j}
=
\bX\bSigma^{-1} \Omega_{jj}^{-1} \be_{j}
, \qquad
\bQ_{j}=\bI_{p} - \bSigma^{-1} \Omega_{jj}^{-1} \be_{j}\be_{j}^{\top}.
\end{equation}
We remark that the above definition implies the following properties:
\begin{itemize}
\item
$\bX = \bX \bQ_{j} + \bz_{j} \be_{j}^{\top}$
and 
$\bX \bbeta = \bX \bQ_{j} \bbeta + \bz_{j} \beta_j.$ 

\item
$\bz_{j} \sim N(\bzero, \Omega_{jj}^{-1}\bI_{n})$ is independent of $\bX \bQ_{j}$ (cf. Proposition \ref{prop:indep_zj}).

\item 
Under Assumption \ref{as:feature}, $\Omega_{jj} \in [\kappa, 1/\kappa]$. 
\end{itemize} 
By construction of $\bz_j$ and $\bQ_j$, the response $\by$
can be decomposed as
$\by = \beta_j \bz_j + \bX\bQ_j \bbeta + \bep$
where $\beta_j\in\R$ is the scalar parameter of interest
for a fixed covariate $j\in[p]$,
the vector $\bQ_j \bbeta$ is a high-dimensional nuisance parameter
and $\bep$ is independent noise.
Under the additional assumption of $\veps_i\sim N(0,1)$, 
$\Omega_{jj}^{-1}$ is the Fisher information for the estimation of $\beta_j$. 

In the proof, it will be useful to treat $\bpsi = \bpsi(\bep,\bX)$ as a map from $\R^{n\times (p+1)} \to\R^n$, \pb{formally defined as
\begin{equation*}
\begin{aligned}
\hbbeta (\bep, \bX) & = \argmin_{\bb \in \R^{p}}
\sum _{i \in [n]} 
\frac{\rho ( \ep_{i}  - \bx_{i}^{\top} ( \bb - \bbeta ))}{n}
+
g ( \bb ),
\\\bpsi (\bep, \bX) & = \psi ( \bep + \bX\bbeta - \bX \hbbeta (\bep, \bX) ).
\end{aligned}
\end{equation*}
Since $(\bbeta,\bep)$ are unknown, we cannot compute the derivatives
of $\bpsi$ a priori. However, for a fixed $\bX$, the quantity
$\bpsi(\by-\bX \bbeta ,\bX)$ is observable since
all terms in $(\bbeta,\bep)$ cancel out
($\bpsi(\by-\bX \bbeta ,\bX)$
is simply $\psi(\by-\bX\hbbeta)$ with $\hbbeta$ in \eqref{eq:bbeta}).
We can thus define the observable matrix of size $n\times n$}
\begin{equation}
    [ \nabla_{\by}\bpsi ]^\top 
    \defas
    (\partial/\partial\by)\bpsi(\by-\bX \bbeta,\bX)
\label{hbpsi-y}
\end{equation}
at every point $\by\in\R^n$ where 
$\by\mapsto
\bpsi(\by-\bX \bbeta ,\bX)$
is differentiable holding $\bX$ fixed. 
By Proposition \ref{prop:Lipschitz_in_X} below, 
the map $\by\mapsto {\bpsi(\by-\bX \bbeta,\bX)}$ is $L$-Lipschitz
and $\nabla_{\by}\bpsi$ exists at Lebesgue almost every $\by$,
and with probability one since $\by$ has continuous distribution
under \Cref{as:feature}. 
Furthermore, the gradient \eqref{hbpsi-y} at the currently observed
data $(\by,\bX)$ does not depend on any unknown quantity.
It can be computed from $(\by,\bX)$ either by finding
a closed form expression for \eqref{hbpsi-y} for a given
penalty function, or by approximation using finite difference
or other numerical methods.
Finally, throughout the paper
$\Phi(t)=(2 \pi)^{-1/2} \int_{-\infty}^t \exp(-{u^2/2})du$ is the
standard normal cumulative distribution function.

\subsection{Main result}

In the following result, we consider implicitly a sequence
of integer pairs $(n,p)$, regression problems \eqref{eq:model}
and $M$-estimator \eqref{eq:bbeta}. For instance, one can think
of $p=p_n$ as a nondecreasing function of $n$ and $(g,\bbeta,\hbbeta,\rho)$
are also implicitly indexed by $n$ with values possibly changing with $n$.

\begin{theorem} [Asymptotic Normality result for M-estimator]  \label{thm:main_thm}
Consider the linear model \eqref{eq:model} 
and the M-estimator 
 $\hbbeta$ in \eqref{eq:bbeta}.
Assume $\E [\| \bSigma^{1/2} \bh \|_2^2] \leq \mathscr{R} < + \infty$. 
Let Assumptions~\ref{as:rho}, \ref{as:g},
\ref{as:feature} and
\ref{as:asymp} be fulfilled for constants
$\mathscr{R}, \tau,\kappa,K,L, \gamma>0$ independent of $n,p$.
Define the map
$\by\mapsto \bpsi(\by-\bX \bbeta,\bX)$ and its Jacobian $[\nabla_{\by}\bpsi]^\top$ in \eqref{hbpsi-y} holding $\bX$ fixed.
For each $j\in[p]$ let
\begin{align}
	\xi_{j} 
        &=
	\| \bpsi \|^{-1}
        \bigl[
        \bpsi^{\top} \bz_{j}	
	-
	n^{-1} \| \bz_{j} \|_{2}^{2}  (\beta_{j} - \hbeta_{j}) \trace 
	( 
	{\nabla_{\by}\bpsi}
	)
        \bigr],
        \label{xi_j}
        \\
	\xi_{j}'
        &=
	\| \bpsi \|^{-1}
        \bigl[
        \bpsi^{\top} \bz_{j}	
	-
        \Omega_{jj}^{-1}  (\beta_{j} - \hbeta_{j}) \trace 
	( 
	{\nabla_{\by}\bpsi}
	)
        \bigr].
        \label{xi_j-prime}
\end{align}
Then for any positive sequence $(a_{p})$ with $\lim_{p \to +\infty} a_{p} = +\infty$,
\begin{equation}
    \max_{j \in {J_{n,p}}}	
	\Big|\P \bigl(\Omega_{jj}^{1/2} \xi_{j} \leq t\bigr) - \Phi (t)\Big|
        +
	\Big|\P \bigl(\Omega_{jj}^{1/2} \xi_{j}' \leq t\bigr) - \Phi (t)\Big|
	\to 0,
    \label{eq:J_p-main-theorem}
\end{equation}
for some ${J_{n,p}} \subseteq [p]$ satisfying $|{J_{n,p}}| / p \geq 1 - a_{p}/ p $.
\end{theorem}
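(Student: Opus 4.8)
The plan is to analyze each coordinate $j$ conditionally on $\bX\bQ_j$, exploiting that $\bz_j\sim N(\bzero,\Omega_{jj}^{-1}\bI_n)$ is Gaussian and independent of $\bX\bQ_j$, and to show that after the stated degrees-of-freedom correction the pivot $\Omega_{jj}^{1/2}\xi_j$ (and likewise $\Omega_{jj}^{1/2}\xi_j'$) is close in Kolmogorov distance to a standard normal. First I would record the optimality condition $\tfrac1n\bX^\top\bpsi\in\partial g(\hbbeta)$ and, using the $\tau$-strong convexity of $g$ (\Cref{as:g}) with the $L$-Lipschitz property of $\psi$ (\Cref{as:rho}), confirm that $\by\mapsto\hbbeta$ and $\by\mapsto\bpsi$ are Lipschitz and almost everywhere differentiable, so that $\nabla_\by\bpsi$ in \eqref{hbpsi-y} is well defined. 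The two derivative objects I need are the observable Jacobian $\nabla_\by\bpsi$, obtained by perturbing $\by$ with $\bX$ held fixed, and the Jacobian $\partial\bpsi/\partial\bz_j$, obtained by perturbing $\bz_j$, which simultaneously perturbs the response through $\by=\beta_j\bz_j+\bX\bQ_j\bbeta+\bep$ and the $j$-th design column through $\bX=\bX\bQ_j+\bz_j\be_j^\top$.

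The centering is produced by Gaussian integration by parts in $\bz_j$, conditionally on $\bX\bQ_j$: Stein's identity gives $\E[\bpsi^\top\bz_j\mid\bX\bQ_j]=\Omega_{jj}^{-1}\,\E[\trace(\partial\bpsi/\partial\bz_j)\mid\bX\bQ_j]$. The crux of the computation is that a perturbation of $\bz_j$ moves the residual $\bep-\bX\bh$ directly by the factor $(\beta_j-\hbeta_j)$ (the response contributes $\beta_j$ and the design column contributes $-\hbeta_j$), so that, up to a remainder coming from the differing response of $\hbbeta$ to the two perturbations, $\trace(\partial\bpsi/\partial\bz_j)=(\beta_j-\hbeta_j)\trace(\nabla_\by\bpsi)$. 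This is exactly the correction subtracted in $\xi_j'$, and replacing $\Omega_{jj}^{-1}$ by the empirical $n^{-1}\|\bz_j\|_2^2$ gives $\xi_j$; the equivalence of the two pivots then follows from $n^{-1}\|\bz_j\|_2^2\to\Omega_{jj}^{-1}$.

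To turn centeredness into approximate standard normality I would invoke the Stein formulae for vectors uniform on the sphere from \Cref{sec:stein}. Writing $\tilde\bz_j=\Omega_{jj}^{1/2}\bz_j\sim N(\bzero,\bI_n)$ and $\bu=\bpsi/\|\bpsi\|$, the pivot reads $\Omega_{jj}^{1/2}\xi_j'=\bu^\top\tilde\bz_j-\Omega_{jj}^{-1/2}(\beta_j-\hbeta_j)\trace(\nabla_\by\bpsi)/\|\bpsi\|$, and the task is to show that the unit vector $\bu$ is coupled to $\tilde\bz_j$ weakly enough that $\bu^\top\tilde\bz_j$ is near $N(0,1)$. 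This is where the sphere identities replace the second-order Poincaré argument used for the squared loss in \cite{bellec2019second_poincare}: instead of lower-bounding $\tfrac1n\|\bpsi\|_2^2$ via strong convexity of $\rho$ (which fails for robust losses), I would use \Cref{as:rho} in its averaged form $\tfrac1n\sum_i(\psi_i'+\psi_i^2)\ge K^2$ to control the relevant quotients directly on the sphere. Quantitatively I expect a Kolmogorov-distance bound driven by $\E\|\partial\bpsi/\partial\bz_j\|_{\op}$ and $\E\|\bh\|_2$-type quantities, all finite under $\E[\|\bSigma^{1/2}\bh\|_2^2]\le\mathscr R$ and the eigenvalue bounds of \Cref{as:feature}.

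Finally, the passage to ``most coordinates'' is an averaging argument: I would show $\tfrac1p\sum_{j\in[p]}\E[\mathrm{remainder}_j^2]\to0$ using $p/n\le\gamma$ (\Cref{as:asymp}) and the uniform constants, so that by Markov's inequality the number of $j$ with $\mathrm{remainder}_j$ above a slowly vanishing threshold is $o(p)$, yielding $J_{n,p}$ with $|J_{n,p}|/p\ge1-a_p/p$ on which \eqref{eq:J_p-main-theorem} holds uniformly. The main obstacle is the second step combined with the third: establishing that $\trace(\partial\bpsi/\partial\bz_j)$ equals the observable $(\beta_j-\hbeta_j)\trace(\nabla_\by\bpsi)$ with a remainder that is negligible uniformly over coordinates, and doing so without any global lower bound on $\|\bpsi\|_2^2$ --- precisely the gap that forces the new Stein-on-the-sphere machinery in place of the strong-convexity estimates available for the squared loss.
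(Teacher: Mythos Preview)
Your roadmap matches the paper's strategy closely: condition on $(\bX\bQ_j,\|\bz_j\|_2)$, apply the Stein formulae on the sphere to $\bff=\bpsi/\|\bpsi\|_2$, compare $\xi_j$ with the sphere pivot $\xi_\bff(\bz_j)$, and use Markov over $j$ to extract $J_{n,p}$. Two places where your description is vaguer than the paper and could become genuine gaps if you tried to execute it as written:

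\textbf{The Jacobian identity is structural, not an approximation.} You write that $\trace(\partial\bpsi/\partial\bz_j)=(\beta_j-\hbeta_j)\trace(\nabla_\by\bpsi)$ ``up to a remainder coming from the differing response of $\hbbeta$ to the two perturbations.'' In fact this remainder is \emph{not} generically small; the paper shows instead (\Cref{prop:Lipschitz_in_X}(iv), identity \eqref{diff_ratio}) that if $\bfeta^\top\bpsi=0$ then perturbing $\bz_j$ by $\bfeta$ while simultaneously shifting $\bep$ by $h_j\bfeta$ leaves $\bpsi$ \emph{exactly} invariant, yielding $\bigl((\hbeta_j-\beta_j)\nabla_\by\bpsi^\top+\nabla\bpsi(\bz_j)^\top\bigr)\bP_\bpsi^\perp=\bzero$. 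The leftover rank-one pieces $(\hbeta_j-\beta_j)\trace[\bP_\bpsi\nabla_\by\bpsi^\top]$ and $\trace[\bP_\bpsi^\perp\nabla\bpsi(\bz_j)^\top\bP_{\bz_j}]$ are then bounded in operator norm (not Frobenius norm) and contribute $O(n^{-1})$ after the $\|\bz_j\|_2^2/n$ scaling. Without this exact projection identity, your ``remainder'' would be a full-rank term of size $O(1)$.

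\textbf{The key estimate $\sum_j\E[nI_{\mathscr{E}_j}h_j^2\|\bpsi\|_2^{-2}]\le\mathscr{A}$ (\Cref{lem:inter_step_zj}) is itself a separate argument.} Everything hinges on this bound: it controls $\E_j[\|\bpsi\|_2^{-2}\|\nabla\bpsi(\bz_j)\|_F^2]$ via \eqref{eq:5098}, and hence the sphere-Stein error terms, and it also defines $J_{n,p}$. Its proof applies the \emph{first}-order Stein formula on the sphere to the map $\bz_j\mapsto h_j\bpsi(\|\bpsi\|_2^2+\delta)^{-1}$, uses $K^2\le\psi'+\psi^2$ pointwise (not in the averaged form you wrote) to get $K^2U_j(n-1)\le U_j\|\bpsi\|_2^2+\trace[\bP_{\bz_j}^\perp\bV]$, and closes a quadratic inequality $Ax^2+Bx+C\le0$ in $x=(n\E[\cdots])^{1/2}$ after summing over $j$. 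Your proposal alludes to this step but does not describe its mechanism.

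Two smaller points: the paper conditions on $(\bX\bQ_j,\|\bz_j\|_2)$ throughout so that $\bz_j$ is uniform on a sphere---your opening paragraph conditions only on $\bX\bQ_j$ and invokes Gaussian Stein, then switches to the sphere; the paper never uses the Gaussian version. And normality comes not from a quantitative CLT bound but from the exact fact that $\E_j[\bff(\bz_j)]^\top\bzeta_j/\|\E_j[\bff(\bz_j)]\|_2\sim N(0,1)$ conditionally, combined with $\|\E_j[\bff]\|_2\to1$ and Slutsky.
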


The proof is given in \Cref{sec:proof_main}.
To interpret the above result, 
we remark that for any slowly increasing sequence
$a_p$ such as $a_p = \log p$ or 
$a_p = \log\log p$,
the asymptotic normality \eqref{eq:J_p-main-theorem}
holds uniformly over all coordinates $j=1,...,p$ except $a_p$ of them, 
so that both $\xi_j$ and $\xi_j'$ are asymptotically pivotal for an overwhelmingly majority of $\beta_j$.
Another interpretation is given in the following Corollary:
For any given precision threshold $\upsilon>0$,
there exist at most $a_*$ coordinates $j=1,...,p$ such that
$|\P(\Omega_{jj}^{1/2}\xi_j\le t) - \Phi(t)|\ge \upsilon$
where $a_*$ is a constant independent of $n,p$.

\begin{corollary}
    \label[corollary]{cor:main}
    Let the setting and assumptions of Theorem~\ref{thm:main_thm}
    be fulfilled. For any arbitrarily small
    constant $\upsilon >0$ independent of $n,p$, define
    $$
    J_{n,p}^\upsilon= \Bigl\{j\in[p]:
        \Big|\P(\Omega_{jj}^{1/2}\xi_j \le t)
        - \Phi(t)
        \Big| > \upsilon
    \Bigr\}.
    $$
    Then, 
    $\sup_{n,p}|J_{n,p}^\upsilon| \le a_*$ for a certain 
    constant $a_*$ depending on $\{\upsilon,\tau,\mathscr{R},\gamma,L,K, t\}$ only.
    In other words, for any $(n,p)$ with $p/n\le\gamma$
    there are at most a constant number of coordinates $j=1,...,p$
    such that
    $\big|\P(\Omega_{jj}^{1/2}\xi_j \le t)
        - \Phi(t)
        \big| > \upsilon
    $.
    The same conclusion holds with $\xi_j$ replaced by $\xi_j'$.
\end{corollary}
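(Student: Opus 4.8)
The plan is to deduce \Cref{cor:main} from \Cref{thm:main_thm} by a contradiction argument that converts the asymptotic statement of the theorem into a uniform finite bound. Fix the constants $\upsilon,\tau,\kappa,K,L,\gamma$ and $t$, and let $\mathcal{P}$ denote the class of all problem instances $(n,p,\bSigma,g,\rho,\bbeta,\bep)$ satisfying Assumptions~\ref{as:rho}, \ref{as:g}, \ref{as:feature} and \ref{as:asymp} together with $\E[\|\bSigma^{1/2}\bh\|_2^2]\le\mathscr{R}$, all with these fixed constants. For $P\in\mathcal{P}$ write $b(P)=|J_{n,p}^\upsilon|$ for the number of coordinates whose deviation exceeds $\upsilon$. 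It suffices to show that $a_*:=\sup_{P\in\mathcal{P}} b(P)$ is finite, since $a_*$ is then by construction a function of the constants that define $\mathcal{P}$, i.e.\ it depends only on $\{\upsilon,\tau,\mathscr{R},\gamma,L,K,t\}$ as claimed.

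Suppose toward a contradiction that $\sup_{P\in\mathcal{P}} b(P)=+\infty$. Then there is a sequence $(P_k)_{k\ge1}$ in $\mathcal{P}$ with $b(P_k)\to+\infty$. Since $b(P_k)\le p_k$ and $p_k\le\gamma n_k$ by \Cref{as:asymp}, both $p_k\to\infty$ and $n_k\to\infty$; as $b(P_k)\to\infty$ forces only finitely many instances to share any fixed value of $p$, passing to a subsequence I may assume that $p_k$ is strictly increasing and that $b(P_k)$ is nondecreasing with $b(P_k)\to\infty$. This subsequence is then a legitimate sequence of regression problems in the sense of \Cref{thm:main_thm}, all sharing the same constants.

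The key step is to feed the theorem a sequence $(a_p)$ that diverges yet stays below the bad-set sizes along the chosen subsequence. Concretely, I set $a_{p_k}=\sqrt{b(P_k)}$ at the sampled indices, which is nondecreasing in $k$ by the previous arrangement, and extend $a_p$ to a positive nondecreasing sequence with $a_p\to+\infty$; this is possible because $p_k$ is strictly increasing and $\sqrt{b(P_k)}\to\infty$, while $\sqrt{b(P_k)}<b(P_k)$ for every $k$ with $b(P_k)>1$. Applying \Cref{thm:main_thm} with this $(a_p)$ yields sets $J_{n_k,p_k}\subseteq[p_k]$ with $|J_{n_k,p_k}|\ge p_k-a_{p_k}$ and
\begin{equation*}
\max_{j\in J_{n_k,p_k}}\Big|\P\bigl(\Omega_{jj}^{1/2}\xi_j\le t\bigr)-\Phi(t)\Big|\longrightarrow 0
\qquad\text{as }k\to\infty.
\end{equation*}
Hence for all large $k$ every coordinate in $J_{n_k,p_k}$ has deviation strictly below $\upsilon$, so $J_{n_k,p_k}^\upsilon$ is disjoint from $J_{n_k,p_k}$ and therefore $b(P_k)=|J_{n_k,p_k}^\upsilon|\le|[p_k]\setminus J_{n_k,p_k}|\le a_{p_k}=\sqrt{b(P_k)}$. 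Since $x\le\sqrt{x}$ forces $x\le1$ for $x\ge0$, this gives $b(P_k)\le1$ for all large $k$, contradicting $b(P_k)\to\infty$.

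I expect the only delicate point to be the bookkeeping of quantifiers rather than any hard estimate: one must take the supremum over the entire class $\mathcal{P}$ of instances sharing the fixed constants (not over a single prescribed sequence) so that the resulting $a_*$ is manifestly a function of those constants, and one must choose $(a_p)$ only \emph{after} isolating the offending subsequence so that it diverges while remaining below $b(P_k)$. The statement for $\xi_j'$ follows verbatim, since the deviation of $\Omega_{jj}^{1/2}\xi_j'$ appears alongside that of $\Omega_{jj}^{1/2}\xi_j$ inside the maximum in \eqref{eq:J_p-main-theorem}, so the identical subsequence argument applies.
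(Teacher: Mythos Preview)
Your proof is correct and follows essentially the same contradiction/subsequence/counting route as the paper: assume the bad sets $J_{n,p}^\upsilon$ are unbounded, extract a subsequence, apply \Cref{thm:main_thm} with a tailored diverging $(a_p)$, and obtain a cardinality contradiction from the disjointness of $J_{n,p}$ and $J_{n,p}^\upsilon$. The only cosmetic differences are that you take $a_{p_k}=\sqrt{b(P_k)}$ whereas the paper effectively takes $a_p=|J_{n,p}^{\upsilon_*}|/2$, and you frame the argument over a class $\mathcal{P}$ of instances to make uniformity of $a_*$ explicit; one minor slip is that your opening list of fixed constants includes $\kappa$ but omits $\mathscr{R}$, while your concluding dependency list matches the corollary.
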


\begin{proof}
    [of \Cref{cor:main}]
    We proceed by contradiction. If the claim does not hold,
    there exists a constant $\upsilon_*>0$ such that 
    $|J_{n,p}^{\upsilon_*}|\ge 2a_p$ for an unbounded 
    sequence $a_p$.
    By extracting a subsequence if necessary, we may assume
    without loss of generality 
    that $a_p$ is monotonically increasing with $a_p\to+\infty$.
    By Theorem~\ref{thm:main_thm} there exists $J_{n,p}\subset [p]$
    such that \eqref{eq:J_p-main-theorem} holds.
    By definition of $J_{n,p}^{\upsilon_*}$ and ${J_{n,p}}$, we 
    have ${J_{n,p}} \cap {J_{n,p}^{\upsilon_*}} = \emptyset$ for $p$ large enough.
    This implies that
    $p \ge
    |{J_{n,p}}|
    + |{J_{n,p}^{\upsilon_*}}|
    \ge (p - a_ p) + 2 a_p$
    for $p$ large enough, a contradiction.
\end{proof}

\subsection{Data-driven variance estimate}
\label{sec:variance}

Except for at most a constant number of coordinates $j\in[p]$, the
approximation
\begin{equation} \label{eq:normality}
\hat V^{-1/2} \Omega_{jj}^{-1/2}
\sqrt n (\hat\beta_j - \beta_j)
+ [\text{{bias correction}}]
\approx N(0,1)
\end{equation}
{holds} where the observable {bias correction} is given
by the first term in \eqref{xi_j-prime} and the data-driven
variance estimate
that characterizes the length of confidence intervals
for $\beta_j$ is
\begin{equation}
\hat V
=
\frac{\|\bpsi\|_2^2/n}{(\trace({\nabla_{\by}\bpsi})/n)^2}
= \frac{
    n^{-1} \sum_{i=1}^n \psi_i^2
}{
    \bigl(n^{-1} \sum_{i=1}^n (\partial/\partial y_i) \psi_i \bigr)^2
}. 
\label{eq:asymptotic-variance}
\end{equation}
This ratio of an average of $\psi^2$ by a squared average of a derivative of 
$\psi$
mirrors the robust asymptotic results in \eqref{eq:intro-asymptotic-normality-low-dimensions} and \eqref{eq:intro-asymptotic-normality-p-n-to-gamma}
as discussed in the introduction. 
Confidence intervals can be readily constructed from
    \Cref{thm:main_thm} or the informal approximation \eqref{eq:normality}:
    a 95\%-confidence interval for $\beta_j$ is given by
    $\hat\beta_j +(\Omega_{jj}\hat V/n)^{1/2}([\text{bias correction}] \pm 1.96)$,
    that is,
    $$
    \Bigl[
    \hat\beta_j
    + \frac{\Omega_{jj} \bpsi^\top\bz_j}{\trace[\nabla_{\by} \bpsi]}
    -
    \Bigl(\frac{\Omega_{jj}\hat V}{n}\Bigr)^{1/2}1.96,~~
    \hat\beta_j
    + \frac{\Omega_{jj} \bpsi^\top\bz_j}{\trace[\nabla_{\by} \bpsi]}
    +
    \Bigl(\frac{\Omega_{jj}\hat V}{n}\Bigr)^{1/2}1.96
    \Bigr].
    $$
In contrast with the asymptotic variance in \eqref{eq:intro-asymptotic-normality-p-n-to-gamma}, the above $\hat V$ involves observable quantities.
In particular, while the asymptotic variance
in \eqref{eq:intro-asymptotic-normality-p-n-to-gamma}
depends on the distribution of the noise through the solutions
$(r,c)$ of the system \eqref{system-karoui},
the knowledge of the noise distribution is not required to compute
$\hat V$ and construct confidence intervals for $\beta_j$.

\Cref{thm:main_thm} is valid for $p/n\le \gamma$,
without requiring a specific limit for the ratio $p/n$.
\Cref{thm:main_thm} is also valid for $p=o(n)$, so that
\Cref{thm:main_thm} and the estimated asymptotic variance
\eqref{eq:asymptotic-variance}
unifies both low- and high-dimensional asymptotic
normality results.

For the Huber loss
\begin{equation}
    \label{eq:Huber-loss-H}
H(u) 
=
\int_0^{|u|}\min(1,t) dt
=
\begin{cases}
    u^2/2 & \text{ if } |u|\le 1, 
    \\
    |u|-1/2 & \text{ if } |u|>1,
\end{cases}
\end{equation}
the quantity $\trace[\nabla_{\by}\bpsi]$ has a simpler form:
By the chain rule
$$\trace[\nabla_{\by}\bpsi] = \trace [\diag \bpsi']
-
\trace[\diag(\bpsi') (\partial/\partial\by) \bX\hbbeta]
$$
where the differentiation is understood holding $\bX$ fixed
as in \eqref{hbpsi-y}.
With $\hat n = \trace[\diag \bpsi']$ the number of observations
such that $y_i-\bx_i^\top\hbbeta$ falls in the range where the Huber loss
is quadratic and $\df = \trace[\diag(\bpsi') (\partial/\partial\by) \bX\hbbeta]$
representing the degrees-of-freedom of the M-estimator $\hbbeta$,
the quantity $\trace[\nabla_{\by}\bpsi]$
appearing in the dominator of $\hat V$ simplifies to
$\trace[\nabla_{\by}\bpsi] = \hat n - \df$.
In this case, the asymptotic normality for $\xi_j'$  in \Cref{thm:main_thm}
takes the form
\begin{equation}
    \Omega_{jj}^{1/2}
    \xi_j'
    =
\frac{(\hat n - \df)(\hat\beta_j - \beta_j) + 
    \Omega_{jj}\bz_j^\top \psi(\by-\bX\hbbeta)
}{\|\psi(\by-\bX\hbbeta)\|_2} \Omega_{jj}^{-1/2}
\quad 
\to^d
\quad
N(0,1)
\label{eq:xi_j_prime-ENet-N01}
\end{equation}
uniformly over all $j\in J_{n,p}$.
This extends the asymptotic normality result 
\eqref{eq:intro-adjustment-asymptotic-normality-df-square-loss}
to the Huber loss with the following important modifications:
the sample size $n$ is replaced by $\hat n$
and the residuals $\by-\bX\hbbeta$
are replaced by $\bpsi = \psi(\by-\bX\hbbeta)$.
The variance $\hat V$ that determines the length of the confidence
interval resulting from \eqref{eq:normality} presents a trade-off
between $\|\bpsi\|^2/n$, an effective sample size $\hat n$
and the degrees-of-freedom $\df$: For confidence intervals with small length,
the M-estimator $\hbbeta$ should have small residuals measured
as $\|\psi(\by-\bX\hbbeta)\|_2$, small degrees-of-freedom $\df$,
and large effective sample size $\hat n$.

\subsection{Example}
This section specializes the above results to scaled versions
of the Huber loss
\eqref{eq:Huber-loss-H}
and the Elastic-Net penalty
$g(\bb) = \lambda \|\bb\|_1 + \tau\|\bb\|_2^2/2$
for tuning parameters $\lambda,\tau>0$.
We consider the $M$-estimator
\begin{equation}
\hbbeta =
\argmin_{\bb\in\R^p}
\frac 1 n \sum_{i=1}^n \sigma^{2} H(\sigma^{-1}(y_i - \bx_i^\top\bb))
+
\lambda \|\bb\|_1 + \tau\|\bb\|_2^2/2,
\label{eq:huber-e-net}
\end{equation}
which corresponds to the scaled Huber
loss $\rho(u) = \sigma^2 H(\sigma^{-1} u)$
where $\sigma>0$ is a scaling parameter.
Since the derivative $H'$ is 1-Lipschitz, so is $\psi=\rho'$.
Furthermore, 
$\psi'(u) = H''(\sigma^{-1}u)= 1$ for $|u| \le \sigma$ and 
$|\psi(u)| = \sigma |H'(\sigma^{-1} u )|
=\sigma$ for $|u| > \sigma$,
so that $\min_{x\in \R}[\psi^2(x) + \psi'(x)]\ge \min(1,\sigma^2)$
and \Cref{as:rho} is satisfied with $L=1$ and $K^2=\min(1,\sigma^2)$.
\Cref{as:g} is also satisfied as the penalty is the sum of the $\ell_1$
norm plus $\tau \|\bb\|^2/2$.
The quantity
$\trace[\nabla_{\by}\bpsi]$ appearing in \Cref{thm:main_thm} in the denominator
of estimated variance $\hat V$ is computed in \cite[Proposition 2.3]{bellec2020out}: For almost every $(\bX,\by)$,
\begin{align*}
    \trace[\nabla_{\by}\bpsi] &=
\hat n - \df,
\\
\df &=\trace\bigl[\diag(\bpsi')\bX_{\hat S}(\bX_{\hat S}^\top\diag(\bpsi')\bX_{\hat S} + n \tau \bI_{|\hat S|})^{-1} \bX_{\hat S}^\top\diag(\bpsi')\bigr],
\end{align*}
where $\hat n = \trace[\diag(\bpsi')]= |\{i\in[n]: \psi'(y_i-\bx_i^\top\hbbeta)=1\}|$
is the number of observations $i=1,...,n$ such that
$y_i - \bx_i^\top\hbbeta$ falls in the range where $\rho$ is quadratic,
$\hat S =\{j\in[p]: \hat\beta_j\ne 0 \}$ 
and $\bX_{\hat S}\in\R^{n\times |\hat S|}$
is the submatrix of $\bX$ containing columns indexed in $\hat S$.

\subsection{Simulation study}
\label{sec:simulation}
We provide here simulations to showcase the asymptotic normality
in the Huber loss and Elastic-Net penalty example of the previous section.

\begin{table}[p]
    \centering
\begin{tabular}{@{}|c|c|c|c|c|@{}}
\toprule
$ (\lambda,\tau) $      & $(n^{-1/2},0.1)$    & $(n^{-1/2},0)$     &  $(2n^{-1/2},0.1)$  & $(2n^{-1/2},0)$   \\ \midrule
$ \hat{n} $             & $81.2 ~~\pm 5.8$ & $111.9 ~~\pm 6.4$ & $38.3 ~~\pm 4.6$ & $49.3 ~~\pm 5.5$ \\\midrule
$ \df $                 & $53.6 ~~\pm 3.9$  & $81.4 ~~\pm 5.1$  & $14.2 ~~\pm 2.3$ & $24.8 ~~\pm 4.1$  \\\midrule
$ |\hat{S}| $           & $96.4 ~~\pm 6.8$ & $81.4 ~~\pm 5.1$  & $26.5 ~~\pm 4.6$  & $24.8 ~~\pm 4.1$  \\\midrule
$ \sqrt{\hat{V} / n} $  & $0.44 ~~\pm 0.06$  & $0.37 ~~\pm 0.07$   & $0.56 ~~\pm 0.09$  & $0.55 ~~\pm 0.11$  \\\midrule
$\Omega_{jj}^{1/2}\xi_j'$                                       &
\includegraphics[width=23mm]{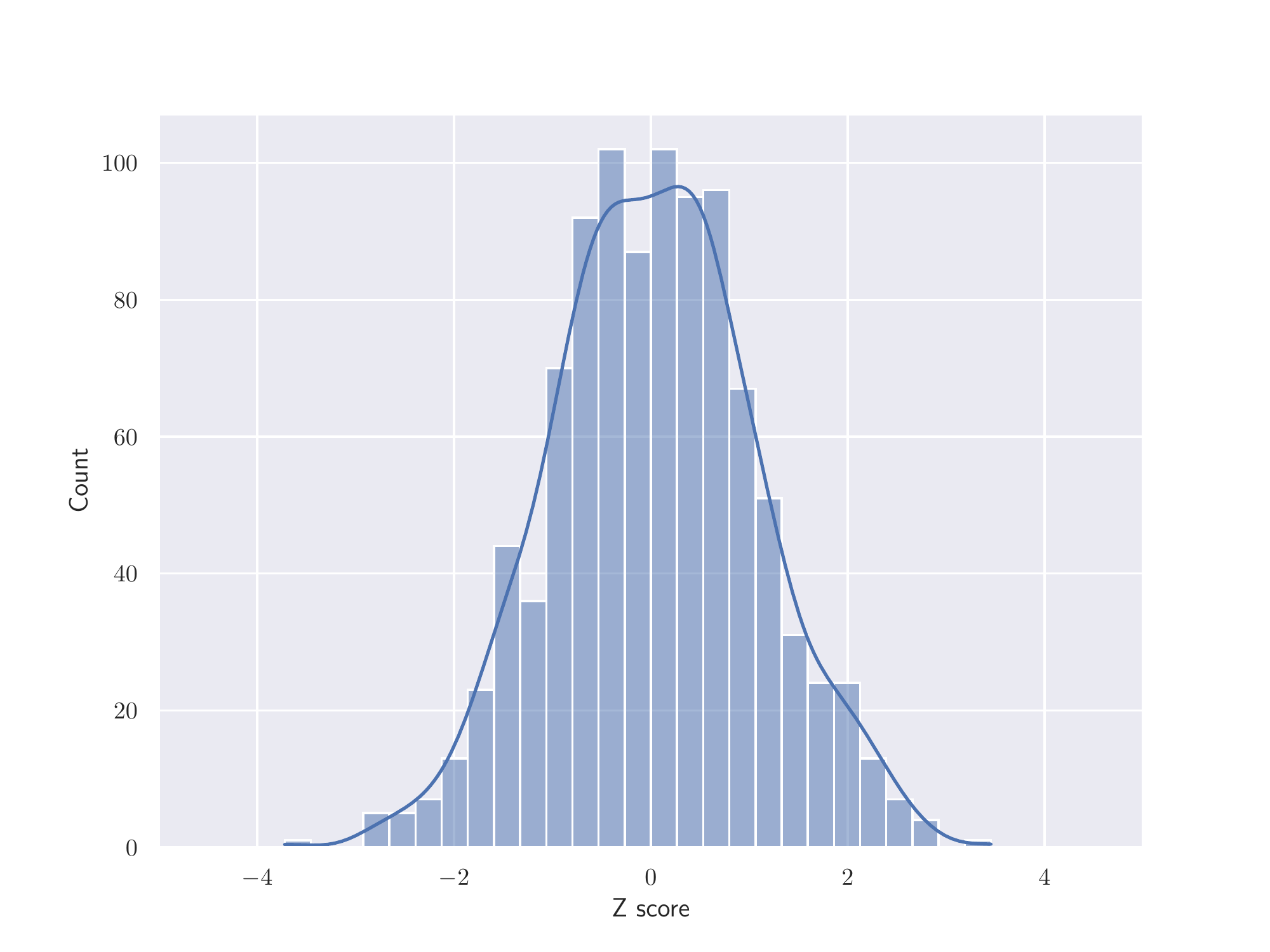}   &
\includegraphics[width=23mm]{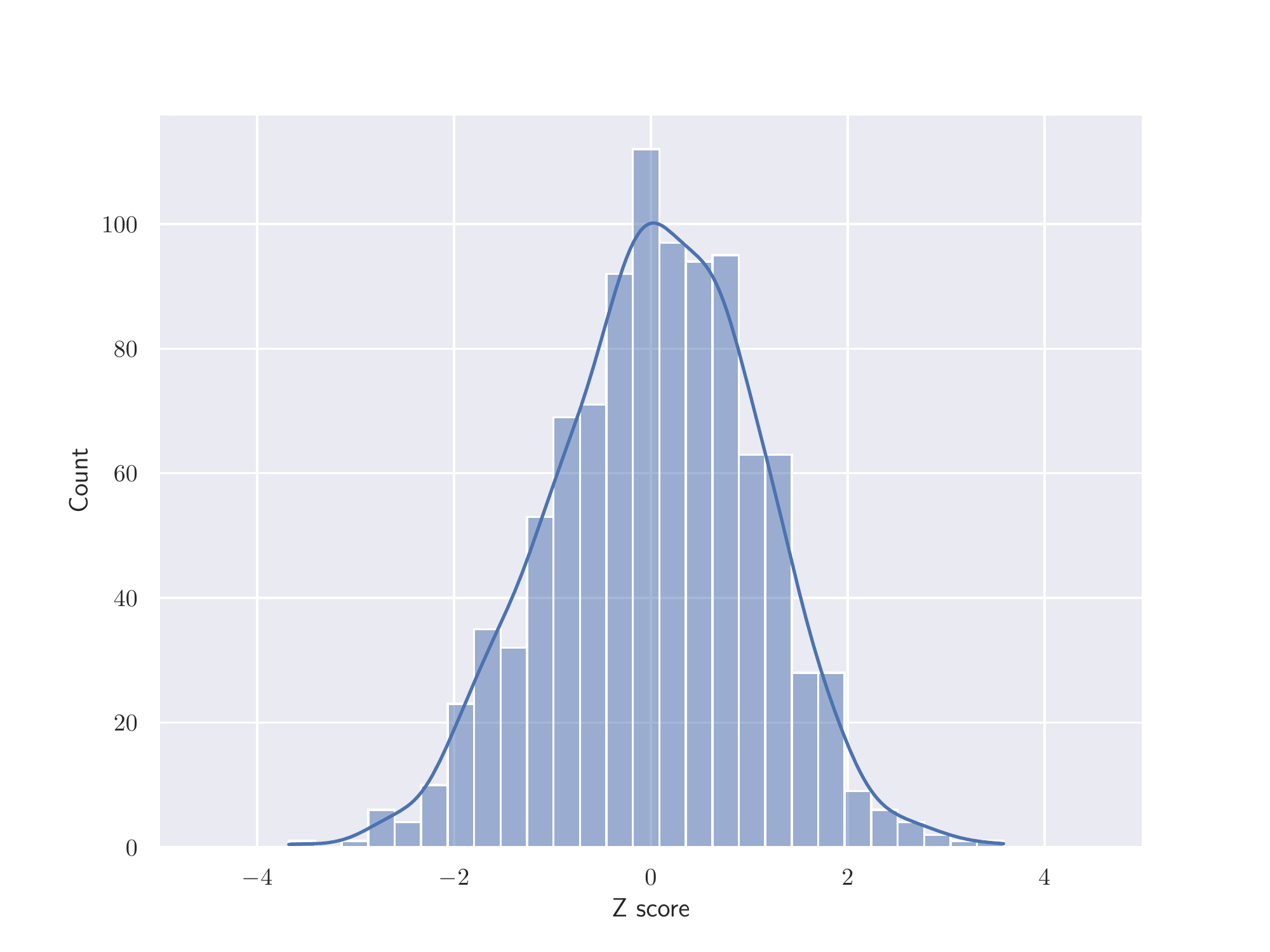}   &
\includegraphics[width=23mm]{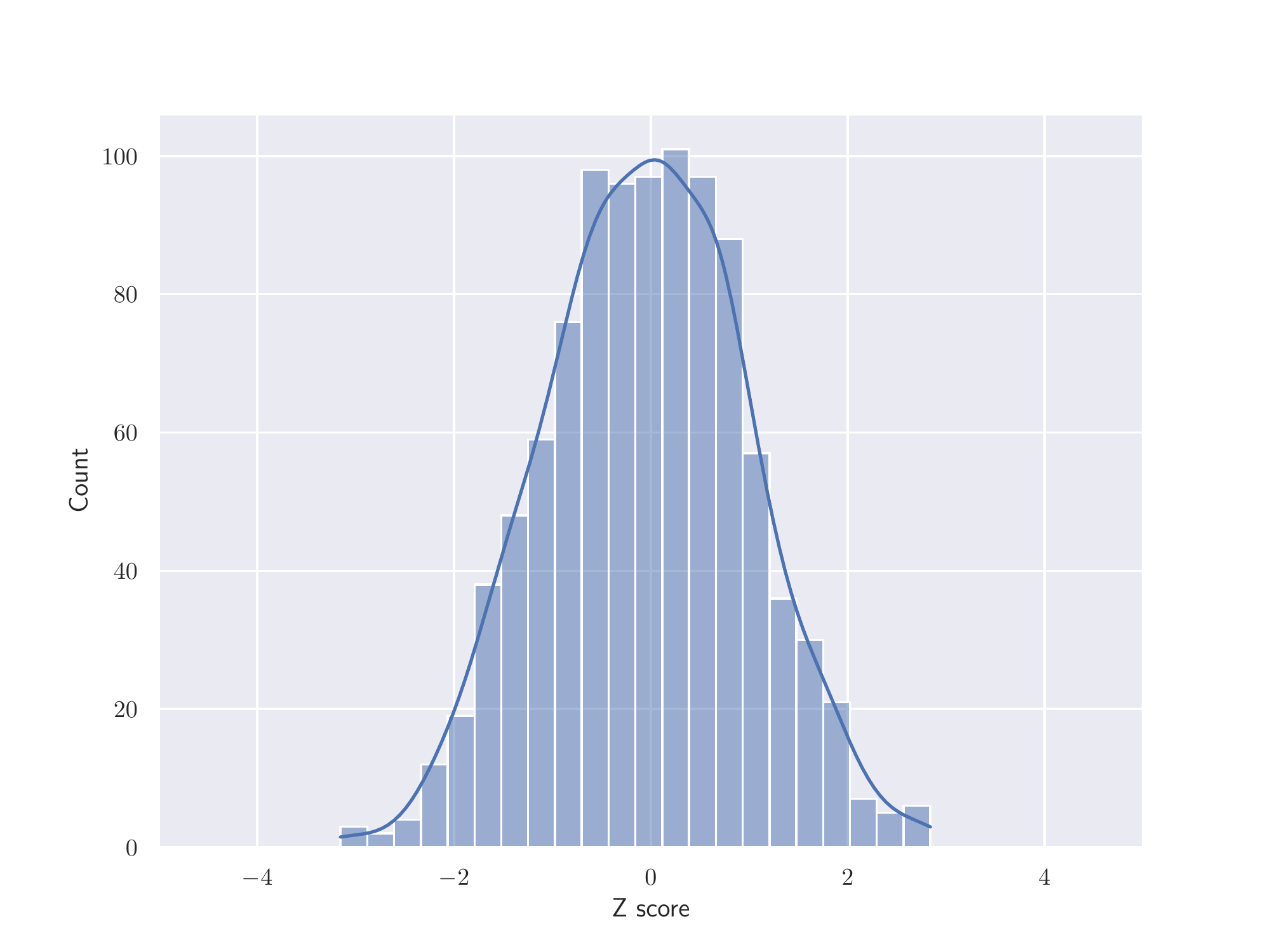} &
\includegraphics[width=23mm]{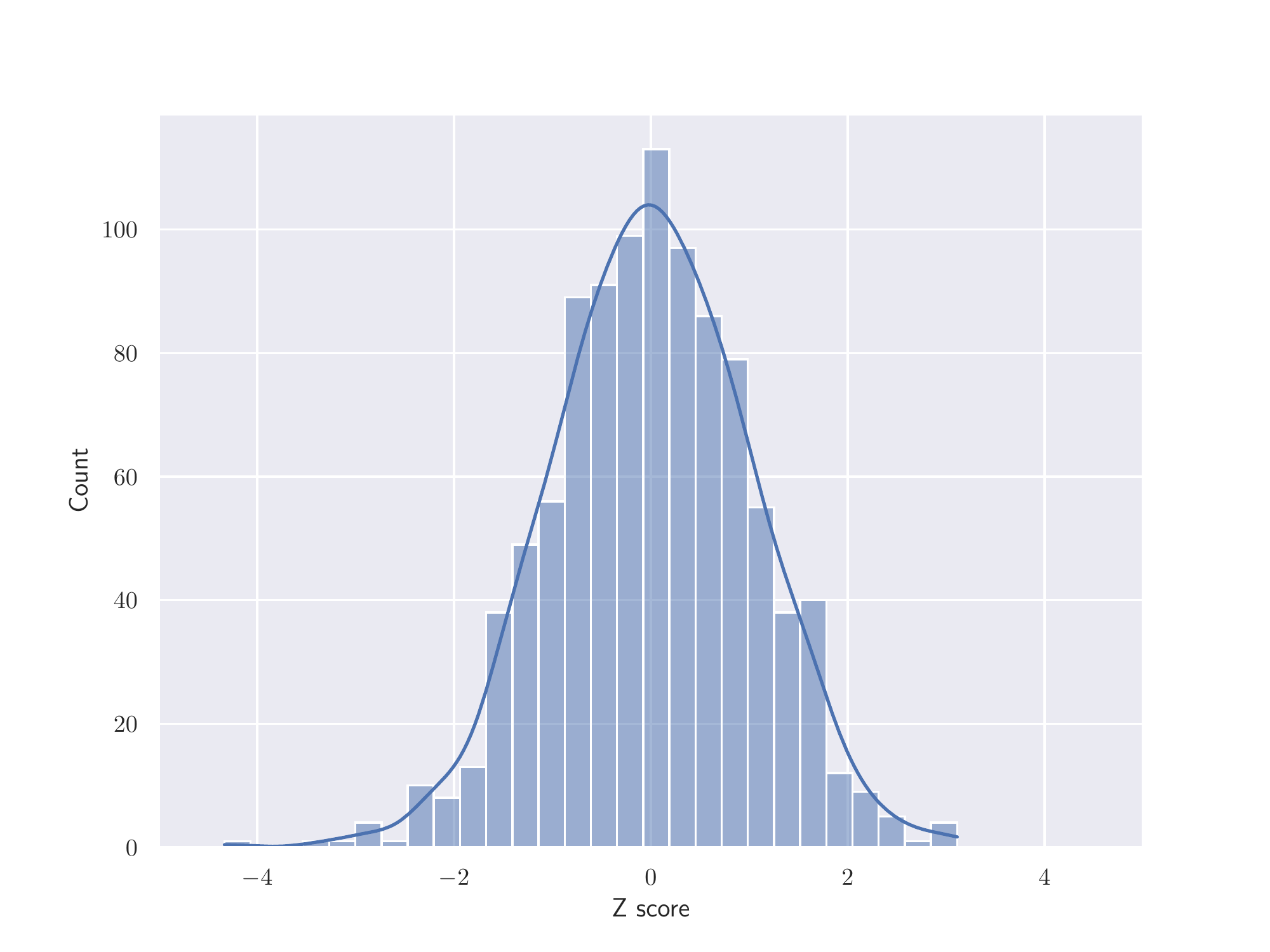} \\ &
\includegraphics[width=23mm]{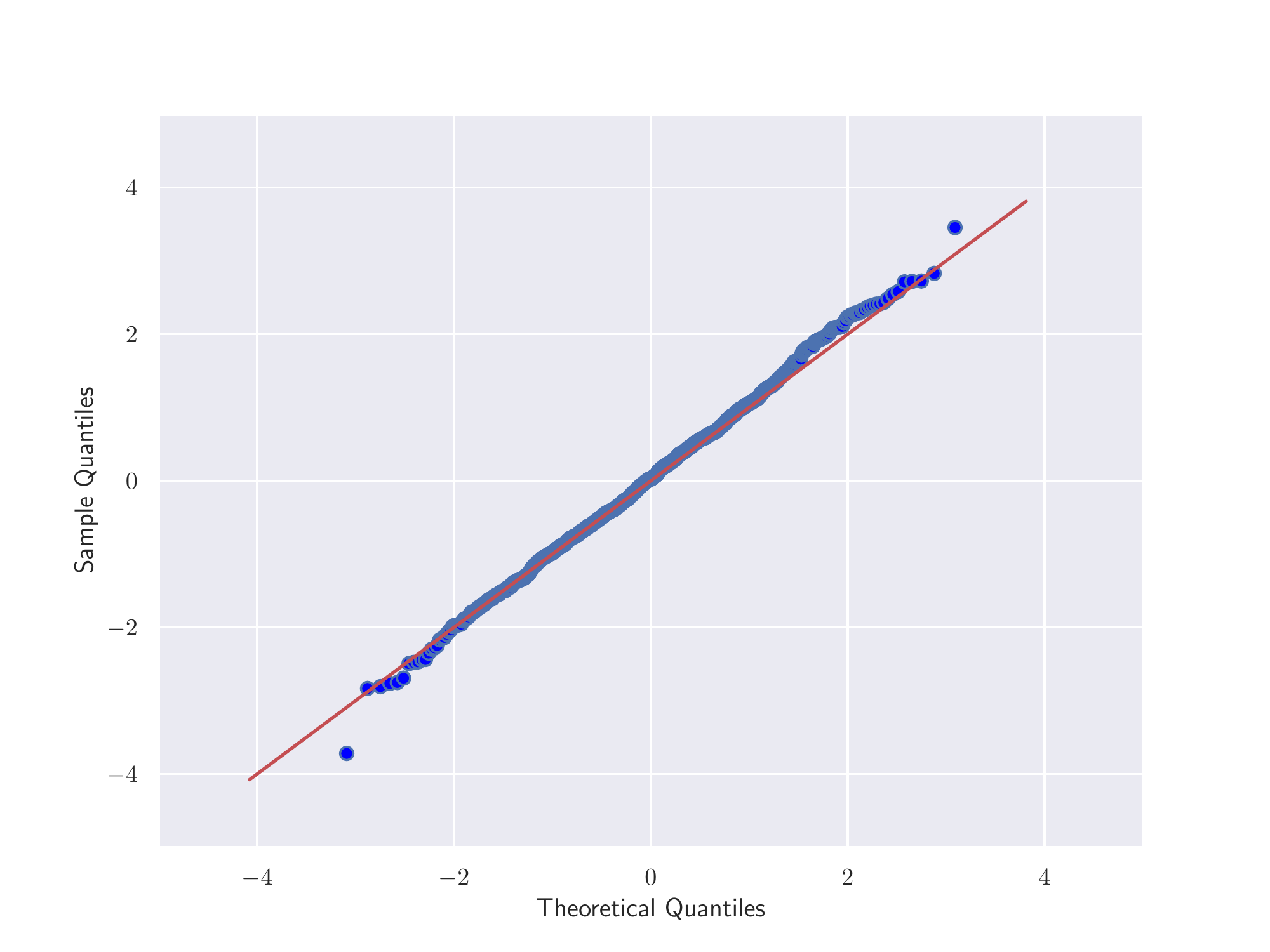}     &
\includegraphics[width=23mm]{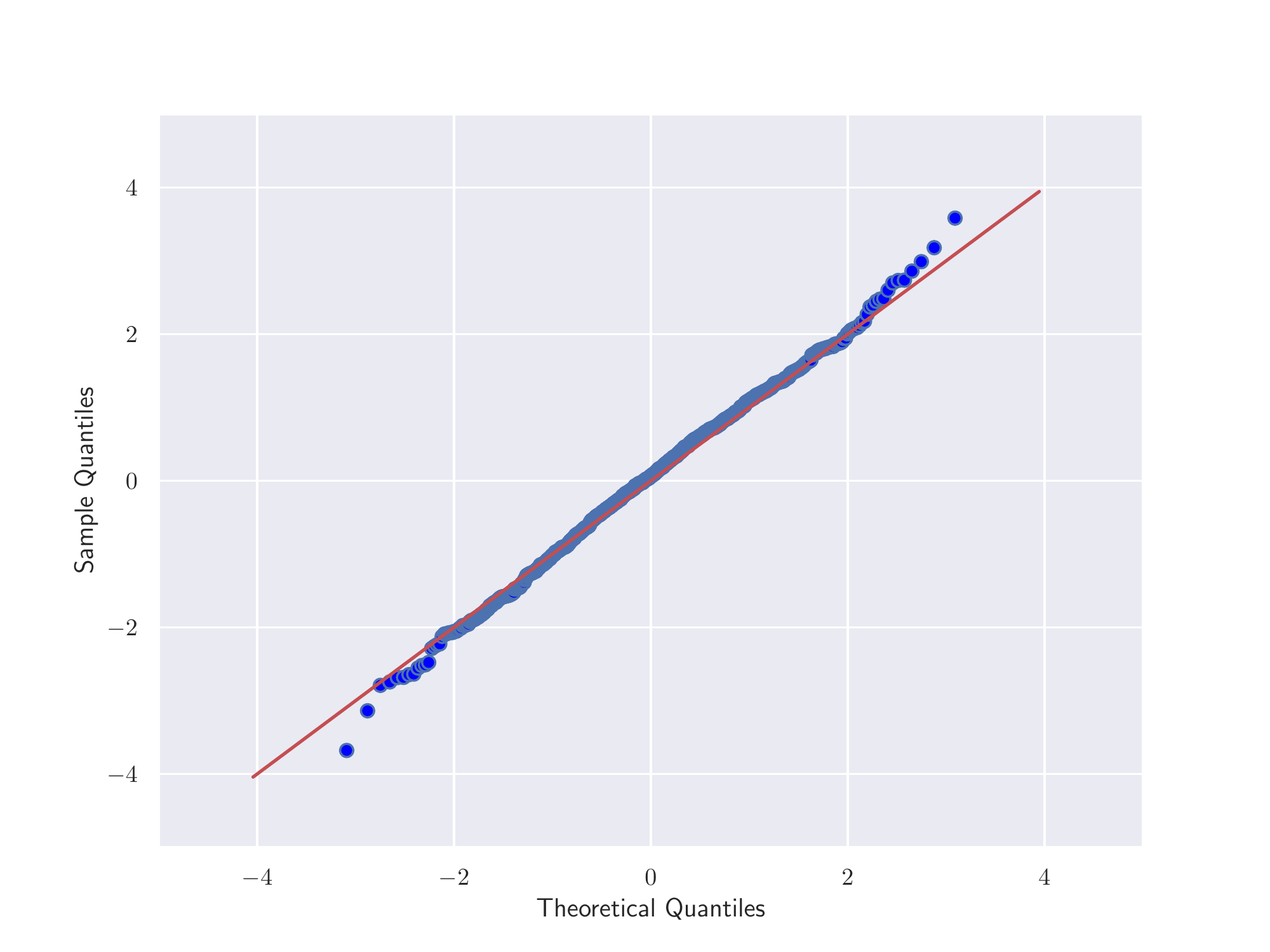}     &
\includegraphics[width=23mm]{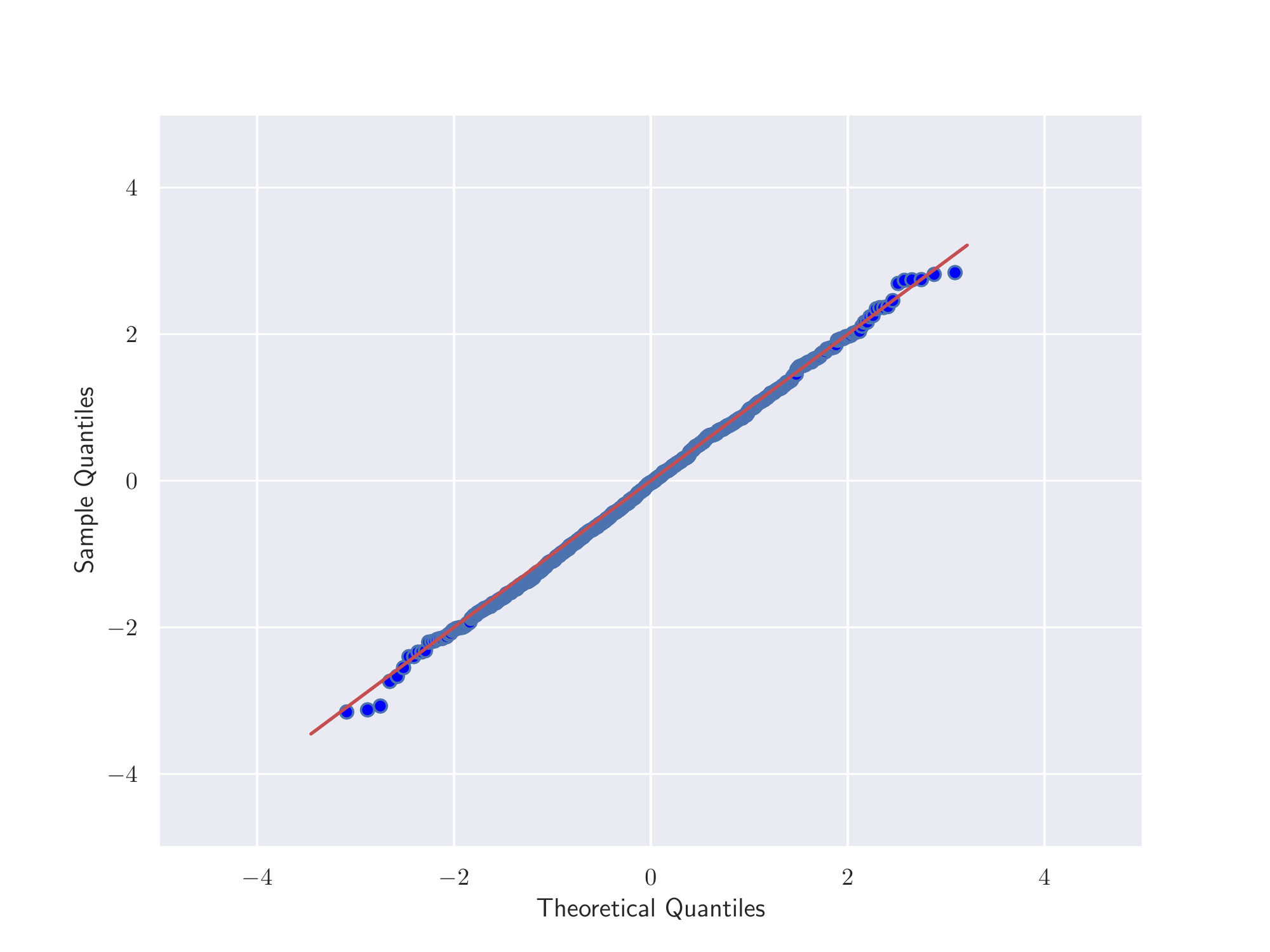}   &
\includegraphics[width=23mm]{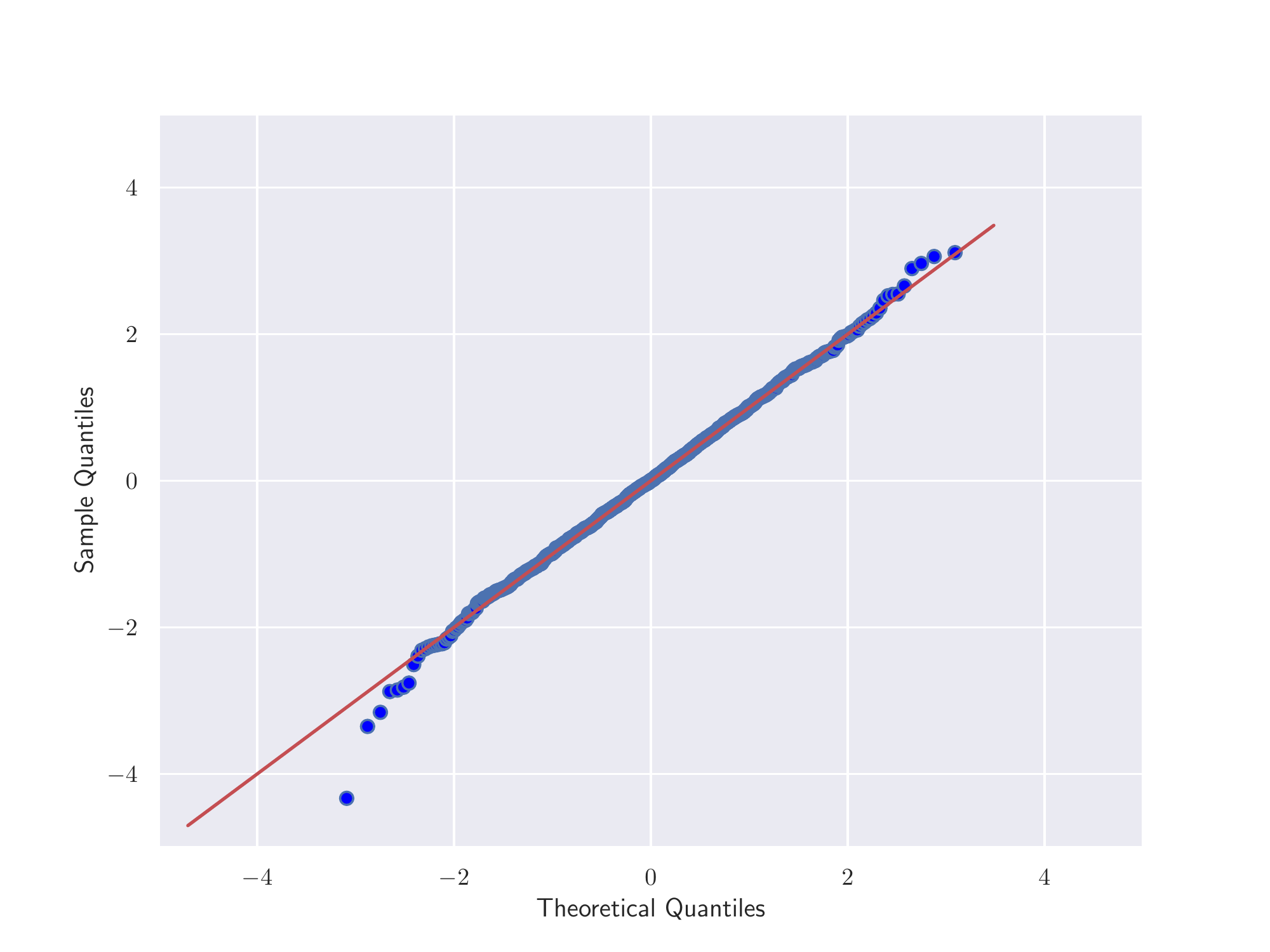}   \\ \bottomrule
\end{tabular}
\caption{Averages and standard errors over 1000 simulations
    of $(\hat n,\df,|\hat S|, \hat{V}^{1/2}n^{-1/2})$, 
    as well as histograms and QQ-plots for $\Omega_{jj}^{1/2}\xi_j'$
    given in \eqref{xi_j-prime}. 
    The coordinate $j$ is always $j=1$.
    The noise $\varepsilon_i$ is set as iid standard Cauchy.
    The red line on the QQ-plots is the diagonal line with equation $x=y$.
}
\label{table:simulation_cauchy}
\end{table}

\begin{table}[p]
    \centering
\begin{tabular}{@{}|c|c|c|c|c|@{}}
\toprule
$(\lambda,\tau)$           & $(n^{-1/2},0.1)$  & $(n^{-1/2},0)$ & $(2n^{-1/2},0.1)$ & $(2n^{-1/2},0)$ \\ \midrule
$ \hat{n} $                & $90.0 ~~\pm 5.8$ & $125.7 ~~\pm 6.4$ & $42.5 ~~\pm 4.8$ & $55.4 ~~\pm 5.7$ \\\midrule
$ \df $                    & $57.8 ~~\pm 3.8$ & $82.1 ~~\pm 4.9$  & $16.5 ~~\pm 2.4$ & $28.1 ~~\pm 4.3$ \\\midrule
$ |\hat S| $               & $97.8 ~~\pm 6.5$ & $82.1 ~~\pm 4.9$  & $29.7 ~~\pm 4.5$  & $28.1 ~~\pm 4.3$ \\\midrule
$ \sqrt{\hat{V} / n} $     & $0.37 ~~\pm 0.04$  & $0.25 ~~\pm 0.04$   & $0.51 ~~\pm 0.08$  & $0.48 ~~\pm 0.09$  \\\midrule
$\Omega_{jj}^{1/2}\xi_j'$ &
\includegraphics[width=23mm]{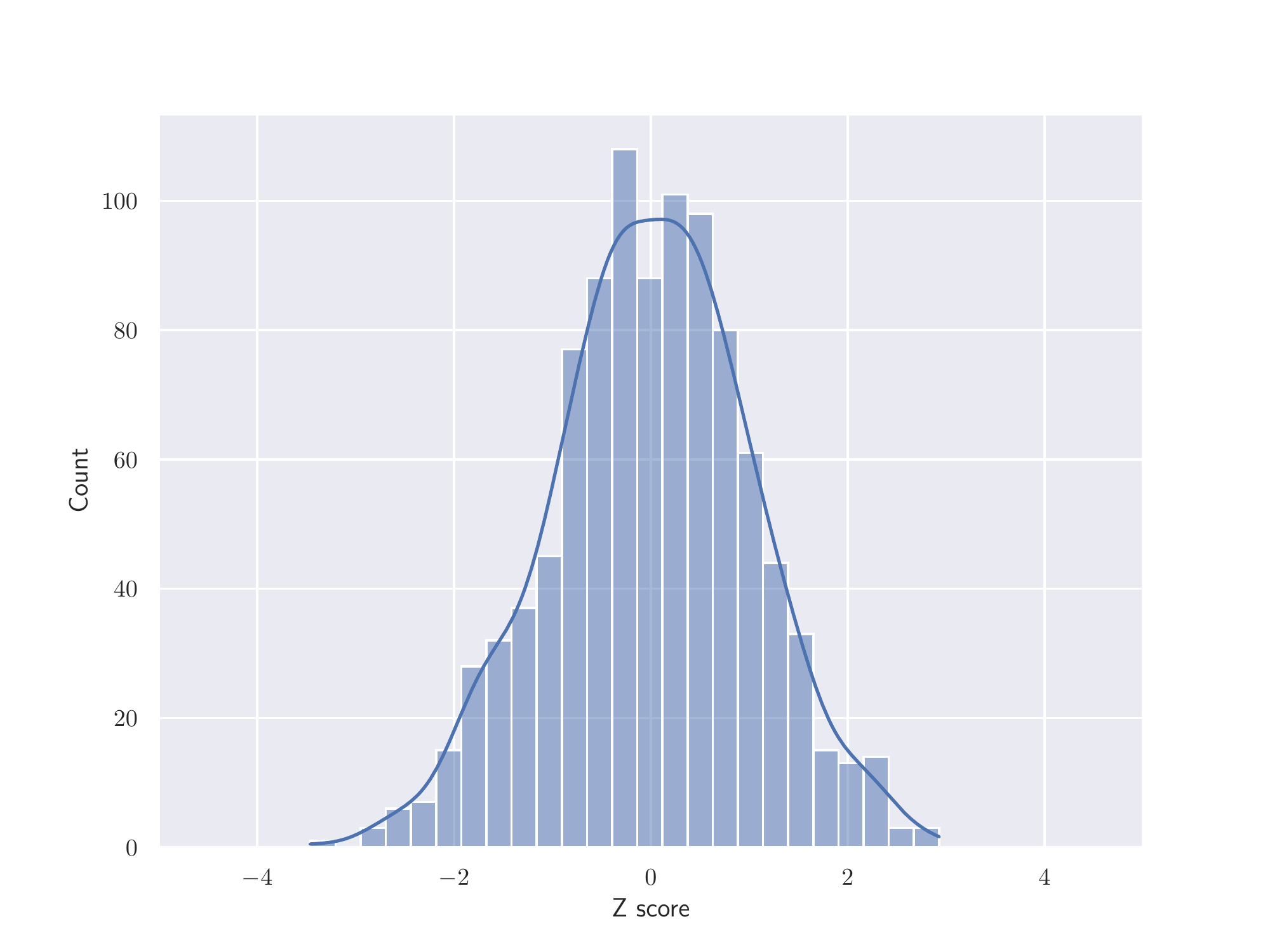}       &
\includegraphics[width=23mm]{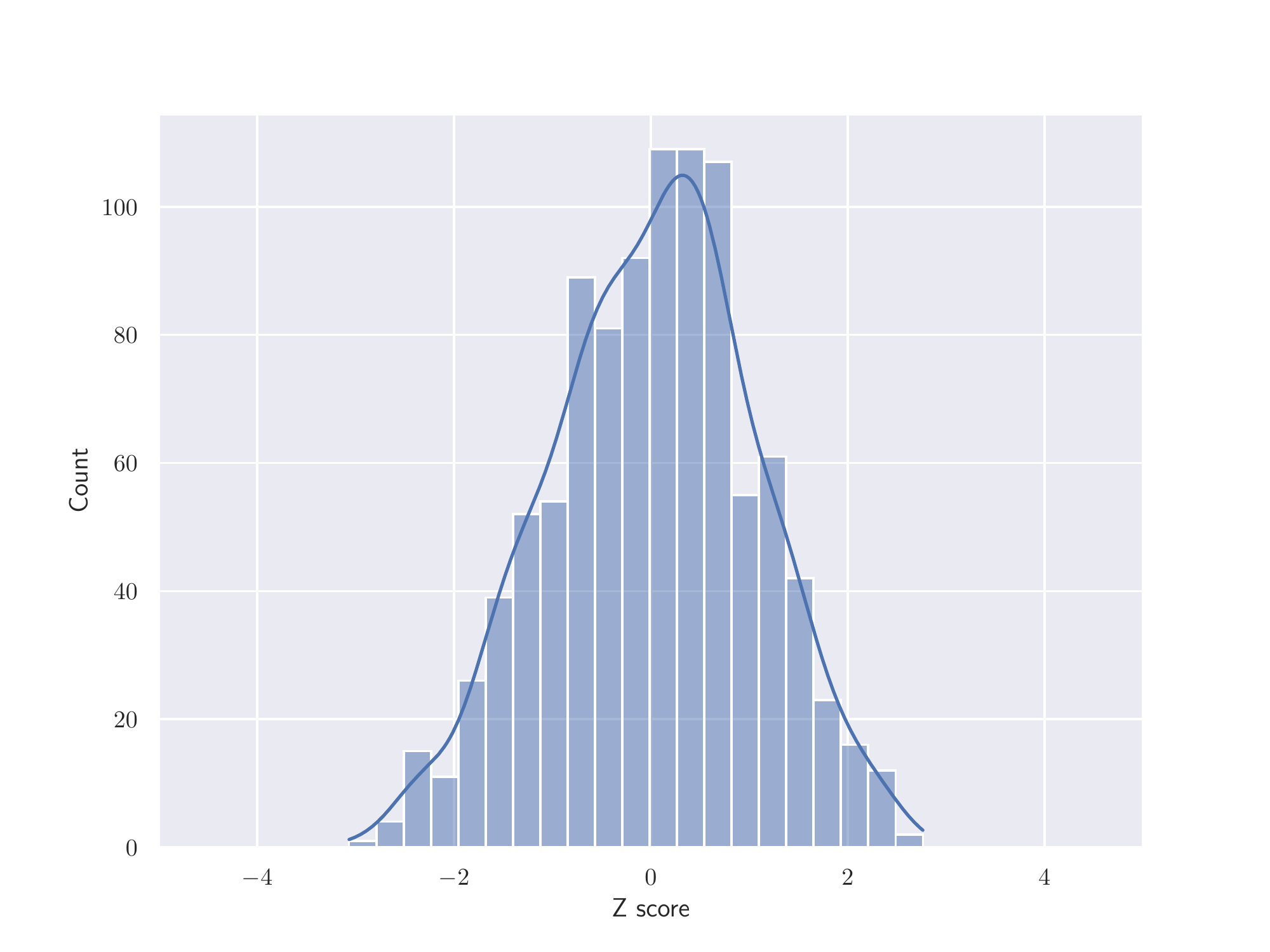}       &
\includegraphics[width=23mm]{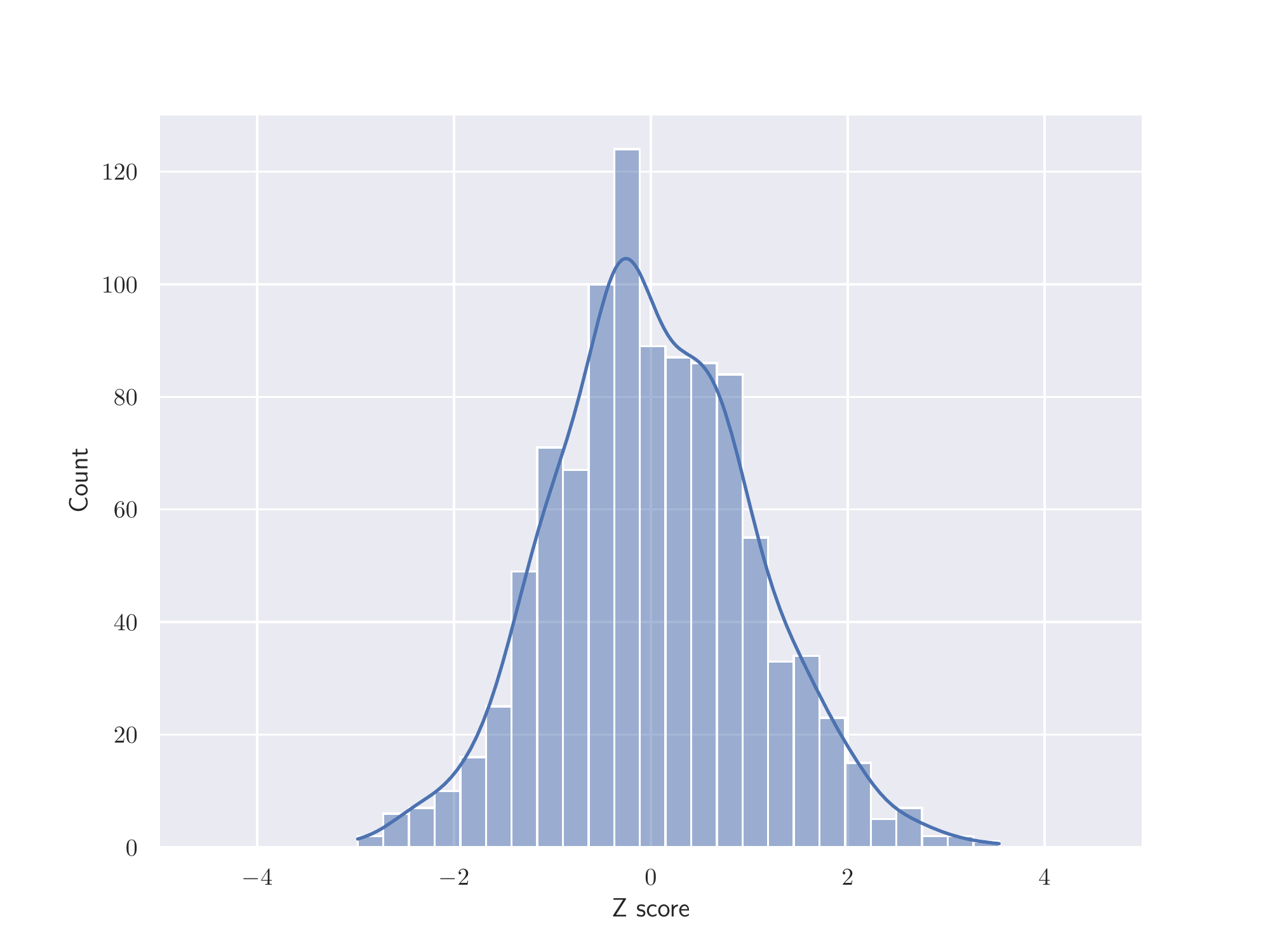}     &
\includegraphics[width=23mm]{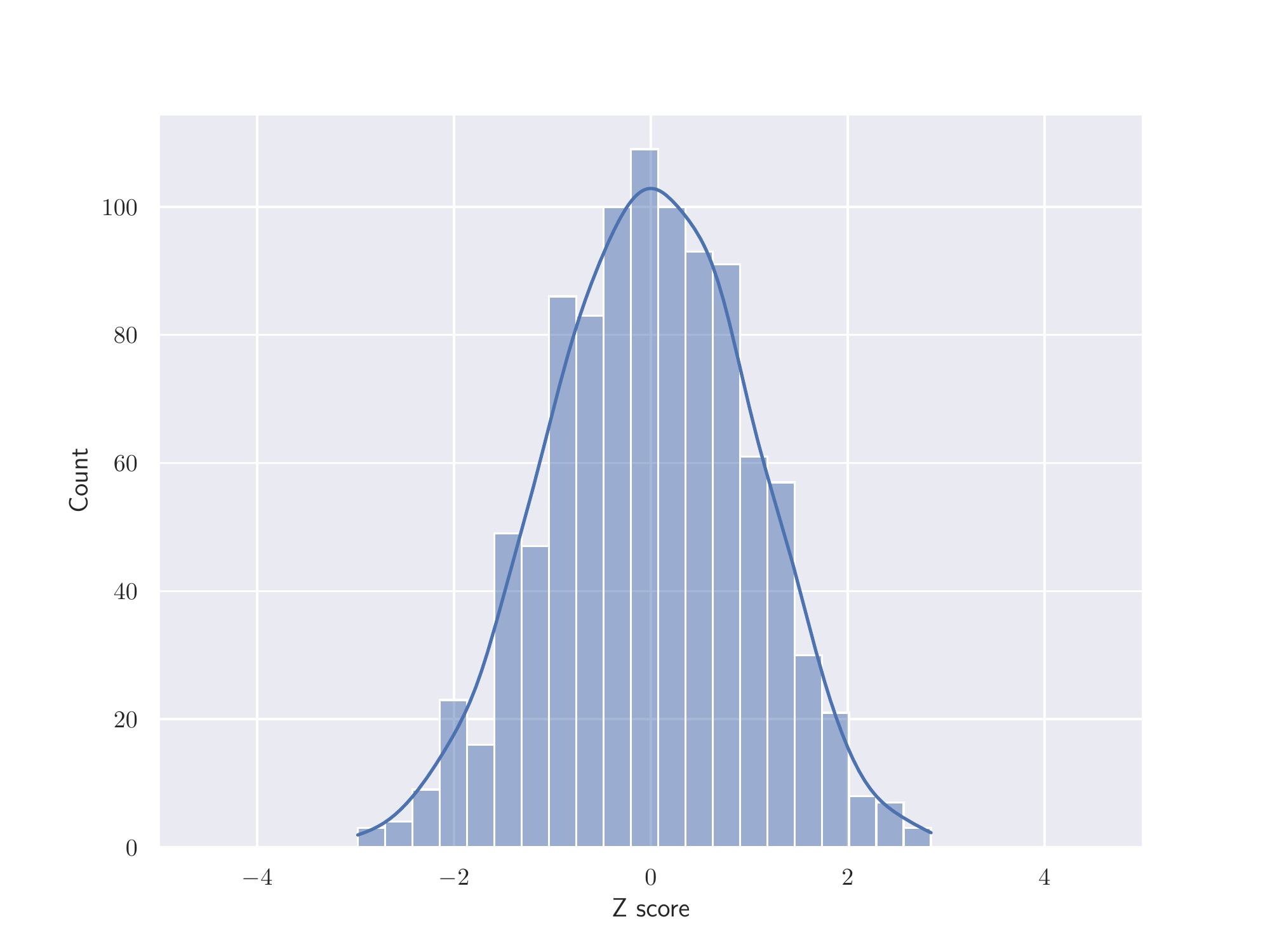} \\  &
\includegraphics[width=23mm]{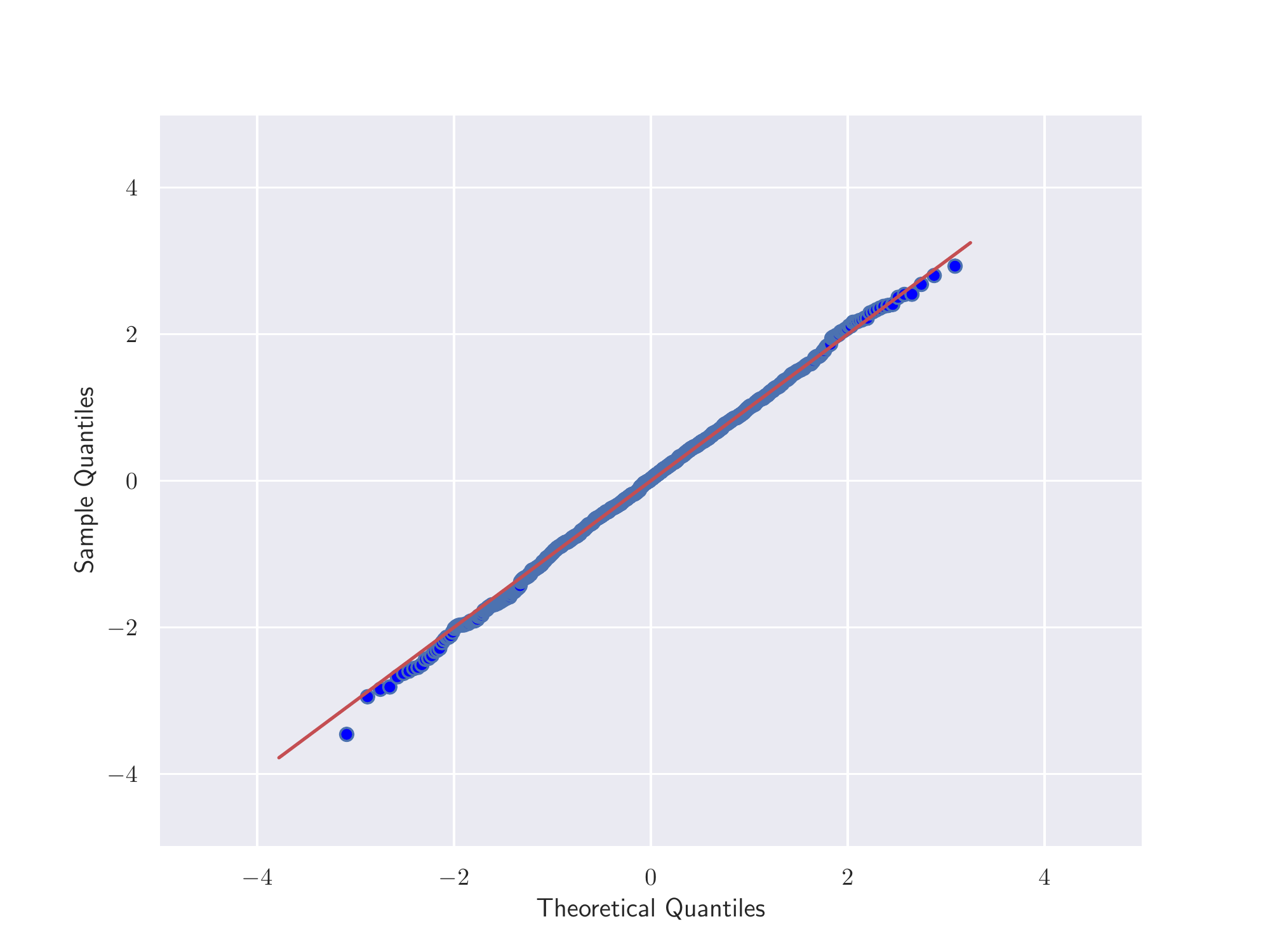}         &
\includegraphics[width=23mm]{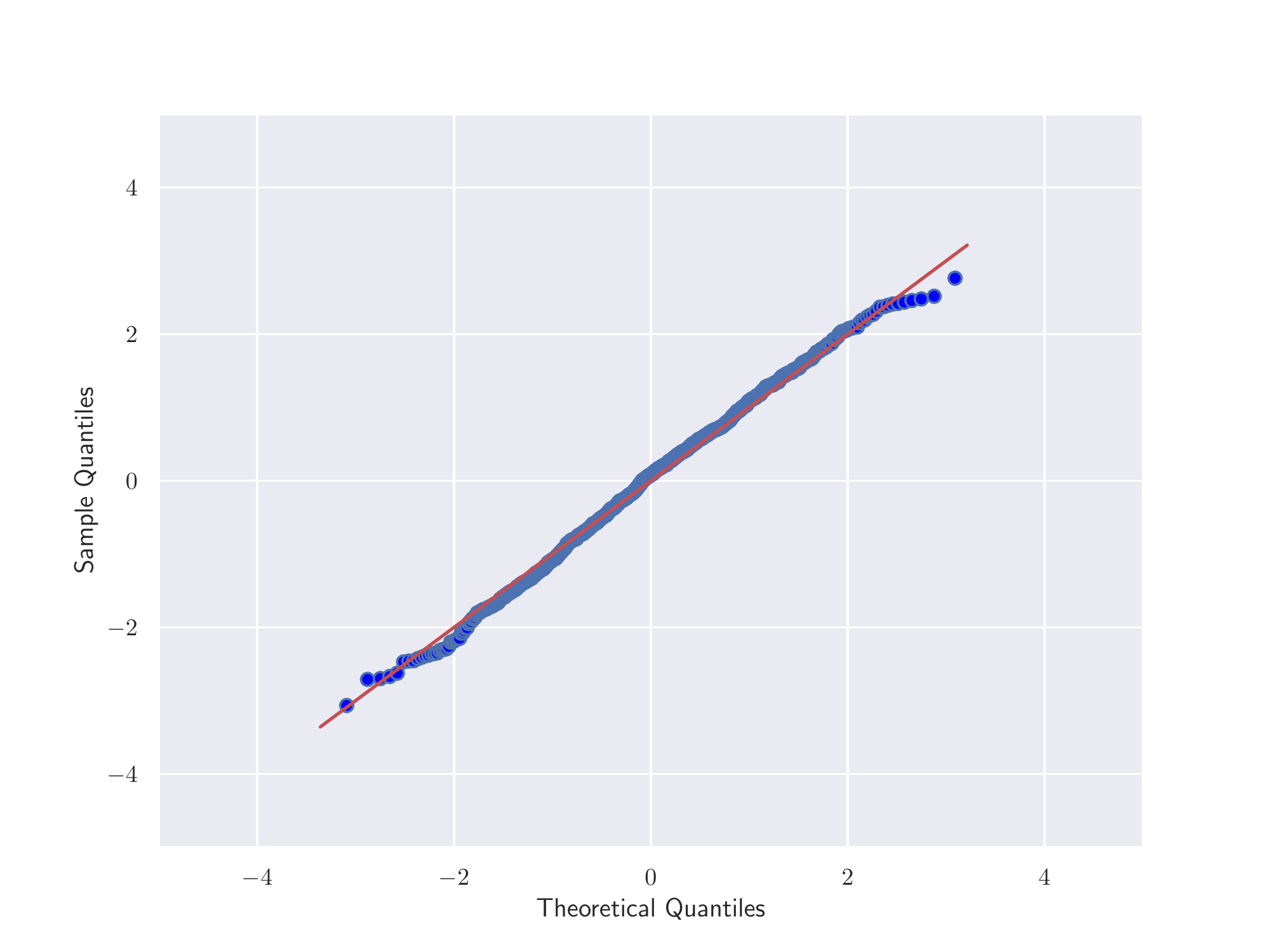}         &
\includegraphics[width=23mm]{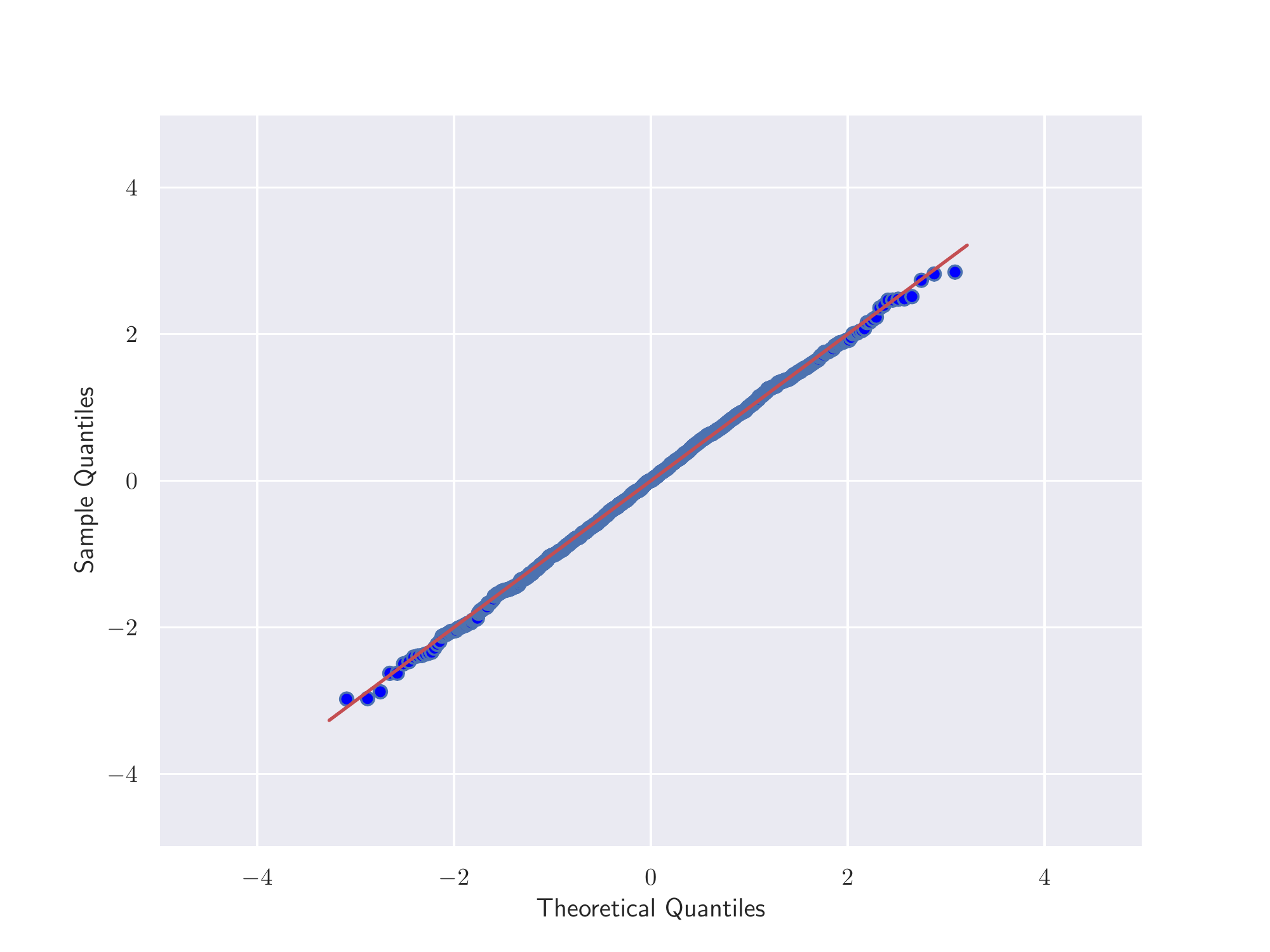}       &
\includegraphics[width=23mm]{qq_t2_0_2_n__-0.5.pdf}       \\ \bottomrule
\end{tabular}
\caption{
    Averages and standard errors over 1000 simulations
    of $(\hat n,\df,|\hat S|, \hat{V}^{1/2}n^{-1/2})$, 
    as well as histograms and QQ-plots for $\Omega_{jj}^{1/2}\xi_j'$
    given in \eqref{xi_j-prime}. 
    The coordinate $j$ is always $j=1$.
    The noise $\varepsilon_i$ is set as iid t-distribution with degree of freedom 2.
    The red line on the QQ-plots is the diagonal line with equation $x=y$.
}
\label{table:simulation_t2}
\end{table}

\begin{figure}[p]
    \centering
    \includegraphics[width=0.75\textwidth]{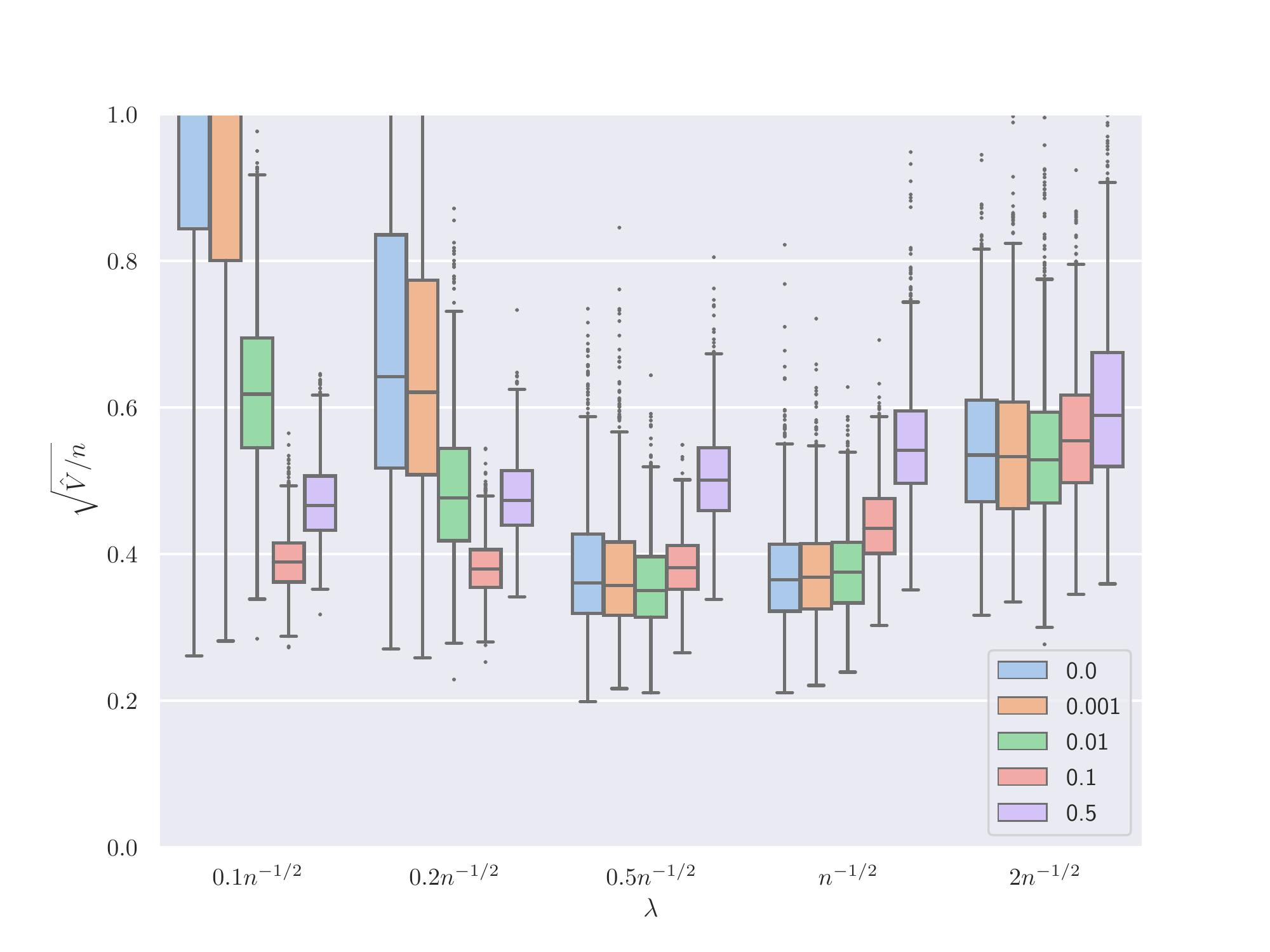}
    \includegraphics[width=0.75\textwidth]{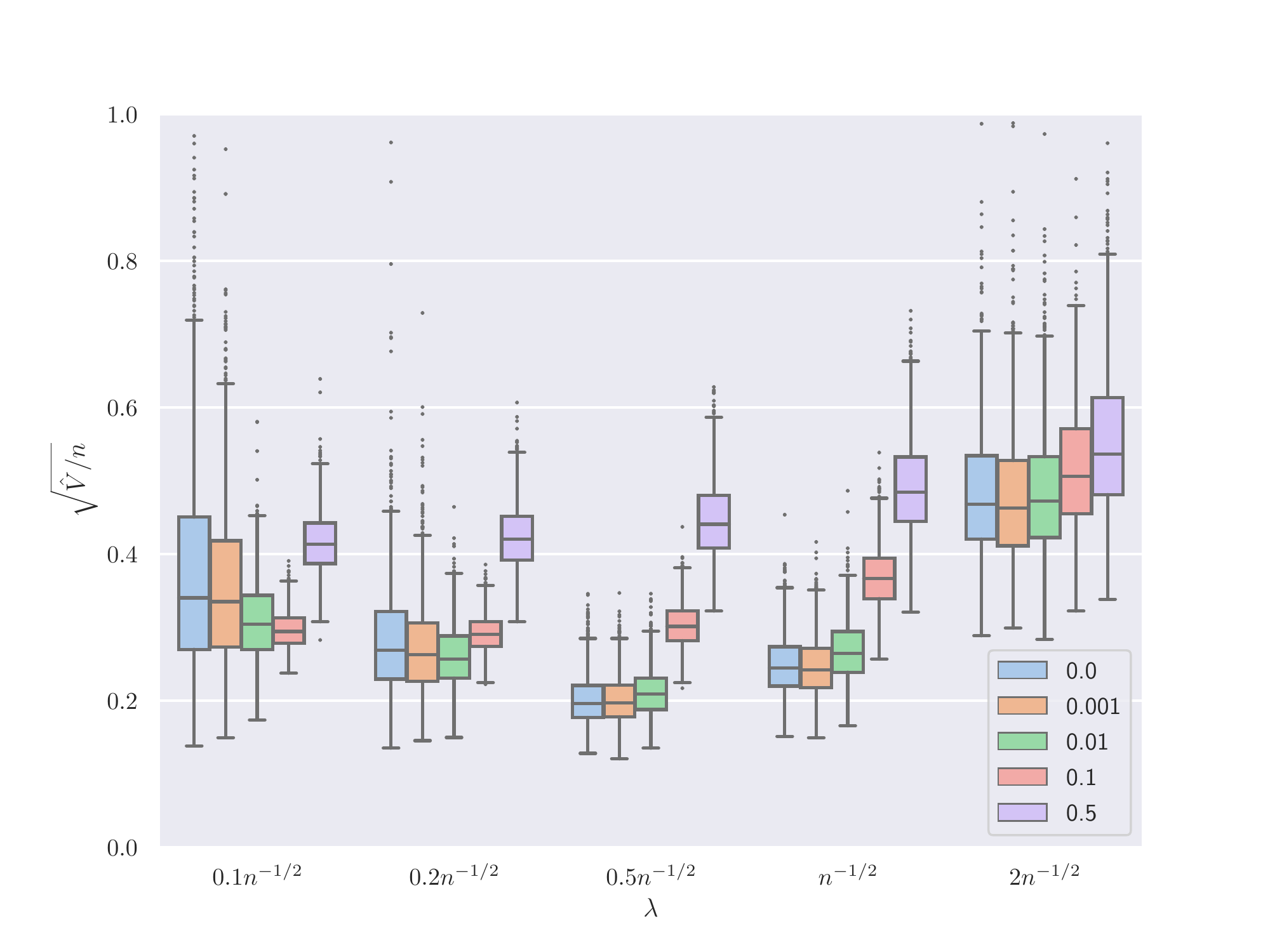}
    \caption{
        Boxplots of simulated $\hat{V}^{1/2} n^{-1/2}$.
        The simulation setup is described in \Cref{sec:simulation}.
        The top plot corresponds to iid standard Cauchy noise $\varepsilon_i$.
        The noise in the second plot is iid
        from the t-distribution with 2 degrees of freedom.
        Different colors correspond to different 
            values of  $\tau \in \{ 0, 10^{-3}, 10^{-2}, 0.1, 0.5 \}$.
            In the x-axis,
            $\lambda$ takes values in $\{0.1n^{-1/2}, 0.2n^{-1/2}, 0.5n^{-1/2}, n^{-1/2}, 2n^{-1/2} \}$.
    }
        \label{fig:boxplots}
\end{figure}

We set $n=200, p=300$, and generate $\veps_i\sim N(0,1)$ and the coordinates of
$\bbeta$ from iid Bernoulli variables with parameter $0.1$.
We compute 1000 simulations of the Z-score $\Omega_{jj}^{1/2}\xi_j'$
from \Cref{thm:main_thm}
for $j=1$ for the $M$-estimator with the Huber loss and the Elastic-Net
penalty \eqref{eq:huber-e-net} with $\sigma=1$
and the four combinations 
$(\lambda,\tau)\in\{n^{-1/2}, 2n^{-1/2}\} \times \{0, 0.1\}$.
The covariance matrix $\bSigma$ is set as $\bA^\top\bA/n$
where $\bA\in\R^{2p\times p}$ has iid Rademacher entries;
$\bSigma$ is generated once and is the same across the 1000 simulations.
The average value and standard error over the 1000 simulations
of $\hat n$, $\df$, $|\hat S|$ and $\hat{V}^{1/2} n^{-1/2}$ are presented in \Cref{table:simulation_cauchy} for $\bep$ with iid standard Cauchy components
and \Cref{table:simulation_t2} 
for $\bep$ with 
iid components from the t-distribution with $2$ degrees of freedom,
together with histograms and QQ-plots against standard normal quantiles
of $\Omega_{jj}^{1/2}\xi_j'$ in \eqref{eq:xi_j_prime-ENet-N01}.

The quantity $\hat{V}^{1/2} n ^{-1/2} = \| \bpsi \|_{2} / (\hat{n} - \df) $ 
featured in the boxplots of \Cref{fig:boxplots}
characterizes the length of our confidence intervals in 
\eqref{eq:normality}.
Computing the values $\hat V^{1/2}$ for different tuning parameters
lets the practitioner pick the tuning parameters leading to the
smallest confidence interval width, although this process
amounts to the construction of multiple confidence intervals
and warrants a Bonferroni multiple testing correction.

The histograms and QQ-plots 
in \Cref{table:simulation_cauchy} and \Cref{table:simulation_t2} 
confirm the normality of $\xi_j'$ for these two heavy-tailed continuous noise distributions.

Since $\hat n - \df$ appears in the denominator of $\hat V$,
the length of confidence intervals can be large 
if $\hat{n} - \df$ is nearly zero and the length is infinite
if $\hat n - \df = 0$.
This explains the large values and large variances
observed in the boxplots of \Cref{fig:boxplots}
for small tuning parameters.

\appendix

\section{Proof of the main result}

\subsection{Supporting propositions}

The proof of \Cref{thm:main_thm} relies on the two intermediary results
given below.
\Cref{prop:bound_ratio_lip} will be proved in \Cref{sec:stein}
and \Cref{lem:inter_step_zj} in \Cref{sec:bound-frobenius}.

\begin{restatable}{proposition}{propisitionSecondSteinCorollary} 
    \label[proposition]{prop:bound_ratio_lip}
	Let $n \geq 3$, $R>0$ and $\bz \sim \text{\rm Unif}(\mathbb{S}^{n-1}(R))$, 
        where $\mathbb{S}^{n-1}(R)$ is sphere of radius $R$ in $\R^n$.
        Assume either:
        \begin{itemize}
            \item
	    $\bff$ is locally Lipschitz on $\mathbb{S}^{n-1}(R)$ with $\E [\| \nabla \bff (\bz) ^\top\|_{F}^{2}]<\infty$.
            \item
	$\bff (\bz)$ is of the form $\tbf (\bz) / \| \tbf (\bz) \|_{2} $ where $\tbf:\R^{n} \to \R^{n}$ is locally Lipschitz and
        satisfies
        $\P(\|\bff(\bz)\|_2 \ne 0)=1$ and
        $\E [ \| \tbf (\bz) \|_{2}^{-2} \| \nabla \tbf (\bz) \|_{F}^{2} ] < + \infty $. 
        \end{itemize}
	Define $\bP_{\bz}^\perp = \bI_n - \bz \bz^{\top} / \| \bz \|_2^2$
and $\xi_{\bff}(\bz) = \bff (\bz) ^\top \bz - R^2 n^{-1} \trace ( \nabla \bff (\bz)^\top \bP_{\bz}^{\perp})$. 
	Then 
\begin{align} \label{eq:inequality_1}
			\E 
			\bigl[ 
			\bigl(
			\xi_{\bff}(\bz)
			- 
			\E [ \bff (\bz) ]^{\top} \bz
			\bigr)^{2}
			\bigr]
		& \leq 
			2 R^4 (n^2 - 2n)^{-1} \E [\| \nabla \bff (\bz) ^\top \bP_{\bz}^{\perp} \|_{F}^{2}],
\\
	\E \bigl[ \big| \| \bff (\bz) \|_{2} - \| \E [ \bff (\bz) ] \|_{2} \big| \bigr]^{2}
& \leq
	R^2 (n-2)^{-1} \E [ \| \nabla \bff (\bz)^\top \bP_{\bz}^{\perp} \|_{F}^{2} ]
.
\label{eq:inequality_2}
\end{align}

\end{restatable}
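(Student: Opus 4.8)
The plan is to transport the problem to Gaussian space through the polar representation of the sphere and to read both bounds off a single exact second-moment identity. Let $\bg\sim N(\bzero,\bI_n)$, put $r=\|\bg\|_2$ and $\bz=R\bg/r$; then $\bz\sim\mathrm{Unif}(\mathbb{S}^{n-1}(R))$ is independent of $r$, with $r^2\sim\chi^2_n$ and, since $n\ge3$, the moments $\E[r^2]=n$ and $\E[r^{-2}]=(n-2)^{-1}$. For the first inequality I first reduce to the centered case: because $\xi_{\bff}(\bz)-\E[\bff(\bz)]^\top\bz=\xi_{\bff-\E[\bff(\bz)]}(\bz)$ and the right-hand side of \eqref{eq:inequality_1} is unchanged when $\bff$ is replaced by $\bff-\E[\bff(\bz)]$, I may assume $\E[\bff(\bz)]=\bzero$, so that $\E[\|\bff(\bz)\|_2^2]=\sum_i\mathrm{Var}(f_i)$. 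The normalized second case $\bff=\tbf/\|\tbf\|_2$ is absorbed into the locally Lipschitz case: on $\{\tbf\ne\bzero\}$ (a set of full measure, since $\P(\|\bff(\bz)\|_2\ne0)=1$) the map $\bff$ is locally Lipschitz, and the hypothesis $\E[\|\tbf\|_2^{-2}\|\nabla\tbf\|_F^2]<\infty$ guarantees $\E[\|\nabla\bff(\bz)^\top\bP_{\bz}^\perp\|_F^2]<\infty$.

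The engine is the Gaussian divergence operator $\delta(\bF)=\bg^\top\bF-\operatorname{div}\bF$ and its isometry $\E[\delta(\bF)^2]=\E[\|\bF\|_2^2]+\E[\sum_{i,j}\partial_iF_j\,\partial_jF_i]$. I apply it to the $0$-homogeneous extension $\bF(\bg)=\bff(R\bg/\|\bg\|_2)=\bff(\bz)$. From $\partial z_k/\partial g_j=(R/r)(\bP_{\bz}^\perp)_{kj}$ the chain rule gives $\bg^\top\bF=(r/R)\,\bff(\bz)^\top\bz$, $\operatorname{div}\bF=(R/r)\trace(\nabla\bff(\bz)^\top\bP_{\bz}^\perp)$ and $\sum_{i,j}\partial_iF_j\,\partial_jF_i=(R/r)^2\trace((\nabla\bff(\bz)^\top\bP_{\bz}^\perp)^2)$; note that the projection $\bP_{\bz}^\perp$ appears automatically, so only tangential derivatives of $\bff$ enter. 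Writing $\phi=\bff(\bz)^\top\bz$ and $T=\trace(\nabla\bff(\bz)^\top\bP_{\bz}^\perp)$, we have $\delta(\bF)=(r/R)\phi-(R/r)T$, and inserting this into the isometry while using the independence of $r$ and $\bz$ together with the two $\chi^2_n$ moments yields the exact identity
\[
\tfrac{n}{R^2}\E[\phi^2]-2\E[\phi T]+\tfrac{R^2}{n-2}\E[T^2]
=\E[\|\bff(\bz)\|_2^2]+\tfrac{R^2}{n-2}\E\bigl[\trace\bigl((\nabla\bff(\bz)^\top\bP_{\bz}^\perp)^2\bigr)\bigr].
\]

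Since $\xi_{\bff}=\phi-(R^2/n)T$, I solve this identity for $\E[\phi^2]$ and substitute into $\E[\xi_{\bff}^2]=\E[\phi^2]-\tfrac{2R^2}{n}\E[\phi T]+\tfrac{R^4}{n^2}\E[T^2]$. The decisive feature is that the indefinite cross term $\E[\phi T]$ cancels, leaving
\[
\E[\xi_{\bff}(\bz)^2]=\tfrac{R^2}{n}\E[\|\bff(\bz)\|_2^2]+\tfrac{R^4}{n(n-2)}\E\bigl[\trace\bigl((\nabla\bff(\bz)^\top\bP_{\bz}^\perp)^2\bigr)\bigr]-\tfrac{2R^4}{n^2(n-2)}\E[T^2].
\]
Dropping the last nonpositive term, using $\trace(A^2)\le\|A\|_F^2$ with $A=\nabla\bff(\bz)^\top\bP_{\bz}^\perp$, and controlling $\E[\|\bff(\bz)\|_2^2]=\sum_i\mathrm{Var}(f_i)$ by the spherical Poincaré inequality $\mathrm{Var}(h)\le\frac{R^2}{n-2}\E[\|\bP_{\bz}^\perp\nabla h\|_2^2]$ (itself obtained by applying the Gaussian Poincaré inequality to the $0$-homogeneous extension of $h$, which is precisely why the constant is $n-2$ rather than the sharp $n-1$), I get $\E[\xi_{\bff}^2]\le(\tfrac{R^2}{n}\tfrac{R^2}{n-2}+\tfrac{R^4}{n(n-2)})\E[\|\nabla\bff(\bz)^\top\bP_{\bz}^\perp\|_F^2]=\tfrac{2R^4}{n(n-2)}\E[\|\nabla\bff(\bz)^\top\bP_{\bz}^\perp\|_F^2]$, which is \eqref{eq:inequality_1}. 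Inequality \eqref{eq:inequality_2} is then immediate and needs no centering: by the reverse triangle inequality and Jensen, $\E[\,|\,\|\bff(\bz)\|_2-\|\E[\bff(\bz)]\|_2\,|\,]^2\le\E[\|\bff(\bz)-\E[\bff(\bz)]\|_2^2]=\sum_i\mathrm{Var}(f_i)$, and the same coordinatewise spherical Poincaré bound gives $\le\frac{R^2}{n-2}\E[\|\nabla\bff(\bz)^\top\bP_{\bz}^\perp\|_F^2]$.

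The main obstacle is not the algebra but the analytic justification of the Gaussian integration by parts and the isometry for the extension $\bF$, which is only locally Lipschitz (hence differentiable merely almost everywhere) and, being $0$-homogeneous, singular at the origin. I would handle this by smooth approximation combined with the uniform integrability supplied by $\E[\|\nabla\bff(\bz)^\top\bP_{\bz}^\perp\|_F^2]<\infty$ and $\E[r^{-2}]=(n-2)^{-1}<\infty$, the latter forcing the standing hypothesis $n\ge3$; in the normalized case the additional point is to establish square-integrability of the gradient of $\tbf/\|\tbf\|_2$ on $\{\tbf\ne\bzero\}$, which is exactly what $\E[\|\tbf\|_2^{-2}\|\nabla\tbf\|_F^2]<\infty$ provides, so that $\bF$ indeed lies in the domain of the divergence operator.
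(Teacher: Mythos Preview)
Your proof is correct and takes essentially the same approach as the paper. Both arguments lift to a standard Gaussian via $\bz=R\bg/\|\bg\|_2$, apply the second-order Gaussian Stein identity (what you call the divergence isometry, what the paper cites as the Second Order Stein formula of \cite{BZ18}) to the $0$-homogeneous extension, exploit independence of $\|\bg\|_2$ and $\bz$ together with $\E[\|\bg\|_2^{-2}]=(n-2)^{-1}$ to obtain the same exact identity, and then combine with the spherical Poincar\'e inequality (derived from the Gaussian one) after centering; the paper packages the identity and Poincar\'e as a separate lemma and handles the singularity at the origin via an explicit radial cutoff $\phi(\|\bx\|_2/\delta)$ and the normalized case via $\tbf/(\delta\vee\|\tbf\|_2)$, while you sketch the same approximation step more briefly, but the mathematical content is identical.
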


\begin{restatable}{lemma}{lemmaBoundFrobenius} \label[lemma]{lem:inter_step_zj}
    Let Assumptions~\ref{as:rho}, \ref{as:g} and \ref{as:feature} be fulfilled.
Let $\eta_{n} = \sqrt{2 \log (n) / n} + n^{-1/2}$,  
\begin{align*}
			\mathscr{E}_{j} 
		= 
			\{	\| \bX \bQ_{j} \bSigma^{-1/2} n^{-1/2} \|_{\op} 
			\leq 
				1 + \sqrt{ p / n } + \eta_{n}	
			\} 
		\cap 
			\{  \big| n^{-1/2} \Omega_{jj}^{1/2} \| \bz_{j} \|_{2} - 1 \big|
			\leq
				\eta_{n}
			\},\ j\in [p], 
	\end{align*}
	$u_{*}=\valueOfUstar$, and 
$\E_{j}$ be 
the conditional expectation
given $(\| \bz_{j} \|_{2},\bX \bQ_{j},\bep)$. 
Then, when $ u_* > 0$
	\begin{equation} \label{eq:5221} 
			\sup_{\delta>0} 
			\E\Bigl[
                            \frac{n}{\|\bpsi\|_{2}^{2} +\delta}
				\sum_{j\in[p]}I_{\mathscr{E}_{j}}h_{j}^{2}
                            \Bigr]^{1/2}
		\leq 
			\frac{
                            \bigl[(1+\sqrt{\frac p n})^2+\frac1n\bigr]^{1/2}
                        }{\phi_{\min}(\bSigma)^{1/2}( 1 - \eta_{n} )_{+}^{2}  u_*}
		+
                \frac{\bigl[\frac{2p}{n\tau} + \E[\|\bh\|_2^2]\bigr]^{1/2}}{u_*^{1/2}}. 
	\end{equation} 
\end{restatable}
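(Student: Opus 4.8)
The plan is to derive, for each coordinate $j$, a debiasing identity that isolates $h_j$, to lower bound the curvature multiplying $h_j$, and then to sum over $j$ and solve a scalar quadratic inequality. The identity is the defining relation behind $\xi_j$ in \eqref{xi_j}: probing \eqref{eq:bbeta} in the direction $\bu_j=\Omega_{jj}^{-1}\bSigma^{-1}\be_j$ (which obeys $\bX\bu_j=\bz_j$ and $\be_j^\top\bu_j=1$), expanding $\bpsi=\psi(\bep-\bX\bh)$ around $\bep$ with a mean-value diagonal, and using the stationarity condition $\tfrac1n\bX^\top\bpsi\in\partial g(\hbbeta)$, one obtains
\[
  \tfrac{\|\bz_j\|_2^2}{n}\,\trace[\nabla_{\by}\bpsi]\,h_j + \bz_j^\top\bpsi = \|\bpsi\|_2\,\xi_j ,
\]
in which $\bz_j^\top\bpsi$ is the score for $\beta_j$, the coefficient of $h_j$ is the effective degrees of freedom $\trace[\nabla_{\by}\bpsi]$ rescaled by $\|\bz_j\|_2^2/n\approx\Omega_{jj}^{-1}$, and $\|\bpsi\|_2\xi_j$ plays the role of the fluctuation (this is the robust analogue of the square-loss relation behind \eqref{eq:intro-adjustment-asymptotic-normality-df-square-loss}).

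The heart of the argument is a lower bound on the curvature $\trace[\nabla_{\by}\bpsi]$ in terms of $nu_*$, which must be produced despite $\rho$ being only convex. By the chain rule $\nabla_{\by}\bpsi=\diag(\bpsi')(\bI_n-\bX\,\partial_{\by}\hbbeta)$, so $\trace[\nabla_{\by}\bpsi]$ is a Schur complement of $\tfrac1n\bX^\top\diag(\bpsi')\bX$ regularized by the $\tau$-strong convexity of $g$. The operator-norm bound $\|\bX\bQ_j\bSigma^{-1/2}/\sqrt n\|_{\op}\le 1+\sqrt{p/n}+\eta_n$ valid on $\mathscr{E}_j$, together with $\diag(\bpsi')\preceq L\bI_n$, controls the Schur correction by the factor $(L\tau^{-1}\|\bSigma\|_{\op}(2+\sqrt{p/n}+2\eta_n)^2+1)^{-1}$, while Assumption~\ref{as:rho} in the form $\psi'\ge K^2-\psi^2$ supplies the curvature $K^2$ on the set where $\psi$ is moderate. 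The delicate point is that Assumption~\ref{as:rho} furnishes this curvature only up to a defect of order $\|\bpsi\|_2^2$ coming from the heavy-tailed residuals; it is exactly this defect that forces the quantity in \eqref{eq:5221} to be normalized by $\|\bpsi\|_2^2$, and the finite-sample corrections $(1-1/n)$ and $-3L/n$ in $u_*$ absorb the mean-value linearization and the remaining lower-order terms.

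With the identity and the curvature bound in hand, I would multiply by $h_j$, restrict to $\{j:\mathscr{E}_j\text{ holds}\}$, sum, and normalize by $\|\bpsi\|_2^2+\delta$. Writing $S=\bigl(\tfrac{n}{\|\bpsi\|_2^2+\delta}\sum_j I_{\mathscr{E}_j}h_j^2\bigr)^{1/2}$, the curvature lower bound turns the summed relation into a scalar inequality $u_*S^2\le AS+B$. The linear coefficient $A$ comes from the score term through Cauchy--Schwarz and the clean identity $\sum_j(\Omega_{jj}\bz_j^\top\bpsi)^2=\|\bSigma^{-1}\bX^\top\bpsi\|_2^2\le\|\bX\bSigma^{-1}\|_{\op}^2\|\bpsi\|_2^2$; on $\mathscr{E}_j$ the factor $\|\bpsi\|_2^2$ cancels, leaving $A\lesssim(1+\sqrt{p/n})\,\phi_{\min}(\bSigma)^{-1/2}(1-\eta_n)_+^{-2}$, the numerator of the first term of \eqref{eq:5221}. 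The additive constant $B$ collects the fluctuation $\xi_j$ and the nuisance coupling through $\bX\bQ_j\bh$, the former bounded via the strong convexity \eqref{strongly-convex-2} of $g$ (giving $2p/(n\tau)$) and the latter by a quadratic form in $\bh$, hence by $\E[\|\bh\|_2^2]\le\mathscr{R}$. Solving $u_*S^2\le AS+B$ gives $S\le A/u_*+\sqrt{B/u_*}$, matching the two terms of \eqref{eq:5221}; the half power of $u_*$ in the second term is the signature of $B$ entering additively rather than through the score. Since every bound is uniform in $\delta$, taking $\E[\,\cdot\,]$ and then $\sup_{\delta>0}$ is harmless, and I would work under the conditional expectation $\E_j$ given $(\|\bz_j\|_2,\bX\bQ_j,\bep)$ — under which $\bz_j$ is uniform on $\mathbb{S}^{n-1}(\|\bz_j\|_2)$ and independent of the bulk — to control the dependence between $\bpsi$ and $\bz_j$ in the score and coupling terms.

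The main obstacle is squarely the curvature bound of the second paragraph. For merely convex $\rho$ such as the Huber loss, $\tfrac1n\bX^\top\diag(\bpsi')\bX$ is rank-deficient wherever residuals are large, so $\trace[\nabla_{\by}\bpsi]$ cannot be controlled from the Hessian alone; the curvature must be harvested from the moderate-residual region via Assumption~\ref{as:rho}, shielded from the nuisance re-optimization by the strong convexity of $g$ and the operator-norm event $\mathscr{E}_j$, and the attendant $\|\bpsi\|_2^2$ curvature defect must be carried through the summation rather than discarded. Tracking explicit constants through this Schur complement so that they reproduce $u_*$ is where essentially all the effort lies, and it is precisely the step that the square-loss analysis of \cite{bellec2019second_poincare} bypasses thanks to its globally nondegenerate Hessian.
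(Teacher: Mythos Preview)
Your proposal has a genuine gap at its starting point. The relation you write,
\[
\tfrac{\|\bz_j\|_2^2}{n}\,\trace[\nabla_{\by}\bpsi]\,h_j + \bz_j^\top\bpsi = \|\bpsi\|_2\,\xi_j,
\]
is not an identity derived from the optimization problem: it is the definition \eqref{xi_j} of $\xi_j$, rearranged. Multiplying it by $h_j$ and summing therefore reproduces a tautology, and you are left needing an independent bound on $\sum_j I_{\mathscr{E}_j}h_j\xi_j\|\bpsi\|_2/(\|\bpsi\|_2^2+\delta)$. Your claim that this is handled ``via the strong convexity \eqref{strongly-convex-2} of $g$ (giving $2p/(n\tau)$)'' is not substantiated; $\xi_j$ is not a subgradient of $g$, and no a~priori control on $\xi_j$ is available here --- establishing $\xi_j\approx N(0,1)$ is the goal of \Cref{thm:main_thm}, and \Cref{lem:inter_step_zj} is the main input to that proof, so invoking a bound on $\xi_j$ at this stage is circular.

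What the paper actually does is apply the first-order Stein formula on the sphere, \eqref{eq:Sphere-First-Order-Stein}, to $\bff(\bz_j)=h_j\bpsi(\|\bpsi\|_2^2+\delta)^{-1}$ under the conditional expectation $\E_j$. This is a genuine (non-tautological) relation expressing $(n-1)\|\bz_j\|_2^{-2}\E_j[h_j\bpsi^\top\bz_j/(\|\bpsi\|_2^2+\delta)]$ as the conditional expectation of a divergence, which is then computed from the explicit Jacobians of \Cref{prop:calc_deri} (for twice continuously differentiable $g$). The divergence produces the curvature term $\E_j[h_j^2\trace(\bV\bP_{\bz_j}^\perp)/(\|\bpsi\|_2^2+\delta)]$ together with four correction terms; those corrections are bounded via \Cref{lem:VNM}, specifically $\be_j^\top\bM\be_j\le(n\tau)^{-1}$ and $\|\diag(\bpsi')\bX\bM\|_{\op}\le\tfrac12\sqrt{L/(n\tau)}$, and summed over $j$ they are the true source of the $2p/(n\tau)$ in $B$ and of the $3L/n$ subtracted in $u_*$. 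The curvature lower bound then works as you describe: $K^2\bI_n\preceq\diag(\bpsi')+\diag\{\psi_i^2\}$ combined with $U_j\diag(\bpsi')\preceq\bV$ gives $K^2U_j(n-1)\le U_j\|\bpsi\|_2^2+\trace(\bP_{\bz_j}^\perp\bV)$, and after normalization the $\|\bpsi\|_2^2$ defect becomes $\E_j[h_j^2]$, which is where $\E[\|\bh\|_2^2]$ enters $B$. The quadratic inequality $Ax^2+Bx+C\le0$ and its resolution then proceed as you outline, and the non-smooth penalty is treated afterward by uniform approximation (\Cref{lem:approximation}, \Cref{lem:continuity}). Your intuition about the curvature mechanism, the defect, and the quadratic-in-$S$ structure is largely right; what is missing is the device that links $h_j\bpsi^\top\bz_j$ to the curvature times $h_j^2$ in expectation, and that device is Stein's formula on the sphere, not the definition of $\xi_j$.
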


\subsection{Proof of the main result}
\label{sec:proof_main}

\begin{proof}[of Theorem~\ref{thm:main_thm}]
	Since $\bpsi \neq \bzero_{n}$ for almost every $\bX$
	by Proposition \ref{prop:psi_not_zero},
	$\xi_{j}$ is well-defined with $\P$-probability 1.
	By \Cref{lem:inter_step_zj} and the monotone convergence theorem
        as $\delta\to 0$ for the left-hand side \eqref{eq:5221}, when $u_*>0$  
			\[
				\E\Bigl[
				\frac{n}{\|\bpsi\|_{2}^{2}}
				\sum_{j\in[p]}I_{\mathscr{E}_{j}}h_{j}^{2}
                            \Bigr]
			\leq
			\frac{
                            \bigl[(1+\sqrt{\frac p n})^2+\frac1n\bigr]^{1/2}
                        }{\phi_{\min}(\bSigma)^{1/2}( 1 - \eta_{n} )_{+}^{2}  u_*}
		+
                \frac{\bigl[\frac{2p}{n\tau} + \E[\|\bh\|_2^2]\bigr]^{1/2}}{u_*^{1/2}}. 
			\]
	Under our assumptions, $\| \bSigma \|_{\rm op} \leq {1/\kappa} < +\infty$
	and $\E [\| \bSigma^{1/2} \bh \|_{2}^{2}] \leq \mathscr{R} < +\infty$,
        so that
	there exists some finite constant $\mathscr{N} > 0$ and $\mathscr{A} < +\infty$ 
	independent of $n, p$, 
	such that for $n \geq \mathscr{N}$,
	\begin{equation} \label{eq:main_eq_2}
                \sum_{j \in [p]} 
		\E \Big[ n \| \bpsi \|_{2}^{-2} I_{\mathscr{E}_{j}} h_{j}^{2} \Big]
		\leq \mathscr{A} < +\infty.
	\end{equation}
        By Markov's inequality with respect to the
        uniform distribution on $[p] = \{1,...,p\}$, the set
        \begin{equation}
            \label{J_p}
            J_{n,p} \coloneqq
            \bigl\{j\in [p]:
            \E \bigl[ n I_{\mathscr{E}_{j}} \| \bpsi \|_{2}^{-2} h_{j}^{2} \bigr] \leq \mathscr{A} / a_{p} 
            \bigr\}
            \quad
            \text{ satisfies }
            \quad
	|J_{n,p}| / p \geq 1 - a_{p} / p.
        \end{equation}

		In this paragraph,
            for a given, fixed value of $(\bep,\bX\bQ_j)$
            we view $\bpsi$ as a function
        of $\bz_j$,
        \begin{equation}
        \bpsi: \R^n \to \R^n,
        \qquad
        \bpsi: \bz_j \mapsto \bpsi(\bz_j)=\bpsi(\bep,\bX\bQ_j+\bz_j\bfe_j^\top) 
        \label{bpsi-bz_j}
        \end{equation}
        instead of \eqref{hbpsi-y}, 
        and we denote its Jacobian
        by $\nabla \bpsi(\bz_j)^\top$ at any point $\bz_j$
        where \eqref{bpsi-bz_j} is Fr\'echet differentiable.
        Next, we 
        argue conditionally on $(\bX\bQ_j,\|\bz_j\|_2)$:
        Since $\bz_j/\|\bz_j\|_2$ is independent of $(\|\bz_j\|_2,\bX\bQ_j)$
        and by rotational invariance of the Gaussian distribution,
        conditionally on $(\|\bz_j\|_2,\bX\bQ_j)$ the vector
        $\bz_j$ is uniformly distributed on the sphere
        $\mathbb{S}^{n-1}(\|\bz_j\|_2)$.
	Let $\E_{j}$ denote the conditional expectation of $\bz_{j}$
        given $( \bX \bQ_{j}, \| \bz_{j} \|_2 ) $. 
        By Proposition \ref{prop:Lipschitz_in_X}, 
	the above function is locally Lipschitz.
	From Proposition \ref{prop:Lipschitz_in_X} (iii)
	we have that
	\begin{equation} \label{eq:5098}
			I_{\mathscr{E}_{j}}
			\E _{j} [
                        \| \bpsi \|_{2}^{-2} \| \nabla \bpsi(\bz_{j})\| _{F}^{2}
			]
			\leq
			L (n\tau)^{-1}
			+
			I_{\mathscr{E}_{j}}
			\E_{j} [
			nL^{2} \| \bpsi \|_{2}^{-2} h_{j}^{2} ]
	\end{equation}
        and the right-hand side is finite with probability one
        with respect to $(\bX\bQ_j,\|\bz_j\|_2)$
        thanks to \eqref{eq:main_eq_2} and
	Tonelli's theorem for non-negative measurable functions.
	By Proposition \ref{prop:psi_not_zero}, 
	conditional on almost every $(\bX\bQ_{j}, \|\bz_{j}\|_2)$, 
	$\bpsi (\bz_{j}) \neq \bzero_{n}$ for almost every $\bz_{j} \in \R$.
	We are now in position to apply \Cref{prop:bound_ratio_lip}
	with $\bff = \bpsi / \| \bpsi \|_2$, $\bz = \bz_{j}$ and
	\[
		\xi_{\bff}(\bz_{j})
                := \| \bpsi \|_2^{-1}
                \bigl(\bpsi ^{\top} \bz_{j} 
		- 
                (\| \bz_{j} \|_2^2/n)
		\trace \bigl[\bP_{\bpsi}^\perp 
		(\nabla \bpsi (\bz_{j}))^{\top} \bP_{\bz_j}^\perp
		\bigr]
		\bigr),
	\]
        where $\bP_{\bv}^\perp:=\bI_n - \bP_{\bv}$ with $\bP_{\bv} := \bv\bv^\top/\|\bv\|_2^2$ for any $\bv \in \R^{n}$. By 
        \Cref{prop:bound_ratio_lip}
        and \eqref{eq:5098}, 
	\[ 
		\begin{aligned}
		& I_{\mathscr{E}_{j}}
		\E_{j} [ ( \xi _{\bff} ( \bz_{j} ) - \E_{j} [ \bff (\bz_{j}) ]^{\top} \bz_{j} )^{2} ]
		\\
                    &\leq 
		2
		I_{\mathscr{E}_{j}}
		\| \bz_{j} \|_{2}^{4} (n^{2}-2n)^{-1} 
		\E_{j} [ \| ( \nabla \bff (\bz_{j}) )^{\top} 
		\bP_{\bz_j}^\perp 
		\|_{F}^{2} ]
	\\
		&\leq 
                2
                (1 + \eta_{n})^{4}
                (1-2/n)^{-1} 
		I_{\mathscr{E}_{j}} \Omega_{jj}^{-2}\E_{j}[ \|\bpsi\|_{2}^{-2}\|\nabla\bpsi(\bz_{j})\|_{F}^{2}]
	\\
		&\leq
                2
                (1 + \eta_{n})^{4}
                (1-2/n)^{-1}
		\kappa^{-2} \bigl(
		L (n\tau)^{-1}
		+
		I_{\mathscr{E}_{j}}
		\E_{j} [
		nL^{2} \| \bpsi \|_{2}^{-2} h_{j}^{2} ]\bigr) 
		\end{aligned} 
	\]
	and 
	\[ 
		\begin{aligned}
                &I_{\mathscr{E}_{j}}
		(\|\E_{j}[\bff(\bz_{j})]\|_{2}-1)^{2}
		\\&\leq
		I_{\mathscr{E}_{j}}
		\| \bz_{j} \|_{2}^{2}
		(n-2)^{-1}\E_{j}[\|(\nabla\bff(\bz_{j}))^{\top}
		\bP_{\bz_j}^\perp 
		\|_{F}^{2}].
	\\
		&\leq
                (1 + \eta_{n})^{2}
                (1-2/n)^{-1}
		I_{\mathscr{E}_{j}}
		\Omega_{jj}^{-1} \E _{j} [ \| \bpsi \|_{2}^{-2} \| \nabla \bpsi (\bz_{j}) \|_{F}^{2}] 
	\\
		&\leq 
                (1 + \eta_{n})^{2}
                (1-2/n)^{-1}
		\kappa^{-1} \bigl(
		L (n\tau)^{-1}
		+
		I_{\mathscr{E}_{j}}
		\E_{j} [
		nL^{2} \| \bpsi \|_{2}^{-2} h_{j}^{2} ]
                \bigr),
		\end{aligned} 
	\]
	where the upper bounds follow from 
	$
		I_{\mathscr{E}_{j}} \| \bz_{j} \|_{2}^{4}
		\leq 
		(1 + \eta_{n})^{4} n^{2} \Omega_{jj} ^{-2}
	$.
	Taking $\E$ on both sides, we obtain 
        $\max_{j\in J_{n,p}} \E\bigl[I_{\mathscr{E}_j}(\xi_{\bff} (\bz_{j}) - \E_{j} [ \bff ( \bz_{j} ) ]^{\top} \bz_{j})^2 
        + I_{\mathscr{E}_j}|\| \E_{j} [ \bff (\bz_{j}) ] \|_2 - 1|\bigr]\to 0$.
        Thanks to $\min_{j\in[p]}\P(\mathscr{E}_j) \to 1$
        by \Cref{lem:high_p_Ej}
        this implies that both
        $|\| \E_{j} [ \bff (\bz_{j}) ] \|_2 - 1|$
        and
        $|\xi_{\bff}(\bz_j)^\top\bz_{j}) - \E_{j} [ \bff ( \bz_{j} ) ]^{\top} \bz_{j}|$
        converge in probability to 0 
	uniformly over $j \in J_{n,p}$.

        We now study the asymptotic distribution of $\xi_{\bff}(\bz_{j})$. 
	Since $\bz_{j} \sim N(\bzero_{n}, \Omega_{jj}^{-1} \bI_{n})$ is independent with $\bX\bQ_{j}$ by Proposition \ref{prop:indep_zj},
	without loss of generality, 
	we can assume that 
	$\bz_{j} = \| \bz_{j} \|_{2} \bzeta_{j} / \| \bzeta_{j} \|_{2} $
	for some $\bzeta_{j} \sim N( \bzero_{n}, \bI_{n})$ independent of 
        $(\bX\bQ_{j}, \|\bz_{j}\|_{2})$.
	Then $\E_{j}$ coincides with the conditional expectation of $ \bzeta_{j}$ given $(\bX\bQ_{j}, \|\bz_{j}\|_{2})$.
	After some rearrangement, 
	\begin{equation*} 
	\begin{aligned}
		\xi_{\bff} (\bz_{j})
		= 
		\xi_{\bff} (\bz_{j})
		-
		\E_{j}[\bff(\bz_{j})]^{\top}\bz_{j}
		+
                \|\E_{j}[\bff(\bz_{j})]\|_{2}\frac{\| \bz_{j} n^{-1/2} \|_{2}}{\|\bzeta_{j} n^{-1/2}\|_{2}}\Bigl(\frac{\E_{j}[\bff(\bz_{j})]}{\|\E_{j}[\bff(\bz_{j})]\|_{2}}\Bigr)^\top \bzeta_{j}.
	\end{aligned}
	\end{equation*}
	Uniformly over $j \in J_{n,p}$, we have
	(i)
	$|\xi_{\bff} (\bz_{j}) - \E_{j} [ \bff ( \bz_{j} ) ]^{\top} \bz_{j}|
        \to 0 $
	and 
	$\| \E_{j} [ \bff (\bz_{j}) ] \|_2 \to 1$ in probability,
	(ii)
	$
	\frac{ \E_{j} [ \bff (\bz_{j}) ]^{\top} } { \| \E_{j} [ \bff (\bz_{j}) ] \|_{2} } \bzeta _{j}
        \sim N(0,1)
	$
	and 
	(iii)
	$\| \Omega_{jj}^{1/2} \bz_{j} \|_2 / \| \bzeta_{j} \|_{2}\to 1$
	in probability. 
	By Slutsky's Theorem,
	$ \Omega_{jj}^{1/2} \xi_{\bff} (\bz_{j})$
	converges in distribution to $N(0,1)$ uniformly over $j \in J_{n,p}$.
	That is, for any $t \in \R$,
        $
		\max_{j \in J_{n,p}}	
		|\P (\Omega_{jj}^{1/2} \xi_{\bff} (\bz_{j}) \leq t) - \Phi (t)|
		\to 0.
        $

        It remains to relate $\xi_{\bff}(\bz_j)$ to
        $\xi_j$ defined in \eqref{xi_j}.
        As the term $\|\bpsi\|_2^{-1}\bz_j^\top \bpsi$ 
        present in both $\xi_j$ and $\xi_{\bff}(\bz_j)$ cancel out,
        we have the decomposition
        \begin{align}
            \nonumber
 &               \|\bpsi\|_2 \|\bz_j\|_2^{-2} n \bigl(
                 \xi_j
                 -
                \xi_{\bff} (\bz_{j})
                \bigr)
\\		& =
                (\hbeta_j - \beta_j) \trace[\nabla_{\by}\bpsi^\top]
                +
                \trace (\bP_{\bpsi}^\perp  (\nabla \bpsi (\bz_{j}))^{\top} \bP_{\bz_j}^\perp )
                \label{inside-the-trace-0}
                \\
                &=
                \trace\bigl[\bigl(
                    (\hbeta_j - \beta_j)\nabla_{\by}\bpsi^\top
                    + \nabla \bpsi(\bz_j)^\top 
                \big)
            \bP_{\bpsi}^\perp
                \bigr]
              \\&\qquad\quad+
                  (\hbeta_j - \beta_j) \trace\bigl[ 
                \bP_{\bpsi} \nabla_{\by}\bpsi^\top 
                \bigr] 
            +
            \trace\bigl[\bP_{\bpsi}^\perp \nabla \bpsi(\bz_j)^\top \bP_{\bz_j} 
            \bigr].
            \label{eq:third-term-divergence}
        \end{align}
        The matrix inside the trace in \eqref{inside-the-trace-0}
        is zero thanks to \eqref{diff_ratio}. 
        It follows that only the two terms in \eqref{eq:third-term-divergence}
        remain, hence
        $$
        \E[I_{\mathscr{E}_j}
        (\xi_j - \xi_{\bff}(\bz_j))^2
        ]
        \le
        (2/n)
        \E\bigl[I_{\mathscr{E}_h}
        \|\bz_j\|_2^2
        \|\bpsi\|_2^{-2}
        \bigl(
        (\hbeta_j - \beta_j)^2\|\nabla_{\by}\bpsi\|_{\op}^2
        +
        \|\nabla \bpsi(\bz_j)\|_{\op}^2
        \bigr)
        \bigr].
        $$
        Since $I_{\mathscr{E}_j}\|\bz_j\|^2/n \le \Omega_{jj}^{-1}(1+\eta_n)^2$
        and
        $\| \nabla_{\by}\bpsi \|_{\rm op} \leq L$ by \Cref{prop:Lipschitz_in_X} (i) with $\bX = \tbX$, the first term is bounded from above
        by $L\Omega_{jj}^{-1}(1+\eta_n)^2 \E[
        I_{\mathscr{E}_j}
        \|\bpsi\|_2^{-2}
        h_j^2]$ which converges to 0 uniformly over $j\in J_{n,p}$
        by definition of $J_{n,p}$ in \eqref{J_p}.
        The second term also converges to 0 uniformly over $j\in J_{n,p}$
        thanks to \eqref{eq:5098}.
        Thus $\max_{j\in J_{n,p}}
        \E[I_{\mathscr{E}_j}(\xi_j - \xi_{\bff}(\bz_j))^2] \to 0$
        which implies that
        $|\xi_j - \xi_{\bff}(\bz_j)|\to 0$ in probability uniformly
        over $j\in J_{n,p}$ and Slutsky's theorem completes the proof
        of \eqref{eq:J_p-main-theorem} for $\xi_j$.

        To prove a similar result for $\xi_j'$, 
            it is enough to prove $\Omega_j^{1/2}|\xi_j' - \xi_j|\to^\P0$
            uniformly over $j\in J_{n,p}$ by Slutsky's theorem.
            As 
            $|\xi_j - \xi_j'| = \big|\trace[\nabla_{\by}\bpsi]
            \big|\|\bpsi\|_2^{-1}\big|h_j\big| ~ \big|\|\bz_j\|_2^2/n - \Omega_{jj}^{-1}\big|$
            and $|\trace[\nabla_{\by}\bpsi]|\le n L$, by the Cauchy-Schwarz
            inequality we find
            $$\E[I_{\mathscr{E}_j}|\xi_j - \xi_j'|]
            \le
            \Omega_{jj}^{-1}
            n L \E[I_{\mathscr{E}_j} h_j^2 \|\bpsi\|_2^{-2}]^{1/2}
            \E[(\Omega_{jj}\|\bz_j\|_2^2/n - 1)^2]^{1/2}
            .$$
            Since $\E[(\Omega_{jj}\|\bz_j\|_2^2/n - 1)^2]^{1/2} = \sqrt{2/n}$
            and $\Omega_{jj}\in [\kappa, 1/\kappa]$,
            the previous display converges to 0 uniformly over $j\in J_{n,p}$
            by definition of $J_{n,p}$ in \eqref{J_p}.
\end{proof}

\section{Stein formulae on the sphere}
\label{sec:stein}

The goal of this section is to prove \Cref{prop:bound_ratio_lip}
and to develop Stein formulae for random vectors $\bz$
uniformly distributed on the sphere. 

Let $\mathbb{S}^{n-1}(R)$ be the sphere in $\R^{n}$ 
with center $\bzero$ and radius $R>0$.
We say that $\bz$ is uniformly distributed in $\mathbb{S}^{n-1}(R)$
    and write
    $\bz \sim \text{\rm Unif}(\mathbb{S}^{n-1}(R))$
    if $\bz$ is equal in distribution to $R \bzeta /\|\bzeta\|_2$
    where $\bzeta\sim N(\mathbf{0},\bI_n)$.
We first develops Stein's formulae with respect to
    $\bz \sim \text{\rm Unif}(\mathbb{S}^{n-1}(R))$ 
    for functions $\bff: \bz \in \mathbb{S}^{n-1}(R) \mapsto \bff (\bz) \in \R^{n}$ in Sobolev spaces over $\mathbb{S}^{n-1}(R)$.

        We derive next Stein formulae for functions in Sobolev spaces
        over $\mathbb{S}^{n-1}(R)$.
        One possible construction
        of such Sobolev spaces is obtained by completion
        of the space of infinitely differentiable functions $\mathbb{S}^{n-1}(R)\to \R$ with respect to the desired Sobolev norm as follows.
        Here, $\mathbb{S}^{n-1}(R)$ is viewed as a
        compact Riemannian manifold equipped with the canonical
        metric (the metric induced as a submanifold
        of $\R^{n}$ equipped with the Euclidean metric).
    As it will be convenient for compatibility with the rest of the 
    paper to conserve the partial derivatives
    with respect to the canonical basis in $\R^n$, we adopt the following
    notation.
    For a smooth function $h:\mathbb{S}^{n-1}(R)\to \R$
    and $\Omega\subset\R^n$ an open neighborhood of $\mathbb{S}^{n-1}(R)$,
    define the smooth function
    $\check h:\Omega\to\R$ by
    $\check h(\bx) = h(R \bx/\|\bx\|_2)$ and define the gradient
    of $h$ as that of $\check h$, i.e., for $\bx\in \mathbb{S}^{n-1}(R)$,
    set
    $\nabla h(\bx) = ((\partial/\partial x_{1})\check h(\bx), ...,(\partial/\partial x_{n})\check h(\bx))$.
    For every $\bx\in\mathbb{S}^{n-1}$,
    the gradient $\nabla h(\bx)$ belongs to the hyperplane orthogonal to $\bx$
    which is the tangent space of $\mathbb{S}^{n-1}$ at $\bx$.
    Furthermore if $\bgamma:\R\to\mathbb{S}^{n-1}(R)$ is a smooth
    curve with $\bgamma(0) = \bx, (d/dt)\bgamma(t)|_{t=0} = \bv$ then
    $\bv^\top\bx = 0$ and $\nabla h(\bx)^\top\bv = (d/dt) h(\bgamma(t)) |_{t=0}$.
    For such smooth function $h$ and equipped with its gradient,
    for $\alpha\in\{1,2\}$
    we define the Sobolev norm
    $$
    \|h\|_{1,\alpha}
    =
    \E\bigl[|h(\bz)|^\alpha\bigr]^{1/\alpha}
    +
    \E\bigl[\|\nabla h(\bz)\|_2^\alpha\bigr]^{1/\alpha},
    \qquad
    \bz \sim \text{\rm Unif}(\mathbb{S}^{n-1}(R))
    $$
    and the Sobolev space $W^{1,\alpha}(\mathbb{S}^{n-1}(R))$ as the completion
    of the space of smooth functions over $\mathbb{S}^{n-1}(R)$
    with respect to the above norm.
    This definition is equivalent to the definition
    given in \cite[Definition 2.1]{hebey1996sobolev}.
    By \cite[Proposition 2.3]{hebey1996sobolev},
    since the manifold $\mathbb{S}^{n-1}(R)$ is compact
    the resulting 
    Sobolev spaces do not depend on the chosen metric.
    Equivalently, the Sobolev space
    $W^{1,\alpha}(\mathbb{S}^{n-1}(R))$ is also
    the completion with respect to the above norm
    of the space of once continuously differentiable functions
    on $\mathbb{S}^{n-1}(R)$.

    If $h$ is locally Lipschitz on $\mathbb{S}^{n-1}(R)$ (i.e., every point has a neighborhood in $\mathbb{S}^{n-1}(R)$ on which $h$ is Lipschitz), then
   again by considering an open neighborhood $\Omega\subset\R^n$
   of $\mathbb{S}^{n-1}(R)$, the function 
   $\check h(\bx) = h(R\bx/\|\bx\|_2)$ is locally Lipschitz on $\Omega$. 
   Thus, in this case and by Rademacher's theorem, $\nabla h(\bz)$ is well defined almost everywhere in $\mathbb{S}^{n-1}(R)$ as the gradient of $\nabla \check h(\bx)$,
   and $h \in W^{1,\alpha}(\mathbb{S}^{n-1}(R))$ if and only if
   $\E\big[\|\nabla h(\bz)\|_{2}^\alpha\big]<\infty$. 
   For example, $h \in W^{1,\alpha}(\mathbb{S}^{n-1}(R))$ when $h$ is $L$-Lipschitz on $\mathbb{S}^{n-1}(R)$.  

   Finally, for $\alpha\in\{1,2\}$ define $W^{1,\alpha}(\mathbb{S}^{n-1}(R))^{n}$
   as the space of $\R^n$ valued functions $\bff=(f_1,...,f_n)$
   with all coordinates $f_i$ belonging to $W^{1,\alpha}(\mathbb{S}^{n-1}(R))$,
   equipped with the norm
   $$\|\bff\|_{1,\alpha} = \E[ \|\bff(\bz)\|_2^\alpha]^{1/\alpha} 
   + \E[\|\nabla \bff(\bz)\|_F^\alpha]^{1/\alpha}$$
   where the gradient $\nabla \bff$ is the matrix in $\R^{n\times n}$
   with columns $\nabla f_1,...,\nabla f_n$,

\begin{lemma} [Stein's formula on the sphere] \label[lemma]{lem:stein_on_sphere} 
	Let $n \geq 3$, $R>0$ and $\bz \sim \text{\rm Unif}(\mathbb{S}^{n-1}(R))$.
	Let $\bP_{\bx}^\perp = \bI_n - \bx \bx^{\top} / \| \bx \|_2^2$ for $\bx\neq {\bf 0}$.
        Then, for all $\bff = (f_1,\ldots,f_n) \in W^{1,1}(\mathbb{S}^{n-1}(R))^{n}$, 
	\begin{align}
	\E [ \bff(\bz)^{\top} \bz] = (n-1)^{-1}R^2
	\E [ \trace ( (\nabla \bff (\bz))^{\top}\bP_{\bz}^\perp)], 
	\label{eq:Sphere-First-Order-Stein} 
	\end{align}
	where $\nabla\bff = (\nabla f_1,\ldots,\nabla f_n)$. 
        Moreover, for all $\bff = (f_1,\ldots,f_n) \in W^{1,1}(\mathbb{S}^{n-1}(R))^{n}$, 
	\begin{align}
	& \E[\|\bff(\bz)-\E[\bff(\bz)]\|_2^2]
	\le
	(n-2)^{-1}R^2 
	\E[\|\nabla \bff(\bz)^\top\bP_{\bz}^\perp\|_F^2],
	\label{eq:Sphere-Poincare}
	\\
	\nonumber
	&\E\bigl[\bigl(nR^{-2}\bff(\bz)^{\top}\bz-\trace[
                \nabla \bff(\bz)^\top \bP_{\bz}^\perp]\bigr)^2\bigr] 
	\\&=
	nR^{-2}
		\E\bigl[\|\bff\|_{2}^{2}\bigr]
	+
	n(n-2)^{-1}
	\E \trace\bigl[
		\bigl(
		\nabla \bff(\bz)^\top\bP_{\bz}^\perp
		\bigr)^2
	\bigr]
	-
	2(n-2)^{-1}\E\bigl[\trace(\nabla f(\bz)^\top\bP_{\bz}^\perp)^{2}\bigr]
	\label{eq:Sphere-SOS-equality}
	\\&\le
	nR^{-2}\E\bigl[\|\bff\|_{2}^{2}\bigr]
	+
	(1-2/n)^{-1}
	\E\bigl[\|\nabla \bff(\bz)^\top\bP_{\bz}^\perp\|_F^2\bigr].
	\label{eq:Sphere-SOS-inequality}
	\end{align}
\end{lemma}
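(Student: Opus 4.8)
The plan is to reduce all four displays to Gaussian computations through the representation $\bz = R\bzeta/\|\bzeta\|_2$ with $\bzeta\sim N(\bzero,\bI_n)$, together with the $0$-homogeneous extension $\check\bff(\bx)=\bff(R\bx/\|\bx\|_2)$, which satisfies $\check\bff(\bzeta)=\bff(\bz)$. By the paper's definition of the sphere gradient, each column of $\nabla\bff$ is tangential, and Euler's relation makes the ambient Jacobian of $\check\bff$ $(-1)$-homogeneous with $\nabla\check\bff(\bzeta)=(R/\|\bzeta\|_2)\,\nabla\bff(\bz)$. I would first prove every display for $\bff$ smooth on $\mathbb{S}^{n-1}(R)$ and then extend to general $\bff$ by density of smooth functions in $W^{1,\alpha}(\mathbb{S}^{n-1}(R))^n$. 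Each translation rests on the independence of $\|\bzeta\|_2$ from the direction $\bzeta/\|\bzeta\|_2$, so that any sphere-measurable quantity factorizes against a function of $\|\bzeta\|_2$, and on the $\chi^2_n$ moments $\E[\|\bzeta\|_2^2]=n$, $\E[\|\bzeta\|_2^{-2}]=(n-2)^{-1}$ and $\E[\|\bzeta\|_2^{-1}]/\E[\|\bzeta\|_2]=(n-1)^{-1}$, all finite precisely because $n\ge 3$.

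Write $S=\bff(\bz)^\top\bz$, $D=\trace(\nabla\bff(\bz)^\top\bP_{\bz}^\perp)$ and $M=\nabla\bff(\bz)^\top\bP_{\bz}^\perp$, so that $D=\trace M$. For \eqref{eq:Sphere-First-Order-Stein} I sum the coordinatewise Gaussian Stein identity $\E[\check f_i(\bzeta)\zeta_i]=\E[\partial_i\check f_i(\bzeta)]$ to get $\E[\check\bff(\bzeta)^\top\bzeta]=\E[\trace\nabla\check\bff(\bzeta)]$; homogeneity gives $\check\bff(\bzeta)^\top\bzeta=(\|\bzeta\|_2/R)S$ and $\trace\nabla\check\bff(\bzeta)=(R/\|\bzeta\|_2)D$, so independence yields $R^{-1}\E[\|\bzeta\|_2]\E[S]=R\,\E[\|\bzeta\|_2^{-1}]\E[D]$ and the ratio $(n-1)^{-1}$ produces \eqref{eq:Sphere-First-Order-Stein}. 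For \eqref{eq:Sphere-Poincare} I apply the Gaussian Poincaré inequality $\mathrm{Var}(\check f_i(\bzeta))\le\E[\|\nabla\check f_i(\bzeta)\|_2^2]$ coordinatewise and sum: the left side equals $\E[\|\bff(\bz)-\E\bff(\bz)\|_2^2]$, while the right side equals $R^2\E[\|\bzeta\|_2^{-2}]\,\E[\|\nabla\bff(\bz)^\top\bP_{\bz}^\perp\|_F^2]$, and $\E[\|\bzeta\|_2^{-2}]=(n-2)^{-1}$ is exactly the stated constant.

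The second-order equality \eqref{eq:Sphere-SOS-equality} is the crux, and I obtain it from the Gaussian second-order Stein identity $\E[(\check\bff(\bzeta)^\top\bzeta-\trace\nabla\check\bff(\bzeta))^2]=\E[\|\check\bff(\bzeta)\|_2^2]+\E[\trace(\nabla\check\bff(\bzeta)\nabla\check\bff(\bzeta))]$. Since the columns of $\nabla\bff(\bz)$ are tangential, $\nabla\bff(\bz)^\top\bP_{\bz}^\perp=\nabla\bff(\bz)^\top$, hence $\trace((\nabla\bff(\bz)^\top\bP_{\bz}^\perp)^2)=\trace(\nabla\bff(\bz)\nabla\bff(\bz))=\trace(M^2)$. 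Translating each term through homogeneity and independence gives the single relation
\[
\frac{n}{R^2}\E[S^2]-2\,\E[SD]+\frac{R^2}{n-2}\E[D^2]=\E[\|\bff(\bz)\|_2^2]+\frac{R^2}{n-2}\E[\trace(M^2)].
\]
Multiplying by $n/R^2$ and then subtracting $\tfrac{2}{n-2}\E[D^2]$ from both sides (so the coefficient of $\E[D^2]$ on the left drops from $\tfrac{n}{n-2}$ to $1$), the left side becomes $\E[(nR^{-2}S-D)^2]=\tfrac{n^2}{R^4}\E[S^2]-\tfrac{2n}{R^2}\E[SD]+\E[D^2]$ and the right side is exactly \eqref{eq:Sphere-SOS-equality}, using $\trace(M)^2=D^2$. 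The inequality \eqref{eq:Sphere-SOS-inequality} then follows by discarding the nonpositive term $-2(n-2)^{-1}\E[D^2]$, bounding $\trace(M^2)\le\|M\|_F^2$ (from $M_{ik}M_{ki}\le\tfrac12(M_{ik}^2+M_{ki}^2)$ summed over $i,k$), and noting $n(n-2)^{-1}=(1-2/n)^{-1}$.

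The main obstacle is regularity: the $(-1)$-homogeneous Jacobian $\nabla\check\bff$ is singular at the origin, so I would justify the Gaussian integration by parts first for $\bff$ smooth on the sphere, where $\check\bff$ is smooth off the origin and the singular contribution is integrable against the Gaussian precisely because $n\ge3$ and the Sobolev norm is finite, and only then pass to general $\bff$ by density, checking continuity of both sides in the relevant $W^{1,\alpha}$ norm. The second-order step is the delicate one, since it requires the square-integrability of $\nabla\bff$ and the exact cancellation of the $\E[D^2]$ coefficients; verifying that cancellation together with the tangency identity $\nabla\bff^\top\bP_{\bz}^\perp=\nabla\bff^\top$ is where the bookkeeping must be carried out with care.
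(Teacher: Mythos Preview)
Your proposal is correct and follows essentially the same route as the paper: reduce to smooth $\bff$ by density in $W^{1,\alpha}(\mathbb{S}^{n-1}(R))^n$, pull back to $\R^n$ via the $0$-homogeneous extension $\check\bff(\bx)=\bff(R\bx/\|\bx\|_2)$, apply the Gaussian first-order Stein formula, the Gaussian Poincar\'e inequality, and the second-order Stein formula of \cite{BZ18} to $\check\bff(\bzeta)$, and then factor every expectation using the independence of $\|\bzeta\|_2$ and $\bzeta/\|\bzeta\|_2$ together with the $\chi^2_n$ moments you list. Your algebra for \eqref{eq:Sphere-SOS-equality}---multiplying by $n/R^2$ and subtracting $2(n-2)^{-1}\E[D^2]$ to convert the coefficient $n/(n-2)$ to $1$---matches the paper's exactly.

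The one technical difference concerns the singularity of $\nabla\check\bff$ at the origin, which you flag but do not resolve. The paper handles this by multiplying $\check\bff$ by a smooth radial cutoff $\phi(\|\bx\|_2/\delta)$ vanishing near $0$, so that $\bvarphi_\delta$ has bounded gradient on all of $\R^n$ and the Gaussian identities apply without qualification; one then passes $\delta\to 0$ by dominated convergence, using $\sup_{\delta,\bx}\|\bx\|_2\|\nabla\bvarphi_\delta(\bx)\|_F<\infty$ and $\E[\|\bzeta\|_2^{-2}]<\infty$ for $n\ge 3$. Your alternative---arguing directly that $\check\bff$ lies in the Gaussian Sobolev space because $\|\nabla\check\bff(\bzeta)\|_F\lesssim\|\bzeta\|_2^{-1}$ is square-integrable for $n\ge 3$---is also valid, but requires you to check separately that each of the three Gaussian identities (Stein, Poincar\'e, second-order Stein) extends to functions in $W^{1,2}$ with respect to the Gaussian measure rather than $C^1(\R^n)$. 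The cutoff buys you a single, uniform justification for all three.
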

    Note that \eqref{eq:Sphere-Poincare} is the classical
    Poincar\'e inequality on the sphere.
    A proof is provided for completeness.

\begin{proof}[of Lemma \ref{lem:stein_on_sphere}] 
    As the operators $T$ and $T_{ij}$ defined by
    $T\bff(\bz) = \bff(\bz)^{\top}\bz$ and $T_{ij}\bff(\bz)=[\nabla \bff(\bz)^\top\bP_{\bz}^\perp]_{ij}$
    for every $i,j=1,...,n$ 
    are all continuous linear mappings 
    from $W^{1,\alpha}(\mathbb{S}^{n-1}(R))^{n}$
    to the $L_\alpha$ space with the norm 
    $\|f\|_{L_\alpha}=(\E[|f(\bz)|^\alpha])^{1/\alpha}$, 
    we assume without loss of generality that 
        all coordinates  of $\bff$
        are continuously differentiable.
    Indeed, if $(\bff^{(q)})_{q\ge 1}$ is a sequence of smooth functions
    over the sphere converging to $\bff$ in $W^{1,\alpha}(\mathbb{S}^{n-1}(R))^{n}$ for $\alpha=1$ and \eqref{eq:Sphere-First-Order-Stein} holds
    for $\bff^{(q)}$ then \eqref{eq:Sphere-First-Order-Stein} also holds
    for the limit by continuity; the same argument applies
    with $\alpha=2$ for \eqref{eq:Sphere-Poincare}-\eqref{eq:Sphere-SOS-equality}-\eqref{eq:Sphere-SOS-inequality}.

    Let $\bzeta\sim N(\mathbf{0},\bI_n)$. 
    Assume without loss of generality $\bz = R\bzeta/\|\bzeta\|_2$ 
    as $R\bzeta/\|\bzeta\|_2\sim \text{\rm Unif}(\mathbb{S}^{n-1}(R))$. 
    Let $\phi(t)$ be a continuously differentiable function in $\R$ with 
    $\phi(t)=0$ for $t\le 0$ and $\phi(t)=1$ for $t\ge 1$. 
    For $\delta >0$ define 
    $\bvarphi_\delta(\bx)=\phi(\|\bx\|_2/\delta)\bff(R\bx/ \|\bx\|_2)$. 
    As $\|\bzeta\|_2$ is independent of $\bz$ and $\bvarphi_\delta(\bx)$ has uniformly bounded gradient, 
    the first order Stein formula for $\bvarphi_{\delta}$ yields 
    \begin{align*}
    \E\big[\phi(\|\bzeta\|_2/\delta)\|\bzeta\|_2\big]\E [ \bff(\bz)^{\top} \bz] 
    = \E\big[R \bvarphi_\delta(\bzeta)^{\top} \bzeta\big] 
    = R\,\E\big[\trace(\nabla \bvarphi_\delta (\bzeta))\big]. 
    \end{align*}
    By the product and chain rules, $\nabla \bvarphi_\delta (\bx)$ is given by 
    $$
    \phi'(\|\bx\|_2/\delta)(\bx/(\|\bx\|_2\delta))\bff(R\bx/ \|\bx\|_2)^\top
    +\phi(\|\bx\|_2/\delta)(R/\|\bx\|_2)\bP_{\bx}^\perp\big(\nabla \bff(R\bx/ \|\bx\|_2)\big)
    $$ 
   with $\phi'(t)=(d/dt)\phi(t)$ and $\phi'(0)=\phi'(1)=0$. 
   As $\sup_{\delta>0, \bx\in\R^n} \|\bx\|_2\|\nabla \bvarphi_\delta (\bx)\|_{\rm F} \le C<\infty$ 
   and $\E[\|\bzeta\|_2^{-2}]=1/(n-2)<\infty$, the dominated convergence theorem gives 
    \begin{align*}
    \E [ \bff(\bz)^{\top} \bz] 
    = \frac{R\lim_{\delta\to 0}\E\big[\trace(\nabla \bvarphi_\delta (\bzeta))\big]}
    {\lim_{\delta\to 0}\E\big[\phi(\|\bzeta\|_2/\delta)\|\bzeta\|_2\big]}
        = \frac{R^2\E\big[1/\|\bzeta\|_2\big]\E\big[\trace(\nabla \bff(\bz)^\top\bP_{\bz}^\perp)\big]}
    {\E\big[\|\bzeta\|_2\big]}, 
    \end{align*}
    which yields \eqref{eq:Sphere-First-Order-Stein} due to 
    $\E[\|\bzeta\|_2]/\E[\|\bzeta\|_2^{-1}] = (n-1)$. 

    Next, as the exchange of limit and expectation is allowed when 
    $\bvarphi_{\delta}\to \bvarphi_{0+}=\bff$, 
    the Gaussian Poincar\'e inequality
    \cite[Theorem 1.6.4]{Bog98}
    yields
    \begin{align*}
    \E[ \|\bff(\bz)-\E[\bff(\bz)]\|_2^2]
    \le
    \lim_{\delta\to 0+}\E[\|\nabla \bvarphi_{\delta}(\bzeta) \|_F^2]=
    \E[R^2 \|\nabla \bff(\bz)^\top\bP_{\bz}^\perp\|_F^2\|\bzeta\|_2^{-2}].
    \end{align*}
    Since
    $\E[
    \|\bzeta\|_2^{-2}
    ]=1/(n-2)$ and
    $(\|\bzeta\|_2,\bz)$ are independent,
    we obtain \eqref{eq:Sphere-Poincare}. 
    Finally by applying the Second Order Stein formula of
    \cite{BZ18} to $\bvarphi_\delta(\bzeta)$ we find
    by dominated convergence
    \begin{align*}
    &\E\bigl[
    \bigl(
        R^{-1}\|\bzeta\|_2
        \bz^\top \bff(\bz)
        -
        R\|\bzeta\|_2^{-1}\trace[\nabla \bff(\bz)^\top\bP_{\bz}^\perp]
    \bigr)^2
    \bigr]
    \\&=
    \lim_{\delta\to 0^+}
    \E\bigl[
    \bigl(
        \bzeta^\top \bvarphi_{\delta}(\bzeta)
        -
        \trace[\nabla \bvarphi_{\delta}(\bzeta)]
    \bigr)^2
    \bigr]
  \\&=
    \lim_{\delta\to 0^+}
    \E\bigl[\|\bvarphi_{\delta}(\bzeta)\|_2^2\bigr]
  +
  \E
  \trace\bigl[
  \bigl(\nabla \bvarphi_{\delta}(\bzeta)^\top\bigr)^2
  \bigr]
  \\&=
  \E\bigl[\|\bff(\bz)\|_2^2\bigr]
  +
  R^2
  \E
  \trace\bigl[
      \bigl(\nabla \bff(\bz)^\top\bP_{\bz}^\perp\bigr)^2
  \bigr]
  \E 
  \bigl[
  \|\bzeta\|_2^{-2}
  \bigr]
  \\&=
  \E\bigl[\|\bff(\bz)\|_2^2\bigr]
  +
  R^2
  \E
  \trace\bigl[
      \bigl(\nabla \bff(\bz)^\top\bP_{\bz}^\perp\bigr)^2
  \bigr]
  /(n-2), 
    \end{align*}
    where the last equality follows from the independence
    of $\bz$ and $\|\bzeta\|_2$.
    We now simplify the left-hand side in order to get rid
    of $\|\bzeta\|_2$. By expanding the square
    and using the independence of $(\bz,\|\bzeta\|_2)$,
    the above display is equal to
    \begin{align*}
        &R^{-2}\E[ \|\bzeta\|_2^2]
        \E[(\bz^\top \bff(\bz))^2]
        -
        2\E\bigl[\bz^\top \bff(\bz) \trace[\nabla \bff(\bz)^\top\bP_{\bz}^\perp]\bigr]
        +
        R^{2}\E[ \|\bzeta\|_2^{-2}]
        \E\bigl[(\trace[\nabla \bff(\bz)^\top\bP_{\bz}^\perp])^2\bigr]
        \\
        &=
        nR^{-2}
        \E[(\bz^\top \bff(\bz))^2]
        -
        2\E\bigl[\bz^\top \bff(\bz) \trace[\nabla \bff(\bz)^\top\bP_{\bz}^\perp]\bigr]
        +
        R^{2}(n-2)^{-1}
        \E\bigl[(\trace[\nabla \bff(\bz)^\top\bP_{\bz}^\perp])^2\bigr]
      \\&=
        \E\Bigl[
        \Bigl(
            \frac{\sqrt n}{R}
            \bz^\top \bff(\bz)
            -
            \frac{R}{\sqrt n}\trace[\nabla \bff(\bz)^\top\bP_{\bz}^\perp]
        \Bigr)^2
        \Bigr]
        + R^2\Bigl(\frac{1}{n-2} - \frac{1}{n}\Bigr)
        \E\Bigl[
        \big(\trace[\nabla \bff(\bz)^\top\bP_{\bz}^\perp]\big)^2
        \Bigr].
    \end{align*}
    Since
    $1/(n-2) - 1/n
    = 2n^{-1}(n-2)^{-1}$, we obtain
    \eqref{eq:Sphere-SOS-equality} after multiplying by $n/R^2$.
The proof is complete since \eqref{eq:Sphere-SOS-inequality}
follows directly from \eqref{eq:Sphere-SOS-equality}
by the Cauchy-Schwarz inequality.
    \end{proof}

\begin{proof}[of \Cref{prop:bound_ratio_lip}]
If $\bff$ is locally Lipschitz and $\E [ \|\nabla \bff (\bz) ^{\top}\|_{F}^{2} ]<\infty$, 
then $\bff\in W^{1,2}(\mathbb{S}^{n-1}(R))$.
We consider the mapping $\bff (\bz) - \E \bigl[ \bff (\bz) \bigr]$ rather than $\bff (\bz)$ in \eqref{eq:Sphere-SOS-equality} and \eqref{eq:Sphere-SOS-inequality}.
	Multiplying $R^4n^{-2}$ on both sides of the inequality \eqref{eq:Sphere-SOS-inequality}, it provides
	\begin{align*} 
			\E 
			[ 
			(
			\xi_{\bff}(\bz)
			- 
			\E [ \bff (\bz) ]^{\top} \bz
			)^{2}
			]
		& \leq
			R^2 n^{-1} \E [ \| \bff - \E [ \bff (\bz) ] \|_{2}^{2} ]
			+
			R^4 (n^2 - 2n)^{-1} 
			\E [ \| \nabla \bff (\bz) ^{\top} \bP_{\bz}^{\perp} \|_{F}^{2} ]
		\\
		& \leq 
			2 R^4 (n^2 - 2n)^{-1} \E [\| \nabla \bff (\bz) ^\top \bP_{\bz}^{\perp} \|_{F}^{2}],
	\end{align*}
where the second inequality follows from \eqref{eq:Sphere-Poincare}.
By the triangle inequality and \eqref{eq:Sphere-Poincare}, 
	\begin{align*} 
            \E [ | \| \bff (\bz) \|_{2} - \| \E [ \bff (\bz) ] \|_{2} |^2 ]
	& \leq
        \E [ \| \bff (\bz) - \E [ \bff (\bz) ] \|_2^2 ]
	\\
	& \leq
		R^2 (n-2)^{-1} \E [ \| \nabla \bff (\bz)^\top \bP_{\bz}^{\perp} \|_{F}^{2} ]
	.
	\end{align*}
If $\bff (\bz) = \tbf (\bz) / \| \tbf (\bz) \|_{2} $ with locally Lipschitz $\tbf$ on $\mathbb{S}^{n-1}(R)$ then  
$\tbf (\bz)/(\delta\vee \| \tbf (\bz) \|_{2})$ is locally Lipschitz for $\delta>0$ 
and converges to $\bff$ in $W^{1,2}(\mathbb{S}^{n-1}(R))^{n}$ as $\delta\to 0+$ 
when $\P(\|\bff(\bz)\|_2 \ne 0)=1$ and $\E [ \| \tbf (\bz) \|_{2}^{-2} \| \nabla \tbf (\bz) \|_{F}^{2} ] < + \infty$. 
Thus, the proof still applies.
\end{proof}


\section{Bounds on $(\hbeta_j-\beta_j)^2 \|\mathbf{\psi}\|_2^{-2}$}
\label{sec:bound-frobenius}

The goal of this section is to prove \Cref{lem:inter_step_zj}.

\subsection{High probability events $\mathscr{E}_j$}

\begin{lemma} 
    [high probability of $\mathscr{E}_{j}$]
    \label[lemma]{lem:high_p_Ej}
	Assume that $\bX$ has iid $N(\bzero, \bSigma)$ rows.
	Then 
        \begin{equation}
            \label{eq:bound-squared-operator-norm}
				\E [ \| \bX \bSigma^{-1/2} \|_{\op}^{2}] 
			\leq
				( \sqrt{n} + \sqrt{p} ) ^{2} + 1
		. 
        \end{equation}
	Furthermore, with $ \eta_{n} = \sqrt{2 \log (n) / n} + n^{-1/2}$
	and for 
	the events
	\begin{align*}
			\mathscr{E}_{j} 
		= 
			\{	\| \bX \bQ_{j} \bSigma^{-1/2} n^{-1/2} \|_{\op} 
			\leq 
				1 + \sqrt{ p / n } + \eta_{n}	
			\} 
		\cap 
			\{  \big| n^{-1/2} \Omega_{jj}^{1/2} \| \bz_{j} \|_{2} - 1 \big|
			\leq
				\eta_{n}
			\},
	\end{align*}
        we have
		$
				\P ( \cap_{j \in [p]} \mathscr{E}_{j} )
			\geq
				1 - (p + 1/2) n^{-1} (\pi \log(n))^{-1/2}
		$.

\end{lemma}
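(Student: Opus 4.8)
The plan is to prove the two assertions separately, reducing each to Gaussian concentration for $1$-Lipschitz functions of a matrix with i.i.d.\ standard normal entries. Under \Cref{as:feature} the matrix $\bX\bSigma^{-1/2}$ has i.i.d.\ $N(0,1)$ entries, since its rows are i.i.d.\ $N(\bzero,\bI_p)$. For \eqref{eq:bound-squared-operator-norm} I would combine two standard facts about $\|\bX\bSigma^{-1/2}\|_{\op}=\sigma_{\max}(\bX\bSigma^{-1/2})$: the Gaussian comparison (Gordon / Davidson--Szarek) inequality $\E[\|\bX\bSigma^{-1/2}\|_{\op}]\le\sqrt n+\sqrt p$, and the fact that the map sending a matrix to its operator norm is $1$-Lipschitz with respect to the Frobenius norm, so that the Gaussian Poincar\'e inequality yields $\mathrm{Var}(\|\bX\bSigma^{-1/2}\|_{\op})\le 1$. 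Then $\E[\|\bX\bSigma^{-1/2}\|_{\op}^2]=\mathrm{Var}(\|\bX\bSigma^{-1/2}\|_{\op})+\E[\|\bX\bSigma^{-1/2}\|_{\op}]^2\le 1+(\sqrt n+\sqrt p)^2$, which is \eqref{eq:bound-squared-operator-norm}.

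For the high-probability bound I would first re-express the two events defining $\mathscr{E}_j$ as events on the single Gaussian matrix $\bX\bSigma^{-1/2}$. Using $\bQ_j=\bI_p-\bSigma^{-1}\Omega_{jj}^{-1}\be_j\be_j^\top$ and $\|\bSigma^{-1/2}\be_j\|_2^2=\be_j^\top\bSigma^{-1}\be_j=\Omega_{jj}$, a direct computation gives
\begin{equation*}
\bX\bQ_j\bSigma^{-1/2}=(\bX\bSigma^{-1/2})\,\bP_j,
\qquad
\bP_j\defas\bI_p-\Omega_{jj}^{-1}(\bSigma^{-1/2}\be_j)(\bSigma^{-1/2}\be_j)^\top,
\end{equation*}
where $\bP_j$ is the orthogonal projection onto the $(p-1)$-dimensional orthogonal complement of $\bSigma^{-1/2}\be_j$. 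By rotational invariance of the Gaussian distribution, $\|\bX\bQ_j\bSigma^{-1/2}\|_{\op}$ has the law of the largest singular value of an $n\times(p-1)$ matrix with i.i.d.\ $N(0,1)$ entries. Likewise, with the unit vector $\bv_j\defas\Omega_{jj}^{-1/2}\bSigma^{-1/2}\be_j$ one finds $\Omega_{jj}^{1/2}\bz_j=(\bX\bSigma^{-1/2})\bv_j\sim N(\bzero,\bI_n)$, so that $\Omega_{jj}^{1/2}\|\bz_j\|_2$ is $\chi_n$-distributed for every $j$.

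With these reductions I would apply Gaussian concentration for the two $1$-Lipschitz maps $\|\cdot\|_{\op}$ and $\|\cdot\|_2$ in the form $\P(f\ge\mathrm{med}(f)+s)\le 1-\Phi(s)$, bounding the medians by the corresponding means $\sqrt n+\sqrt{p-1}\le\sqrt n+\sqrt p$ and $\le\sqrt n$, together with the Gaussian tail bound $1-\Phi(s)\le (s\sqrt{2\pi})^{-1}e^{-s^2/2}$. The choice $\eta_n=\sqrt{2\log(n)/n}+n^{-1/2}$ makes the relevant deviation equal to $\sqrt n\,\eta_n=\sqrt{2\log n}+1$, and the key arithmetic identity is $1-\Phi(\sqrt{2\log n})\le\tfrac12\,n^{-1}(\pi\log n)^{-1/2}$. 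A union bound over the $p$ operator-norm deviation events, together with the (two-sided) $\chi_n$ deviation events whose probabilities are of strictly smaller order because these variables are $n$-dimensional and concentrate on the tighter scale $n^{-1/2}$, then controls $\P(\cup_{j}\mathscr{E}_j^c)$ and yields $\P(\cap_{j\in[p]}\mathscr{E}_j)\ge 1-(p+\tfrac12)\,n^{-1}(\pi\log n)^{-1/2}$.

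I expect the crux to be the pair of distributional reductions in the second paragraph: verifying that $\bSigma^{1/2}\bQ_j\bSigma^{-1/2}=\bP_j$ is exactly a rank-$(p-1)$ orthogonal projection, and that the resulting objects are standard Gaussian of the stated dimensions (the independence of $\bz_j$ and $\bX\bQ_j$ from \Cref{prop:indep_zj} underpins this). The remaining work is careful book-keeping of constants: controlling the median-versus-mean gap, handling the two-sided nature of the $\chi_n$ deviation, and evaluating $1-\Phi(\sqrt{2\log n})$ precisely, so that the union bound collapses to the clean constant $p+\tfrac12$ rather than a looser multiple.
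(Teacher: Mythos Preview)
Your argument for \eqref{eq:bound-squared-operator-norm} is correct and matches the paper's.

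For the high-probability bound your accounting has a genuine gap. The claim that the two-sided $\chi_n$ deviation events are ``of strictly smaller order'' is false: $\Omega_{jj}^{1/2}\|\bz_j\|_2$ and $\|\bX\bQ_j\bSigma^{-1/2}\|_{\op}$ are both $1$-Lipschitz functionals of the same $n\times p$ standard Gaussian matrix, so Gaussian concentration gives the identical tail bound $\Phi(-s)$ at deviation $s$; there is no tighter scale for the $\chi_n$ variables. If you carry out the union bound you describe---$p$ one-sided operator-norm events plus $p$ two-sided $\chi_n$ events---you obtain $3p\,\Phi(-\sqrt{2\log n})$ and hence a constant $\tfrac{3p}{2}$, not the claimed $p+\tfrac12$.

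The paper's route exploits precisely the projection structure you identified, but in a different way. Since $\|\bP_j\|_{\op}\le 1$ (equivalently $\|\bSigma^{1/2}\bQ_j\bSigma^{-1/2}\|_{\op}\le 1$), one has $\|\bX\bQ_j\bSigma^{-1/2}\|_{\op}\le\|\bX\bSigma^{-1/2}\|_{\op}$ simultaneously for every $j\in[p]$. All $p$ operator-norm constraints are therefore implied by the \emph{single} event $\{\|\bX\bSigma^{-1/2}n^{-1/2}\|_{\op}\le 1+\sqrt{p/n}+\eta_n\}$, and the union bound covers $1+2p$ failure events rather than $3p$, yielding $(2p+1)\Phi(-\sqrt{2\log n})\le(p+\tfrac12)\,n^{-1}(\pi\log n)^{-1/2}$ exactly. (The paper also applies the Davidson--Szarek deviation inequality directly around $\sqrt n+\sqrt p$, avoiding the median-versus-mean comparison you invoke, which would otherwise require a separate justification.)
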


\begin{proof} [of Lemma \ref{lem:high_p_Ej}] 
	Let us first notice that $\bX\bSigma^{-1/2}$ is a random Gaussian matrix with iid standard normal entries.
	Theorem 7.3.1 in \cite{Ver18} provides that
	$\E [ \| \bX \bSigma^{-1/2} \|_{\op} ] \leq \sqrt{n} + \sqrt{p}$.
	Since the operator norm of a matrix is a 1-Lipschitz function of the entries of the matrix,
	by the Gaussian Poincar\'e inequality
        \cite[Theorem 3.20]{BLM13},
	$
		\Var ( \| \bX \bSigma^{-1/2} \|_{\op}  ) \leq 1.
	$
        This proves \eqref{eq:bound-squared-operator-norm}.

	By Theorem II.13 in \cite{DS01}, 
	we have for $ t > 0 $,
	\[
		\P ( \| \bX \bSigma^{-1/2} n^{-1/2} \|_{\op} \geq 1 + \sqrt{ p / n } + t ) 
	\leq \Phi ( - t \sqrt{n} ), 
	\]
	\[ 
		\P ( \big| \Omega_{jj}^{1/2} \| \bz_{j} n^{-1/2} \|_{2} - 1 \big| \geq n^{-1/2} + t )
		\leq
		2 \Phi ( - t \sqrt{n}).
	\]	
	Since $\bX \bQ_{j} \bSigma^{-1/2} = \bX \bSigma^{-1/2} ( \bSigma^{1/2} \bQ_{j} \bSigma^{-1/2} )$
	and $ \| \bSigma^{1/2} \bQ_{j} \bSigma^{-1/2} \|_{\op} \leq 1 $, 
	$ \| \bX \bQ_{j} \bSigma^{-1/2} \|_{\op} \leq \| \bX \bSigma^{-1/2} \|_{\op} $ for all $ j \in [p] $.
	With this we can conclude the above inequalities using a union bound over $ j \in [p] $	  when $t = \sqrt{ 2 \log (n) / n } $,  
	$ 
		\P ( \cap_{j \in [p]} \mathscr{E}_{j} )
		\geq 
		1 - (2p + 1) \Phi ( - \sqrt{ 2 \log (n) } ).
	$
	Thanks to 
        $ \Phi (-t) \leq (2\pi)^{-1/2} \exp ( - t^{2} / 2) / t $ for $t > 0$, 
	we have 
	\[ 
		\P ( \cap_{j \in [p]} \mathscr{E}_{j} )
		\geq
		1 - (p + 1/2) n^{-1} (\pi \log(n))^{-1/2}.
	\]
\end{proof}

\begin{remark}
	Our specific construction of $\mathscr{E}_{j}$ satisfies the following properties:

	\begin{enumerate}

		\item
			$I_{\mathscr{E} _{j}}$ is a function of $\| \bX \bQ_{j} \|_{\op}$ and $\| \bz_{j} \|_{2}$ only, 
			consequently the event $\mathscr{E}_j$ is independent of $\bz_{j}/ \| \bz_{j} \|_{2}$.

		\item 
		$
				\| \bX \bSigma^{-1/2} n^{-1/2} \|_{\op} I_{\mathscr{E}_{j}}
			\leq 
                            1 + \sqrt{p/n} + \eta_n
			+
                        (1 + \eta_n)
                        = 2 + \sqrt{p/n} + 2\eta_n
			.
		$

	\end{enumerate}

\end{remark}

\subsection{Twice continuously differentiable penalty}

\begin{lemma}
    \label[lemma]{lem:VNM}
 Let $L,\tau$ be such that
Assumptions \ref{as:rho} and \ref{as:g} are fulfilled.
Further assume that the Hessian matrix 
$\bG = (\nabla^{2} g(\bb))\big|_{\bb=\hbbeta}$ of $g$ at $\hbbeta$ exists and is symmetric, and define 
\begin{align}
    \bM &= (\bX^{\top} \diag(\bpsi') \bX + n\bG)^{-1},
    \\
    \bV &= \diag(\bpsi') - \diag(\bpsi') \bX \bM \bX^{\top} \diag(\bpsi'). 
\end{align}
Then with the partial order $\bA \preceq \bB$ if and only if the matrix $\bB - \bA$ is positive semi-definite, 
    we have
	\begin{align}
		\label{eq:DXM-bound}
                \| \diag(\bpsi') \bX \bM \|_{\op} & \leq (1/2) L^{1/2} (n \tau)^{-1/2},
	\\
		\label{eq:M-bound}
                \bM & \preceq (n\tau)^{-1} \bI_{p}, 
	\\
		\label{eq:V-bound}
		( L \tau^{-1} \| \bX n^{-1/2} \|_{\op}^{2} 
		+ 1 )^{-1} \diag(\bpsi') & 
		\preceq \bV 
		\preceq \diag(\bpsi') 
		\preceq L \bI_{n}.
	\end{align}

\end{lemma}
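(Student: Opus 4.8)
The plan is to establish the three displays in order, starting from the purely structural consequences of the two assumptions. Since $\rho$ is convex with $L$-Lipschitz derivative $\psi$, one has $0\le\psi'\le L$ wherever defined, so $\diag(\bpsi')$ is symmetric positive semi-definite with $\diag(\bpsi')\preceq L\bI_n$, which is the rightmost inequality of \eqref{eq:V-bound}. Strong convexity of $g$ gives $\bG\succeq\tau\bI_p$, hence $\bM^{-1}=\bX^\top\diag(\bpsi')\bX+n\bG\succeq n\tau\bI_p\succ 0$. In particular $\bM$ is well defined and symmetric positive definite, and inverting this Loewner inequality yields $\bM\preceq(n\tau)^{-1}\bI_p$, i.e.\ \eqref{eq:M-bound}. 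The upper bound $\bV\preceq\diag(\bpsi')$ in \eqref{eq:V-bound} is then immediate, since $\diag(\bpsi')\bX\bM\bX^\top\diag(\bpsi')=(\diag(\bpsi')\bX)\bM(\diag(\bpsi')\bX)^\top\succeq 0$.

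For \eqref{eq:DXM-bound} I would pass to the variational formula, using the substitution $\bw=\bM\bv$ (a bijection as $\bM$ is invertible),
$$
\|\diag(\bpsi')\bX\bM\|_{\op}
=\sup_{\bw\ne\bzero}\frac{\|\diag(\bpsi')\bX\bw\|_2}{\|\bM^{-1}\bw\|_2}.
$$
Writing $\bW=\diag(\bpsi')^{1/2}\bX$, the numerator is at most $L^{1/2}\|\bW\bw\|_2$ because $\|\diag(\bpsi')^{1/2}\|_{\op}\le L^{1/2}$, while Cauchy--Schwarz together with $\bG\succeq\tau\bI_p$ gives
$$
\|\bM^{-1}\bw\|_2\ge\frac{\bw^\top\bM^{-1}\bw}{\|\bw\|_2}
=\frac{\|\bW\bw\|_2^2+n\,\bw^\top\bG\bw}{\|\bw\|_2}
\ge\frac{\|\bW\bw\|_2^2+n\tau\|\bw\|_2^2}{\|\bw\|_2}.
$$
Setting $t=\|\bW\bw\|_2$ and $s=\|\bw\|_2$, the ratio is bounded by $L^{1/2}ts/(t^2+n\tau s^2)$, and the elementary inequality $t^2+n\tau s^2\ge 2\sqrt{n\tau}\,ts$ supplies exactly the factor $1/2$, yielding $\|\diag(\bpsi')\bX\bM\|_{\op}\le\tfrac12 L^{1/2}(n\tau)^{-1/2}$.

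The remaining lower bound in \eqref{eq:V-bound} is where the real work lies. I would factor $\bV=\diag(\bpsi')^{1/2}\bigl(\bI_n-\bW\bM\bW^\top\bigr)\diag(\bpsi')^{1/2}$, so that it suffices to lower bound $\bI_n-\bW\bM\bW^\top$ in the Loewner order and then conjugate by $\diag(\bpsi')^{1/2}$. The key algebraic step is the push-through (Woodbury) identity
$$
\bI_n-\bW(\bW^\top\bW+n\bG)^{-1}\bW^\top=(\bI_n+\bH)^{-1},
\qquad \bH:=\bW(n\bG)^{-1}\bW^\top\succeq 0,
$$
which I would verify via the substitution $\bW\mapsto\bW(n\bG)^{-1/2}$ and the standard resolvent identity $\bA(\bA^\top\bA+\bI)^{-1}\bA^\top=\bI-(\bI+\bA\bA^\top)^{-1}$. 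Then $(n\bG)^{-1}\preceq(n\tau)^{-1}\bI_p$ and $\bX\bX^\top\preceq\|\bX\|_{\op}^2\bI_n$ give $\bH\preceq(n\tau)^{-1}\|\bX\|_{\op}^2\diag(\bpsi')\preceq L\tau^{-1}\|\bX n^{-1/2}\|_{\op}^2\bI_n$, hence $(\bI_n+\bH)^{-1}\succeq(L\tau^{-1}\|\bX n^{-1/2}\|_{\op}^2+1)^{-1}\bI_n$; conjugating by $\diag(\bpsi')^{1/2}$ converts the $\bI_n$ on the right into $\diag(\bpsi')$ and delivers the claimed lower bound.

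The main obstacle is recognizing and proving the resolvent identity $\bI_n-\bW\bM\bW^\top=(\bI_n+\bH)^{-1}$: without reducing $\bV$ to the inverse $(\bI_n+\bH)^{-1}$, a naive bound does not reproduce the sharp constant $(L\tau^{-1}\|\bX n^{-1/2}\|_{\op}^2+1)^{-1}$. The only other delicate point is the arithmetic--geometric mean step that produces the factor $1/2$ in \eqref{eq:DXM-bound}; everything else is routine manipulation of the Loewner order.
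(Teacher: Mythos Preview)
Your proof is correct and follows essentially the same route as the paper. The only cosmetic difference is packaging: the paper introduces the single auxiliary matrix $\bB=\diag(\bpsi')^{1/2}\bX n^{-1/2}\bG^{-1/2}$ and reads all three bounds directly from the singular values of $\bB$ (using $\max_{t\ge 0}t/(t^2+1)=1/2$ for \eqref{eq:DXM-bound} and that the eigenvalues of $\bI_n-\bB(\bB^\top\bB+\bI_p)^{-1}\bB^\top$ are $1/(1+\sigma_i^2)$ for the lower bound in \eqref{eq:V-bound}), whereas you work with $\bW=\diag(\bpsi')^{1/2}\bX$, use a variational/AM--GM argument for \eqref{eq:DXM-bound}, and invoke Woodbury explicitly for the lower bound in \eqref{eq:V-bound}---the same computation under a different change of variables.
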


\begin{proof} [of Lemma \ref{lem:VNM}]
	Throughout the proof we denote by
	$\bB = \diag(\bpsi')^{1/2} \bX n^{- 1/2} \bG^{-1/2}$.
	By some algebra, we have
	$\bB (\bB^\top \bB + \bI_{p})^{-1} \bB^\top = \diag(\bpsi')^{1/2}\bX \bM \bX^{\top} \diag(\bpsi') ^{1/2}$. 
	For an upper bound of $\diag(\bpsi') \bX \bM$,
	we notice 
        \begin{align*}
		\diag(\bpsi') \bX \bM
                &= 
		\diag(\bpsi') \bX (\bX^\top \diag(\bpsi') \bX + n \bG)^{-1} 
                \\&= 
		\diag(\bpsi')^{1/2} \bB (\bB^\top \bB + \bI_p)^{-1} n^{-1/2}\bG^{-1/2}.
        \end{align*}
	For any matrix $\bB$,
	$\| \bB ( \bB^{\top} \bB + \bI_{p} )^{-1} \|_{\op} \leq
            \max_{t\ge0}t/(t^2+1)
        =
        1/2$. 
	By strongly convexity of $g$, 
	$ \| \bG^{-1/2} \|_{\op} \leq \tau^{-1/2} $.
	Since $\psi$ is $L$-Lipschitz, 
	$ \| \diag(\bpsi')^{1/2} \|_{\op} \leq L^{1/2} $.
	Combining those upper bounds, we obtain
	\eqref{eq:DXM-bound}.
        For the upper bound of $\bM$, 
	we notice 
	$
	\bM 
	 = n^{-1} \bG^{-1/2} (\bB^\top \bB + \bI_p)^{-1} \bG^{-1/2}
	 \preceq n^{-1} \bG^{-1}
         \preceq (n\tau)^{-1} \bI_p.
	$
	This gives \eqref{eq:M-bound}.
	For lower and upper bounds on $\bV$,
	we first notice that 
	by definition of $\bB$, 
	$
			\| \bB \|_{\op}
		\leq
			L^{1/2} 
			\tau^{-1/2}
			\| \bX n^{-1/2} \|_{\op} 
		,
	$
	thus
	\[
			(
				L\tau^{-1}
				\|\bX n^{-1/2}\|_{\op}^{2}
			+
				1
			)^{-1} \bI_{n}
		\preceq
			\bI_{n}-\bB(\bB^{\top}\bB+\bI_{p})^{-1}\bB^{\top}
		\preceq
			\bI_{n}
		.
	\]
	Since $\bV = \diag(\bpsi')^{1/2} (\bI_{n} - \bB(\bB^{\top}\bB + \bI_{p})^{-1}\bB^{\top}) \diag(\bpsi')^{1/2}$,
	we have 
	\[
		( L \tau^{-1} \| \bX n^{-1/2} \|_{\op}^{2} 
		+ 1 )^{-1}
		\diag(\bpsi') 
		\preceq \bV 
		\preceq \diag(\bpsi').
	\] 
	By the $L$-Lipschitz property of $\psi$, we have $\diag(\bpsi') \preceq L \bI_{n}$.
	Thus \eqref{eq:V-bound} holds. 
\end{proof}

\begin{proposition} \label[proposition]{prop:calc_deri}
    Assume that $g$ is strongly convex with 
    parameter $\tau > 0$ and $\psi=\rho'$ is $L$-Lipschitz. 
    Let $\hbbeta = \hbbeta(\bep,\bX)$ be as in \eqref{eq:bbeta}.  
    Let $\bpsi(\bep,\bX)=\bpsi$, $\bpsi'$ and $\bh(\bep,\bX)=\hbbeta-\bbeta$ be as in \eqref{eq:bpsi}. 
    Define $\nabla_{\bz}\bh = (\partial/\partial \bz)\bh(\bep,\bX+\bz\ba^\top)\big|_{\bz=0}$ 
    and $\nabla_{\bz}\bpsi = (\partial/\partial \bz)\bpsi(\bep,\bX+\bz\ba^\top)\big|_{\bz=0}$ 
    for fixed $\ba$, $\bep$ and $\bX$. 
    Let $\bP_{\bx}^\perp = \bI_n - \bx \bx^{\top} / \| \bx \|_2^2$ for $\bx\neq {\bf 0}$.
    Then

    (i) For fixed $\bep$, $\bh(\bep,\bX)$ and $\bpsi(\bep,\bX)$ are
    Lipschitz in $\bX$ 
    on every compact subset of $\R^{n\times p}$. 

    For (ii) and (iii), additionally assume that $g$ is twice continuously differentiable.

    (ii) 
    For almost every $\bX$ and every $\ba \in \R^{p}$,
    \begin{align}
    &    (\nabla_{\bz}\bh)^\top =
        - (\bh^{\top} \ba) \bM \bX^{\top} \diag(\bpsi') + \bM \ba \bpsi^{\top},
        \label{eq:gradient_hbeta}
    \\
    &  (\nabla_{\bz}\bpsi)^\top =
        -
        (\bh^{\top} \ba) \bV
        -
        \diag(\bpsi') \bX \bM \ba \bpsi^\top.
        \label{eq:gradient_bpsi}
    \end{align}
    
    (iii) For almost every $\bX$ and every $\ba \in \R^{p}$, 
	if $\bpsi \ne \mathbf{0}$,
	then 
	\begin{equation} \label{eq:gradient_bpsi_normalized}
	(\nabla_{\bz}(\bpsi/\|\bpsi\|_2))^\top =
	\|\bpsi\|_2^{-1}\bP^\perp_{\bpsi} \big[
		-
		(\bh^{\top} \ba) \bV
		-
		\diag(\bpsi') \bX \bM \ba \bpsi^\top
	\big] 
	\end{equation}
\end{proposition}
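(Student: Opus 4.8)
The plan is to obtain part (ii) by implicit differentiation of the stationarity condition, and then to read off part (i) (by a strong-convexity stability argument that does not require differentiability) and part (iii) (by the chain rule for normalization).

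For part (i), fix $\bep$ and note that $\bb\mapsto \frac1n\sum_{i}\rho(\ep_i-\bx_i^\top(\bb-\bbeta))+g(\bb)$ is $\tau$-strongly convex, since the loss is convex ($\rho$ is) and $g$ is $\tau$-strongly convex. First I would record that $\hbbeta(\bep,\bX)$ exists, is unique, and is uniformly bounded for $\bX$ in any compact $\mathcal{K}\subset\R^{n\times p}$: convexity of $\rho$ yields a linear-in-$\bb$ lower bound on the loss, which together with the quadratic growth of $g$ from strong convexity makes the objective coercive uniformly over $\mathcal{K}$. For two designs $\bX_1,\bX_2\in\mathcal{K}$ the stationarity conditions read $\frac1n\bX_k^\top\bpsi_k\in\partial g(\hbbeta_k)$; plugging these into the monotonicity characterization \eqref{strongly-convex-2} of strong convexity and then adding and subtracting the loss gradient of design $\bX_1$ at the common point $\hbbeta_2$ reduces $\|\hbbeta_1-\hbbeta_2\|_2$ to $\tau^{-1}$ times the difference of the two loss gradients at $\hbbeta_2$. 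Since $\hbbeta_2$ is bounded on $\mathcal{K}$ and $\psi$ is $L$-Lipschitz, $\bX\mapsto \bX^\top\psi(\bep-\bX(\hbbeta_2-\bbeta))$ is Lipschitz on $\mathcal{K}$, giving the claimed Lipschitz bound for $\bh$; Lipschitzness of $\bpsi=\psi(\bep-\bX\bh)$ then follows by composing the $L$-Lipschitz $\psi$ with $\bX\mapsto \bX\bh(\bep,\bX)$, a product of bounded Lipschitz factors.

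For part (ii), assume $g\in C^2$ and differentiate the stationarity condition $\frac1n\tbX^\top\psi(\bep-\tbX\bh)=\nabla g(\bbeta+\bh)$ along $\tbX=\bX+\bz\ba^\top$ at $\bz=\bzero$. Using $d\tbX=(d\bz)\ba^\top$ and $d\bpsi=\diag(\bpsi')\bigl(-(\bh^\top\ba)\,d\bz-\bX\,d\bh\bigr)$ and collecting terms gives
\[
(\bpsi^\top d\bz)\,\ba-(\bh^\top\ba)\bX^\top\diag(\bpsi')\,d\bz
=\bigl(\bX^\top\diag(\bpsi')\bX+n\bG\bigr)\,d\bh .
\]
Strong convexity of $g$ makes the matrix on the right $\succeq n\tau\bI_p$, hence invertible with inverse $\bM$, so $d\bh=\bM\bigl[\ba\bpsi^\top-(\bh^\top\ba)\bX^\top\diag(\bpsi')\bigr]d\bz$, which is precisely \eqref{eq:gradient_hbeta} read as the Jacobian $(\nabla_{\bz}\bh)^\top$. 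Substituting this back into $d\bpsi$ and recognizing $\diag(\bpsi')-\diag(\bpsi')\bX\bM\bX^\top\diag(\bpsi')=\bV$ collapses the two $(\bh^\top\ba)$ contributions into $-(\bh^\top\ba)\bV$ and produces \eqref{eq:gradient_bpsi}.

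The main obstacle is justifying this formal computation when $\psi$ is only $L$-Lipschitz, so that $\psi'$ exists merely Lebesgue-almost-everywhere and the chain rule $d\bpsi=\diag(\bpsi')\,d(\bep-\tbX\bh)$ is not automatic; this is exactly where the qualifier ``for almost every $\bX$'' is needed. By part (i) the solution map is Lipschitz, hence by Rademacher's theorem differentiable at almost every $\bX$, and off a further Lebesgue-null set the residuals $\bep-\bX\bh$ avoid the null set where $\psi$ fails to be differentiable, so the differentiation above is legitimate. I would make this rigorous by a mollification argument: replace $\psi$ by smooth, nondecreasing, $L$-Lipschitz approximations $\psi_m$, for which the classical implicit function theorem applies verbatim (the Jacobian $\frac1n\bM_m^{-1}$ being invertible) and yields \eqref{eq:gradient_hbeta}--\eqref{eq:gradient_bpsi} with subscript $m$, and then pass to the limit using that $\hbbeta_m\to\hbbeta$ and $\bpsi_m\to\bpsi$ by strong-convexity stability. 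Finally, part (iii) is immediate: since $\bpsi\neq\bzero$, the map $\bv\mapsto\bv/\|\bv\|_2$ has Jacobian $\|\bv\|_2^{-1}\bP_{\bv}^\perp$, so the chain rule combined with \eqref{eq:gradient_bpsi} gives $(\nabla_{\bz}(\bpsi/\|\bpsi\|_2))^\top=\|\bpsi\|_2^{-1}\bP_{\bpsi}^\perp(\nabla_{\bz}\bpsi)^\top$, which is \eqref{eq:gradient_bpsi_normalized}.
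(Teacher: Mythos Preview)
Your formal computation in (ii) and the chain-rule derivation of (iii) coincide exactly with the paper's, and your stability argument for (i) is in the same spirit as the paper's (the paper packages it as the single inequality
\[
n\tau\|\bh-\tbh\|_2^2 + L^{-1}\|\bpsi-\tbpsi\|_2^2
\le (\bh-\tbh)^\top(\bX-\tbX)^\top\bpsi + (\bep-\tbep+\tbX\bh-\bX\bh)^\top(\bpsi-\tbpsi),
\]
obtained by combining the KKT conditions with strong convexity of $g$ and monotonicity/Lipschitzness of $\psi$, from which local Lipschitzness follows by Cauchy--Schwarz).

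The one substantive difference is how you justify the chain rule $d\bpsi=\diag(\bpsi')\,d(\bep-\tbX\bh)$ when $\psi'$ exists only a.e. Your first suggestion, that ``off a further null set the residuals $\bep-\bX\bh$ avoid the null set of $\psi'$'', is not obviously true: the residuals depend on $\bX$ through $\bh(\bep,\bX)$ in a complicated way, and there is no reason a priori that the preimage of a Lebesgue-null set under this map is null in $\bX$. Your fallback mollification argument can be made to work, but the sketch as written has a gap: pointwise convergence $\hbbeta_m\to\hbbeta$, $\bpsi_m\to\bpsi$ does not by itself yield convergence of the Jacobians; you would need to combine uniform Lipschitz bounds (giving weak-$*$ convergence of $\nabla\bh_m$) with a.e.\ convergence of the explicit right-hand sides (which in turn requires controlling where $\psi_m'\to\psi'$) to identify the limit.

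The paper avoids both issues by invoking directly the a.e.\ chain rule for compositions of Lipschitz functions \cite[Theorem~2.1.11]{Zie89}: once (i) gives that $(\bep,\bX)\mapsto(\bh,\bpsi)$ is locally Lipschitz, Rademacher gives a.e.\ Fr\'echet differentiability, and the cited chain rule lets you differentiate the KKT identity $\bX^\top\bpsi=n\nabla g(\bbeta+\bh)$ and the composition $\bpsi=\psi(\bep-\bX\bh)$ at almost every $\bX$ without having to track where $\psi'$ fails to exist. This is both shorter and cleaner than mollification.
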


We remark that in view of \eqref{bpsi-bz_j}, $\nabla_{\bz} \bpsi = \nabla \bpsi(\bz_j)$ when $\ba=\bfe_j$.

\begin{proof}
    (i)
	We refer our readers to the proof of Proposition \ref{prop:Lipschitz_in_X}.

    (ii)
    As the functions $\bh$ and $\bpsi$ 
    are Lipschitz on every compact, their Fr\'echet derivatives
    exist almost everywhere by Rademacher's theorem.
    Moreover, the chain rule holds
    almost everywhere
    by \cite[Theorem 2.1.11]{Zie89}. 
    Let $\bG, \bV$ and $\bM$ be as in \Cref{lem:VNM}.  
    By differentiating these KKT conditions $\bX^\top \bpsi(\bX) = n (\nabla g(\bb))\big|_{\bb=\bbeta+\bh}$,
    and by the chain rule,
    we obtain that
    for almost every $\bX$, 
\begin{align*}
        n\bG\big(\nabla_{\bz}\bh\big)^\top
        = \ba \bpsi^\top + \bX^\top\big(\nabla_{\bz}\bpsi\big)^\top,\quad 
        \big(\nabla_{\bz}\bpsi\big)^\top = \diag(\bpsi')\big\{ - (\ba^\top\bh) \bI_n
        - \bX\big(\nabla_{\bz}\bh\big)^\top\big\}. 
\end{align*}
    Solving the above system of equations 
    gives \eqref{eq:gradient_hbeta} and \eqref{eq:gradient_bpsi}.

    (iii)
	Since the map 
	$\bv \mapsto \bv / \| \bv \|_2$ with $\bv \in \R^{n}$
	has Fr\'echet derivative 
	$\| \bv \|_2^{-1}\bP_{\bv}^\perp$ 
	at every point $\bv \neq \bzero \in \R^{n}$,
	by chain rule, \eqref{eq:gradient_bpsi_normalized} follows for almost every $\bX$ 
	if $\bpsi(\bX) \neq \bzero_{n}$.
\end{proof}


\begin{proof}[of \Cref{lem:inter_step_zj}
    when $g$ is twice continuously differentiable]
Here, we further assume that $g$ is twice differentiable 
so that $\bV, \bM$ in \Cref{lem:VNM} are well-defined.

By Proposition \ref{prop:Lipschitz_in_X}, 
the map $\bz_{j} \mapsto (\bh, \bpsi)$ is locally Lipschitz,
thus the map of the product $\bz_{j} \mapsto h_j \bpsi ( \| \bpsi \|_2^{2} + \delta )^{-1}$
is also locally Lipschitz.
By Proposition \ref{prop:psi_not_zero},
without loss of generality, we can assume that 
$\bpsi \neq \bzero_{n}$ at almost every point $\bz_{j} \in \R^{n}$.
By the first order Stein's formula on the sphere 
\eqref{eq:Sphere-First-Order-Stein}
for $(n-1)\|\bz_{j}\|_{2}^{-2}\E_{j}[\bz_{j}^{\top}\bff(\bz_{j})]$ 
with $\bff(\bz_{j})=h_{j}\bpsi(\|\bpsi\|_2^{2}+\delta)^{-1}$, 
we have
	\begin{align}
	 & \E_{j}[h_{j}^{2}(\|\bpsi\|_{2}^{2}+\delta)^{-1}\trace(\bV\bP_{\bz_{j}}^{\perp})]\label{eq:3254}\\
	= & -\E_{j}[h_{j}(\|\bpsi\|_{2}^{2}+\delta)^{-1}\bpsi^{\top}\bz_{j}](n-1)\|\bz_{j}\|_{2}^{-2}\label{eq:3254_term1}\\
	 & -2\E_{j}[h_{j}(\|\bpsi\|_{2}^{2}+\delta)^{-1}\be_{j}^{\top}(\diag(\bpsi')\bX\bM)^{\top}\bP_{\bz_{j}}^{\perp}\bpsi]\label{eq:3254_term2}\\
	 & +\E_{j}[(\|\bpsi\|_{2}^{2}+\delta)^{-1}\be_{j}^{\top}\bM\be_{j}\bpsi^{\top}\bP_{\bz_{j}}^{\perp}\bpsi]\label{eq:3254_term3}\\
	 & +2\E_{j}[h_{j}^{2}(\|\bpsi\|_{2}^{2}+\delta)^{-2}\bpsi^{\top}\bV\bP_{\bz_{j}}^{\perp}\bpsi]\label{eq:3254_term4}\\
	 & +2\E_{j}[h_{j}(\|\bpsi\|_{2}^{2}+\delta)^{-2}\bpsi^{\top}\diag(\bpsi')\bX\bM\be_{j}\bpsi^{\top}\bP_{\bz_{j}}^{\perp}\bpsi]\label{eq:3254_term5}.  
	\end{align}
For the terms (\ref{eq:3254_term2})-(\ref{eq:3254_term4}),
by Lemma \ref{lem:VNM} and $\|\bP_{\bz_{j}}^{\perp}\|_{{\rm op}}\leq1$
we find
\begin{align*}
    \eqref{eq:3254_term2} \vee \eqref{eq:3254_term5}
    & \leq\sqrt{L/(\tau n)}\E_{j}[|h_{j}|(\|\bpsi\|_{2}^{2}+\delta)^{-1/2}]
    && \text{by \eqref{eq:DXM-bound}}
    \\
    & \leq\tfrac{1}{2}(\tau n)^{-1}+\tfrac{L}{2}\E_{j}[h_{j}^{2}(\|\bpsi\|_{2}^{2}+\delta)^{-1}],\\
    \eqref{eq:3254_term3} & \leq(\tau n)^{-1}
                          && \text{by \eqref{eq:M-bound}}
,\\
\eqref{eq:3254_term4} & \leq2L\E_{j}[h_{j}^{2}(\|\bpsi\|_{2}^{2}+\delta)^{-1}]
                      && \text{by \eqref{eq:V-bound}}.
\end{align*}
By leaving term (\ref{eq:3254_term1}) unchanged and using the above inequalities,
\begin{equation} \label{eq:8508}
	\begin{aligned}
			\eqref{eq:3254}
		\leq 
			& -(n-1)\|\bz_{j}\|_{2}^{-2}\E_{j}[h_{j}(\|\bpsi\|_{2}^{2}+\delta)^{-1}\bpsi^{\top}\bz_{j}]
			\\
			& +2(\tau n)^{-1}
			\\
			& +3L\E_{j}[h_{j}^{2}(\|\bpsi\|_{2}^{2}+\delta)^{-1}]
		 .
	\end{aligned}
\end{equation}

	We now bound \eqref{eq:3254} from below. 
	Let
	\[
		U_{j}
	=
		(L\tau^{-1}(\|\bX\bQ_{j}n^{-1/2}\|_{\op}
	+	\|\bz_{j}n^{-1/2}\|_{2})^{2}
	+	1)^{-1}.
	\]
	By 
	$
		(L \tau^{-1} \| \bX n^{-1/2}\|_{\op}^{2}+1)^{-1}\diag(\bpsi')
	\preceq \bV
	\preceq \diag(\bpsi')
	\preceq L\bI_{n}
	$
	in \eqref{eq:V-bound} and by
	$
		\| \bX \|_{\op}
		\leq
		\| \bX \bQ_{j} \|_{\op}
		+ 
		\| \bz_{j} \|_{2}
	$
	for all $j\in[p]$, 
	we have that $U_{j}\diag(\bpsi')\preceq\bV\preceq L\bI_{n}$.

Since $U_j \diag(\bpsi')  \preceq \bV$ and
$K^2 \bI_n      \preceq \diag(\bpsi') + \diag\{\psi_i^2 ,i=1,...,n\}$
both holds,
multiplying the second inequality by $U_j$ and summing yields
$K^2U_j \bI_n \preceq U_j \diag \psi_i^2  +  \bV$.
Multiplying both sides $\bP_{\bz_j}^\perp$ to the left and to the right
and using that $\bP_{\bz_j}^\perp \preceq \bI_n$ and $\trace \bP_{\bz_j}^\perp = n-1$ we find
$$K^2U_j \bP_{\bz_j}^{\perp} \preceq U_j\diag \psi_i^2  +  \bP_{\bz_j}^{\perp} \bV \bP_{\bz_j}^{\perp},
\qquad
    K^2U_j(n-1) \le U_j \|\bpsi\|^2 + \trace[\bP_{\bz_j}^{\perp} \bV].
$$
Multiplying by $(\|\bpsi\|_2^2 + \delta)^{-1} h_j^2$ and taking the conditional
expectation $\E_j$,
\begin{align}
    K^2 U_j (n-1)
    \E_j[
        (\|\bpsi\|_2^2 + \delta)^{-1}
        h_j^2
    ]
     &\le
     \E_j[h_j^2] + \E_j[\trace[\bP_{\bz_j}^{\perp} \bV] h_j^2
     (\|\bpsi\|_2^2 + \delta)^{-1}]
     \\&= \E_j[h_j^2] + \eqref{eq:3254}
     .
     \label{eq:lower_bound}
\end{align}
Note that by definition of $u_*$ and the events in
Lemma~\ref{lem:high_p_Ej}, we have when $u_{*} > 0$, 
	\begin{equation} \label{eq:Ej_requirement}
		\mathscr{E}_{j}	
		\subset
		\{ 
			n u_{*}
		\leq 
			K^{2} U_{j} (n-1) 
		- 3L
		\} 
		\cap
		\{ 
			\Omega_{jj} \| \bz_{j} \|_{2}^{2} \geq n (1 - \eta_{n})_{+}^{2}
		\}.
	\end{equation}
	Then combining \eqref{eq:8508} with the previous display,
	multiplying both sides by $I_{\mathscr{E}_j}$ and summing over $j\in[p]$ we find
	\[
			n u_*
			\sum_{j=1}^p
			\E_j\Bigl[
                        \frac{
                            I_{\mathscr{E}_j}
                            h_j^2
                            }{
				\|\bpsi\|_2^2 + \delta
                            }
				\Bigr]
		\le
			\frac{2 p}{\tau n}
			+\sum_{j=1}^p
			\E_j[I_{\mathscr{E}_j} h_j^2]
			-(n-1)\E_j\Bigl[
                            \frac{ \bpsi^\top \bz_j
                            I_{\mathscr{E}_j}
                            \be_j^\top \bh
                            }{
                            (\|\bpsi\|_2^2 + \delta)
                            \|\bz_j\|_2^2
                        } 
			\Bigr].
	\]
	Taking expectations $ \E $, 
	letting $\diag\{I_{\mathscr{E}_{j}}\}$ denote the diagonal matrix 
	with the $j$-th diagonal element $I_{\mathscr{E}_{j}}$,
	we find
	\begin{align*}
	&n u_* \E\bigl[
			(\|\bpsi\|_2^2 + \delta)^{-1}
			\bh^\top \diag \{ I_{\mathscr{E}_j} \} \bh
	\bigr]
        -
        2p/(\tau n) - \E[\|\bh\|_2^2]
	\\&\le
	- 
        (n-1)
        \E\bigl[
		(\|\bpsi\|_2^2 + \delta)^{-1}
		\bpsi^\top \bX \bSigma^{-1}
		\diag \{ \Omega_{jj}^{-1} I_{\mathscr{E}_j} \|\bz_j\|_2^{-2} \} \bh
	\bigr]
	\\&\le
        (n-1)\E\bigl[ \|\bX\bSigma^{-1}
	\diag \{ \Omega_{jj}^{-1} I_{\mathscr{E}_j} \|\bz_j\|_{2}^{-2} \}\|_{\op}^2
	\bigr]^{1/2}
	\E\bigl[
		(\|\bpsi\|_2^2 + \delta)^{-1}
		\bh^\top \diag \{ I_{\mathscr{E}_j} \} \bh
	\bigr]^{1/2}
	\\&\le
	(1 - \eta_{n})_{+}^{-2} \E\bigl[ \|\bX\bSigma^{-1}\|_{\op}^2
	\bigr]^{1/2}
	\E\bigl[
		(\|\bpsi\|_2^2 + \delta)^{-1}
		\bh^\top \diag \{ I_{\mathscr{E}_j} \} \bh
	\bigr]^{1/2}
	\end{align*}
	where the second inequality follows from the Cauchy-Schwarz inequality
	and $(\|\bpsi\|_2^2 + \delta)^{-1/2}\|\bpsi\|_2 \le 1$,
	and the third inequality follows from
	$\Omega_{jj}^{-1} I_{\mathscr{E}_j} (n-1)\|\bz_j\|_2^{-2}
	\le (1 - \eta_{n})_{+}^{-2} $ thanks to \eqref{eq:Ej_requirement}.
	This implies that
	$x = (n\E[
    (\|\bpsi\|_2^2 + \delta)^{-1}
    \bh^\top \diag \{ I_{\mathscr{E}_j} \} \bh
])^{1/2}$ 
	satisfies
	$A x^2 + B x + C \le 0$
where the polynomial coefficients are
	$A=u_*$, $C=-2p(\tau n)^{-1} - \E[\|\bh\|_{2}^2] $ 
	and 
	$
	B =
	- ( 1 - \eta_{n} ) _{+}^{-2}
	\E [ \| \bX \bSigma^{-1} n^{-1} \|_{\op}^{2} ]^{1/2}.
	$
	As $A > 0$,
	inequality 
	$A x^2 + B x + C \le 0$
	implies that $x$ lies between the two real roots
	of the polynomial $AX^2 + BX + C$. In particular, $x$ is smaller than
	the largest root, i.e.,
	$
		x \le (-B + \sqrt{B^2 - 4AC})/(2A)
		\le |B|/A + (|C|/A)^{1/2}
	$.
	Here, 
	$
		|C|/A = (2p (n \tau)^{-1} + \E[\|\bh\|_{2}^2]) / u_*
	$ 
	and
	\[ 
			|B|/A
		\leq 
			u_{*}^{-1}
			( 1 - \eta_{n})_{+}^{-2} \big|
                        \| \bSigma^{-1/2 } \|_{\op}
			\E [ \| \bX \bSigma^{-1/2} n^{-1/2} \|_{\op}^{2} ] ^{1/2}.  
	\]
        The upper bound \eqref{eq:bound-squared-operator-norm}
        on $\E [ \| \bX \bSigma^{-1/2}\|_{\op}^{2} ]$ completes the proof.

\end{proof}

\subsection{Non-smooth strongly convex penalty $g$}

\subsubsection{Almost everywhere differentiability}
In this section, 
we provide the almost everywhere existence of the Jacobian matrices.
We also notice that if our penalty $g$ is not twice differentiable,
the matrices $\bM$ and $\bV$ in \Cref{lem:VNM} are
not well defined.
In this case we do not have explicit formula for the Jacobian matrices $( \partial / \partial \bz_{j}) \bpsi$ and $(\partial / \partial \bz_{j}) \bh $ such as those in terms of $\bV,\diag(\bpsi'),\bM$ in Proposition \ref{prop:calc_deri} (ii) and (iii).
In this section we provide upper bounds of the norms of these Jacobian matrices without using Proposition \ref{prop:calc_deri}.

\begin{proposition} \label[proposition]{prop:Lipschitz_in_X} 
Let $\rho: \R \to \R$ be convex
and continuously differentiable 
with derivative $\psi=\rho'$ being $L$-Lipschitz. 
Let $g: \R^{p} \to \R$ be strongly convex with parameter $\tau > 0$.
Let $\bX, \tbX \in \R^{n\times p}$, $\bep, \tbep \in \R^{n}$ and
correspondingly, 
\begin{equation}
\begin{aligned}
\hbbeta (\bep, \bX) & = \argmin_{\bb \in \R^{p}}
\sum _{i \in [n]} 
\frac{\rho ( \ep_{i}  - \bx_{i}^{\top} ( \bb - \bbeta ))}{n}
+
g ( \bb ),
\quad
                    &\tbbeta = \hbbeta (\tbep, \tbX),
\\
\bh (\bep, \bX) & = \hbbeta (\bep, \bX) - \bbeta, 
                &\tbh = \bh (\tbep, \tbX),
\\
\bpsi (\bep, \bX) & = \psi ( \bep - \bX \bh (\bep, \bX) ), 
                  &\tbpsi = \bpsi ( \tbep, \tbX). 
\end{aligned} 
\label{eq:formal-definition-bpsi-eps-X}
\end{equation}
Then (i)
\begin{equation} \label{eq:3692}
n\tau\| \bh-\tbh\| _{2}^{2}+L^{-1}\| \bpsi-\tbpsi\| _{2}^{2}
\leq(\bh-\tbh)^{\top}(\bX-\tbX)^{\top}\bpsi+(\bep-\tbep+\tbX\bh-\bX\bh)^{\top}(\bpsi-\tbpsi).
\end{equation}

(ii) The map
$(\bep,\bX) \mapsto \bigl(\bh(\bep,\bX),\bpsi(\bep,\bX)\bigr)$ is Lipschitz on every compact of $\R^{n} \times \R^{n \times p}$. 

(iii)
For any $\bep \in \R^{n}$ fixed,
and for any $\bfeta \in \R^{n}$ and $\ba \in \R^{p}$
\begin{align*}
    &n\tau\|\hbbeta(\bep,\bX+\bfeta\ba^{\top})-\hbbeta(\bep,\bX)\|_{2}^{2}  
    +L^{-1}\|\bpsi(\bep,\bX+\bfeta\ba^{\top})-\bpsi(\bep,\bX)\|_{2}^{2}                                                                             
    \\& \leq  (n\tau)^{-1}\|\ba\|_{2}^{2}(\bfeta^{\top}\bpsi)^{2}+L(\bh^{\top}\ba)^{2}\|\bfeta\|_{2}^{2}.
\end{align*}
Furthermore, 
\begin{equation} \label{eq:bound_fnorm_jacobian}
    \sum_{i\in[n]}\|\bpsi(\bep,\bX+\be_{i}\ba^{\top})-\bpsi(\bX)\|_{2}^{2}
	\leq
        L(n\tau)^{-1}\|\ba\|_{2}^{2}\|\bpsi\|_{2}^{2}+nL^{2}(\bh^{\top}\ba)^{2}.
\end{equation}

(iv) If $\bfeta\in\R^n$ is such that
$\bfeta^\top\bpsi(\bep,\bX)=0$ then
$\bpsi(\bep + \bh(\bep,\bX)^\top\be_j \bfeta, \bX + \bfeta\be_j^\top)=\bpsi(\bep,\bX)$,
so that if $\bpsi(\bep,\bX)$ is Fr\'echet differentiable at $(\bep,\bX)$ then
\begin{equation}
\sum_{i=1}^n
\eta_i
\Big[
    \frac{\partial \bpsi}{\partial x_{ij}}(\bep,\bX)
+
\Bigl(\be_j^\top \bh(\bep,\bX)\Bigr)
\frac{\partial \bpsi}{\partial \veps_i}(\bep,\bX)
\Big]
= 0 
\quad\text{ provided that }
\bfeta^\top\bpsi(\bep,\bX)=0.
\label{eq:divergence-intermediary}
\end{equation}
\end{proposition}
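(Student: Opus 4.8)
The plan is to first establish the exact invariance $\bpsi(\tbep,\tbX) = \bpsi(\bep,\bX)$, where $\tbep = \bep + (\be_j^\top\bh)\bfeta$ and $\tbX = \bX + \bfeta\be_j^\top$, and then to obtain \eqref{eq:divergence-intermediary} by differentiating this invariance along a line and invoking the chain rule. The heart of the matter is the invariance of the minimizer itself: I would show that $\hbbeta(\tbep,\tbX) = \hbbeta(\bep,\bX)$ whenever $\bfeta^\top\bpsi = 0$, after which the invariance of $\bpsi$ and of $\bh$ is immediate.

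To prove $\hbbeta(\tbep,\tbX)=\hbbeta(\bep,\bX)$, I would use that, under Assumption~\ref{as:g}, the perturbed objective $\bb\mapsto \tfrac1n\sum_{i}\rho\bigl([\tbep - \tbX(\bb-\bbeta)]_i\bigr) + g(\bb)$ is strongly convex, hence has a unique minimizer characterized by the stationarity condition $\tfrac1n\tbX^\top\psi(\tbep - \tbX(\bb-\bbeta)) \in \partial g(\bb)$. Evaluating at $\bb=\hbbeta := \hbbeta(\bep,\bX)$, so that $\bb-\bbeta=\bh$, the perturbed residual simplifies because $(\be_j^\top\bh)\bfeta - \bfeta\be_j^\top\bh = \mathbf{0}_n$, giving $\tbep - \tbX\bh = \bep - \bX\bh$ and therefore $\psi(\tbep - \tbX\bh) = \bpsi$. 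The stationarity left-hand side then reads $\tfrac1n\tbX^\top\bpsi = \tfrac1n\bX^\top\bpsi + \tfrac1n\be_j(\bfeta^\top\bpsi) = \tfrac1n\bX^\top\bpsi$, using the hypothesis $\bfeta^\top\bpsi=0$; and $\tfrac1n\bX^\top\bpsi \in \partial g(\hbbeta)$ is exactly the stationarity condition of the unperturbed problem. Hence $\hbbeta$ satisfies the perturbed stationarity condition and, by uniqueness, equals $\hbbeta(\tbep,\tbX)$. Consequently $\bh(\tbep,\tbX)=\bh$ and $\bpsi(\tbep,\tbX)=\psi(\tbep - \tbX\bh)=\psi(\bep - \bX\bh)=\bpsi(\bep,\bX)$, which is the first assertion of (iv).

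For the divergence identity, I would note that replacing $\bfeta$ by $s\bfeta$ preserves the hypothesis $(s\bfeta)^\top\bpsi=0$, so the invariance just proved yields $\bpsi\bigl(\bep + s(\be_j^\top\bh)\bfeta,\, \bX + s\bfeta\be_j^\top\bigr) = \bpsi(\bep,\bX)$ for every $s\in\R$; that is, $\bpsi$ is constant along the line through $(\bep,\bX)$ with tangent direction $\bigl((\be_j^\top\bh)\bfeta,\, \bfeta\be_j^\top\bigr)$. If $\bpsi$ is Fr\'echet differentiable at $(\bep,\bX)$, its directional derivative in this direction vanishes, and the chain rule gives $\sum_i \eta_i\bigl[\tfrac{\partial \bpsi}{\partial x_{ij}} + (\be_j^\top\bh)\tfrac{\partial \bpsi}{\partial \veps_i}\bigr]=\mathbf{0}_n$, which is \eqref{eq:divergence-intermediary}. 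The main obstacle is the first step: correctly handling the subgradient stationarity condition of the perturbed problem and verifying the two cancellations — that the residual at $\bb=\hbbeta$ is unchanged and that the rank-one update $\bfeta\be_j^\top$ contributes $\tfrac1n\be_j(\bfeta^\top\bpsi)=\mathbf{0}_n$ — while ensuring that $\bh$, and hence the scalar $\be_j^\top\bh$ defining the line direction, is genuinely frozen at its unperturbed value throughout the scaling in $s$.
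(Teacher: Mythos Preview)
Your argument for (iv) is correct, but it takes a different route from the paper. The paper proves the invariance $\tbpsi=\bpsi$ by plugging $\tbep = \bep + (\be_j^\top\bh)\bfeta$ and $\tbX=\bX+\bfeta\be_j^\top$ into the quantitative inequality \eqref{eq:3692} already established in part~(i): one checks $(\bX-\tbX)^\top\bpsi = -\be_j(\bfeta^\top\bpsi)=\mathbf{0}_p$ and $\bep-\tbep+(\tbX-\bX)\bh = -(\be_j^\top\bh)\bfeta + \bfeta(\be_j^\top\bh)=\mathbf{0}_n$, so the right-hand side of \eqref{eq:3692} vanishes and hence $\|\bh-\tbh\|_2=\|\bpsi-\tbpsi\|_2=0$. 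You instead bypass \eqref{eq:3692} entirely and verify directly that $\hbbeta(\bep,\bX)$ satisfies the KKT conditions of the perturbed problem, invoking uniqueness from strong convexity. Both routes rest on the same two cancellations; the paper's is shorter because it recycles (i), while yours is self-contained and makes the mechanism (preservation of residuals and of the stationarity inclusion) more explicit. The derivation of \eqref{eq:divergence-intermediary} by scaling $\bfeta\mapsto s\bfeta$ and differentiating at $s=0$ is the same in both.
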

The content of the above proposition appeared in \cite[Proposition 5.2]{bellec2020out}
with variables $(\by,\bX)$ instead of $(\bep,\bX)$.
It follows from strong convexity and
the KKT conditions of $\hbbeta$ and $\tbbeta$.
Its proof is provided below for completeness.
An application of the above Proposition~\ref{prop:Lipschitz_in_X} to normalized $\bpsi$ yields the following. 

\begin{corollary} Under the conditions of Proposition \ref{prop:Lipschitz_in_X} and 
with the notation of \Cref{prop:calc_deri}, 
at a point where $\|\bpsi\|_2>0$ and $\bpsi$ is Fr\'echet differentiable,  
\begin{equation} \label{bound_ratio_jacobian}
    \bigg\| \nabla_{\bz}\bigg(\frac{\bpsi}{\|\bpsi\|_2}\bigg)
    \bigg\|_{F}^{2}  
    \leq  \frac{L\|\ba\|_{2}^{2}}{n\tau}
    +\frac{nL^2(\bh^{\top}\ba)^{2}
    }{\|\bpsi\|_2^2},
\end{equation} 
and with the $\nabla_{\by}\bpsi$ in \eqref{hbpsi-y} 
\begin{equation}\label{diff_ratio}
\bigg(\nabla_{\bz}\bigg(\frac{\bpsi}{\|\bpsi\|_2}\bigg)
+ \big(\ba^\top\bh\big)\frac{\nabla_{\by}\bpsi}{\|\bpsi\|_2}\bigg)^\top
\bP_{\bpsi}^\perp= {\bf 0}. 
\end{equation} 
\end{corollary}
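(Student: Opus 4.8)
The two displays call for different arguments: \eqref{bound_ratio_jacobian} is a Frobenius-norm estimate that follows from Proposition~\ref{prop:Lipschitz_in_X}(iii), while \eqref{diff_ratio} is an exact cancellation between the two Jacobians $\nabla_{\bz}\bpsi$ and $\nabla_{\by}\bpsi$. For \eqref{bound_ratio_jacobian} I would first invoke the chain rule: since the Fr\'echet derivative of $\bv\mapsto\bv/\|\bv\|_2$ at $\bv\neq\bzero$ is $\|\bv\|_2^{-1}\bP_{\bv}^\perp$, we get $(\nabla_{\bz}(\bpsi/\|\bpsi\|_2))^\top=\|\bpsi\|_2^{-1}\bP_{\bpsi}^\perp(\nabla_{\bz}\bpsi)^\top$ as in \eqref{eq:gradient_bpsi_normalized}. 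As $\bP_{\bpsi}^\perp$ is an orthogonal projection, $\|\bP_{\bpsi}^\perp M\|_F\le\|M\|_F$, hence $\|\nabla_{\bz}(\bpsi/\|\bpsi\|_2)\|_F^2\le\|\bpsi\|_2^{-2}\|\nabla_{\bz}\bpsi\|_F^2$. It then suffices to bound $\|\nabla_{\bz}\bpsi\|_F^2=\sum_{i\in[n]}\|\partial\bpsi/\partial z_i\|_2^2$ column by column: the first inequality of Proposition~\ref{prop:Lipschitz_in_X}(iii) applied with $\bfeta=t\be_i$, after discarding the nonnegative $\hbbeta$-term, dividing by $t^2$ and letting $t\to0$, gives $\|\partial\bpsi/\partial z_i\|_2^2\le L(n\tau)^{-1}\|\ba\|_2^2\psi_i^2+L^2(\bh^\top\ba)^2$. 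Summing over $i$ produces the differential form of \eqref{eq:bound_fnorm_jacobian}, namely $\|\nabla_{\bz}\bpsi\|_F^2\le L(n\tau)^{-1}\|\ba\|_2^2\|\bpsi\|_2^2+nL^2(\bh^\top\ba)^2$, and dividing by $\|\bpsi\|_2^2$ yields \eqref{bound_ratio_jacobian}.

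For \eqref{diff_ratio}, when $g$ is twice continuously differentiable I would read the two Jacobians from Proposition~\ref{prop:calc_deri}(ii): $(\nabla_{\bz}\bpsi)^\top=-(\bh^\top\ba)\bV-\diag(\bpsi')\bX\bM\ba\bpsi^\top$, while differentiating the map $\by\mapsto\bpsi(\by-\bX\bbeta,\bX)$ and eliminating $\partial\hbbeta/\partial\by$ through the stationarity identity $\bX^\top\bpsi=n\,\nabla g(\hbbeta)$ gives $(\nabla_{\by}\bpsi)^\top=\bV$. The decisive point is that weighting $\nabla_{\by}\bpsi$ by $\bh^\top\ba$ cancels the two copies of $\bV$, so that $\bigl((\bh^\top\ba)\nabla_{\by}\bpsi+\nabla_{\bz}\bpsi\bigr)^\top=-\diag(\bpsi')\bX\bM\ba\,\bpsi^\top$ is rank one with right factor $\bpsi^\top$. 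Because $\bpsi^\top\bP_{\bpsi}^\perp=\bzero^\top$, right-multiplying by $\bP_{\bpsi}^\perp$ annihilates this remainder, which is exactly \eqref{diff_ratio} and, specialized to $\ba=\be_j$, the fact that the matrix inside the first trace of the decomposition of $\xi_j-\xi_{\bff}(\bz_j)$ vanishes.

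The main obstacle is the general strongly convex penalty, where $\bV$ and $\bM$ are undefined and the explicit formulas are unavailable. There I would argue directly from the exact invariance in Proposition~\ref{prop:Lipschitz_in_X}(iv): whenever $\bfeta^\top\bpsi=0$, the vector $\bpsi$ is unchanged under the joint perturbation $(\bep,\bX)\mapsto(\bep+(\bh^\top\ba)\bfeta,\bX+\bfeta\ba^\top)$, because both the residuals $\veps_i-\bx_i^\top\bh$ and the stationarity of $\hbbeta$ are preserved (the case $\ba=\be_j$ recorded in \eqref{eq:divergence-intermediary} extends verbatim to arbitrary $\ba$). Differentiating this invariance at a point of Fr\'echet differentiability gives $\bigl[(\bh^\top\ba)(\nabla_{\by}\bpsi)^\top+(\nabla_{\bz}\bpsi)^\top\bigr]\bfeta=\bzero$ for every $\bfeta\perp\bpsi$, i.e.\ $\bigl((\bh^\top\ba)\nabla_{\by}\bpsi+\nabla_{\bz}\bpsi\bigr)^\top\bP_{\bpsi}^\perp=\bzero$, recovering \eqref{diff_ratio}. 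The remaining care is to legitimize differentiating the a.e.-defined Jacobians, which I would do via the almost-everywhere chain rule already used in Proposition~\ref{prop:calc_deri} or by mollifying $g$ and passing to the limit.
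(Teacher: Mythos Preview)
Your proposal is correct and matches the paper's proof. For \eqref{bound_ratio_jacobian} both you and the paper apply the chain rule for $\bv\mapsto\bv/\|\bv\|_2$, drop the projection, and then pass to the differential limit in Proposition~\ref{prop:Lipschitz_in_X}(iii); for \eqref{diff_ratio} the paper proceeds exactly as in your second (general-$g$) argument, invoking the invariance $\bpsi\bigl(\bep+(\bh^\top\ba)\bP_{\bpsi}^\perp\bfeta,\bX+(\bP_{\bpsi}^\perp\bfeta)\ba^\top\bigr)=\bpsi(\bep,\bX)$ from \eqref{eq:3692} and differentiating. Your explicit smooth-$g$ computation via $(\nabla_{\by}\bpsi)^\top=\bV$ and the rank-one remainder $-\diag(\bpsi')\bX\bM\ba\,\bpsi^\top$ is an additional sanity check the paper does not record, but it is consistent with the invariance argument.
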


\begin{proof}
    For \eqref{bound_ratio_jacobian},
     by the chain rule
    $
    \| 
    \nabla_{\bz} (\bpsi / \| \bpsi \|_2)^\top
    \|_{F}^2
    = 
    \| \bpsi \|_2^{-2} \| \bP_{\bpsi}^{\perp} (\nabla_{\bz} \bpsi)^\top \|_{F}^2
    \leq
    \| \bpsi \|_2^{-2} 
    \| \nabla_{\bz} \bpsi \|_{F}^2
    $.
    By definition of the Frobenius norm
    $
    \| \nabla_{\bz} \bpsi \|_F^2
    = 
    \sum_{i \in [n]} \| \partial \bpsi / \partial z_i \|_2^2.
    $
    By \eqref{eq:bound_fnorm_jacobian} with $\ba$
    replaced by $t \ba$ and taking the limit as $t\to 0$
    we obtain
    $
    \| \nabla_{\bz} (\bpsi / \| \bpsi \|_2) \|_{F}^2
    \leq
    L(n\tau)^{-1}\|\ba\|_{2}^{2}+nL^{2}(\bh^{\top}\ba)^{2}\|\bpsi\|_{2}^{-2}
    $.

For \eqref{diff_ratio}, we have 
$$
\bigg(\bigg(\frac{\partial}{\partial \bfeta}
+ \big(\ba^\top\bh\big) \frac{\partial}{\partial \by}\bigg)\frac{\bpsi}{\|\bpsi\|_2}\bigg)\bP_{\bpsi}^\perp
= \bigg(\frac{\bP_{\bpsi}^\perp}{\|\bpsi\|_2}\bigg(\frac{\partial\bpsi}{\partial \bfeta}
+ \big(\ba^\top\bh\big) \frac{\partial \bpsi}{\partial \by}\bigg)\bigg)\bP_{\bpsi}^\perp={\bf 0}
$$
due to $\bpsi\big(\bep + \bh(\bep,\bX)^\top\ba(\bP_{\bpsi}^\perp\bfeta), \bX + (\bP_{\bpsi}^\perp\bfeta)\ba^\top\big)=\bpsi(\bep,\bX)$ 
by \eqref{eq:3692} as in the proof of Proposition~\ref{prop:Lipschitz_in_X}~(iv). Note that if $F(\by,\bX)$ and $G(\bep,\bX)$
        are 
        functions such that $G(\bep,\bX) = F(\bep + \bX\bbeta,\bX)$
        and $F(\by,\bX) = G(\by - \bX\bbeta,\bX)$ 
        then $(\partial/\partial y_i) F(\by,\bX) = 
        (\partial/\partial \veps_i) G(\bep,\bX)$ whenever
        $F$ is Fr\'echet
        differentiable at $(\by,\bX)$ and $G$ is
        Fr\'echet differentiable at $(\bep,\bX)$
        (i.e., translation by a constant in the variables does not
        change the derivatives).
\end{proof}

\begin{proof} [of Proposition \ref{prop:Lipschitz_in_X} (i)] 
Let $\partial g (\cdot)$ denote the subdifferential of $g$. 
The KKT conditions read 
$\bX^{\top} \bpsi \in n \partial g (\hbbeta)$
and
$\tbX^{\top} \tbpsi \in n \partial g (\tbbeta)$.
Taking the difference and by $\tau$-strong convexity of $g$, we have 
\begin{align*}
n \tau \| \hbbeta - \tbbeta \|_{2}^{2}
& \le
( \hbbeta - \tbbeta )^\top
( \bX^{\top} \bpsi - \tbX^{\top} \tbpsi )
\\&=
( \hbbeta - \tbbeta )^\top
\big[
(\bX - \tbX) ^{\top} \bpsi
+
\tbX{}^{\top} (\bpsi - \tbpsi)
\big] 
\end{align*}
For the second term,
\begin{align*}
    & (\hbbeta-\tbbeta)^{\top}\tbX{}^\top(\bpsi-\tbpsi) 
    \\ & = (
    \tbX\bh-\tbX\tbh 
    )^{\top}(\bpsi-\tbpsi)\\
 & =-(\bep-\bX\bh-(\tbep-\tbX\tbh))^{\top}(\bpsi-\tbpsi)+(\bep-\tbep+\tbX\bh-\bX\bh)^{\top}(\bpsi-\tbpsi)
 .
\end{align*}
Since $\psi$ is non-decreasing and $L$-Lipschitz, 
$L^{-1} \| \bpsi - \tbpsi \|_{2}^{2}
\le
\bigl(\bep-\bX\bh-(\tbep-\tbX\tbh)\bigr)^{\top}(\bpsi-\tbpsi)$
holds.
Combining the above displays we 
    obtain
    \eqref{eq:3692}.
\end{proof}

\begin{proof} [of Proposition \ref{prop:Lipschitz_in_X} (ii)] 
    For fixed values of $(\bep,\bX,\bh,\bpsi)$,
    inequality
    \eqref{eq:3692} divided by $1+\|\bh-\tbh\|_2 + \|\bpsi-\tbpsi\|_2$
    implies that $(\tbh,\tbpsi) \to (\bh,\bpsi)$
    as $(\tbep,\tbX)\to (\bep,\bX)$, hence the function
    $(\bep,\bX) \mapsto (\bh(\bep,\bX),\bpsi(\bep,\bX))$ is
    everywhere continuous.
    This implies that
    $S(K) = \sup_{(\bep,\bX)\in K}
    \big((n\tau)^{-1}\|\bpsi(\bep,\bX)\|_2^2
    +L\|\bh(\bep,\bX)\|_2^2\big)
    $
    is finite for any compact $K\subset \R^n\times \R^{n\times p}$.
    The Cauchy-Schwartz inequality on the right hand side  of \eqref{eq:3692}
    gives 
    for any $(\bep,\bX),(\tbep,\tbX)\in K$
\begin{align*}
&n\tau\| \bh-\tbh\| _{2}^{2}+L^{-1}\| \bpsi-\tbpsi\| _{2}^{2} 
\\& \leq\| \bX-\tbX\| _{\op}(\| \bh-\tbh\| _{2}\| \bpsi\| _{2}+\| \bh\| _{2}\| \bpsi-\tbpsi\| _{2})+\| \bep-\tbep\| _{2}\| \bpsi-\tbpsi\| _{2}\\
                                                             & \leq\| \bX-\tbX\| _{\op}(n\tau\| \bh-\tbh\| _{2}^{2}+L^{-1}\| \bpsi-\tbpsi\| _{2}^{2})^{\frac{1}{2}}S(K)^{\frac12}
  +
  \| \bep-\tbep\| _{2}
  \| \bpsi-\tbpsi\| _{2}, 
\end{align*}
This implies that $(\bep,\bX) \mapsto (\bh,\bpsi)$ is Lipschitz on $K$. 
\end{proof}

\begin{proof} [of Proposition \ref{prop:Lipschitz_in_X} (iii)]
Combined with \eqref{eq:3692} with $\bep = \tbep$,
we have 
\begin{align*}
&n\tau\|\bh-\tbh\|_{2}^{2}+L^{-1}\|\bpsi-\tbpsi\|_{2}^{2} \\& \leq-(\bh-\tbh)^{\top}\ba(\bfeta^{\top}\bpsi)+(\bh^{\top}\ba)\bfeta^{\top}(\bpsi-\tbpsi)\\
 & \leq \|\bh-\tbh\|_{2}\|\ba\|_{2}|\bfeta^{\top}\bpsi|+|\bh^{\top}\ba|\|\bfeta\|_{2}\|\bpsi-\tbpsi\|_{2} \\
 & \leq\bigl(n\tau\|\bh-\tbh\|_{2}^{2}+L^{-1}\|\bpsi-\tbpsi\|_{2}^{2}\bigr)^{1/2}\bigl((n\tau)^{-1}\|\ba\|_{2}^{2}(\bfeta^{\top}\bpsi)^{2}+L(\bh^{\top}\ba)^{2}\|\bfeta\|_{2}^{2}\bigr)^{1/2}
\end{align*}
so that the first inequality holds.
Taking summation over $\bfeta = \be_i$ for $i \in [n]$
gives \eqref{eq:bound_fnorm_jacobian}.
\end{proof}
\begin{proof} [of Proposition \ref{prop:Lipschitz_in_X} (iv)]
    For $\tbep = \bep + \bh(\bep,\bX)^\top\be_j \bfeta$
    and $\tbX = \bX + \bfeta \be_j^\top$
    we have $(\bX -\tbX)^\top \bpsi = 0$ thanks to $\bfeta^\top\bpsi = 0$
    as well as
    $\bep - \tbep +(\tbX - \bX)\bh = 0
    $. Hence the two terms in the right-hand side of
    \eqref{eq:3692} are 0 and $\|\bpsi - \tbpsi\|_2^2= 0$.
    Identity \eqref{eq:divergence-intermediary} then follows
    by definition of the Fr\'echet differentiability.
\end{proof}

\subsubsection{Approximation using smooth penalty $\tilde{g}^{\nu}$}

\begin{lemma} [Approximation of strongly convex functions] \label[lemma]{lem:approximation}
Let $g:\R^{p}\to\R$ be strongly convex with constant $\tau\geq0$.
Then for every $\nu>0$ there exists a real-analytic strongly
convex function $g_{\nu}:\R^{p}\to\R$ with constant $\tau$ such
that $g_{\nu}-\nu\leq g\leq g_{\nu}$. 
\end{lemma}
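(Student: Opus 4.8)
The plan is to reduce to the purely convex case and then to smooth. First I would peel off the quadratic: write $g = \tilde g + \frac{\tau}{2}\|\cdot\|_2^2$, where $\tilde g(\bx) = g(\bx) - \frac{\tau}{2}\|\bx\|_2^2$ is convex by \Cref{as:g} (the definition of $\tau$-strong convexity). If I can produce, for every $\nu>0$, a real-analytic \emph{convex} function $\tilde g_\nu$ with $\tilde g \le \tilde g_\nu \le \tilde g + \nu$, then $g_\nu := \tilde g_\nu + \frac{\tau}{2}\|\cdot\|_2^2$ is real-analytic, is $\tau$-strongly convex (its Hessian dominates $\tau\bI_p$ since $\nabla^2 \tilde g_\nu \succeq 0$), and satisfies $g \le g_\nu \le g + \nu$, which is the claim. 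Thus it suffices to treat the case $\tau=0$, i.e. to approximate an arbitrary convex $\tilde g:\R^p\to\R$ from above by a real-analytic convex function within uniform error $\nu$.

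For the smoothing I would start from mollification by a centered Gaussian, $\tilde g_t(\bx)=\int_{\R^p}\tilde g(\bx - t\by)\,\varphi(\by)\,d\by$ with $\varphi$ the standard normal density. This prototype has three favourable features that I would verify directly: (i) it is real-analytic in $\bx$ for each fixed $t>0$, because the Gaussian kernel is entire and one may differentiate under the integral / appeal to the analyticity of heat-kernel smoothing; (ii) it is convex, because an average of the convex functions $\bx\mapsto \tilde g(\bx - t\by)$ against a probability density is convex; and (iii) it dominates $\tilde g$, since by Jensen's inequality and $\int \by\,\varphi(\by)\,d\by=\bzero$ one gets $\tilde g_t(\bx)\ge \tilde g\bigl(\int(\bx-t\by)\varphi(\by)\,d\by\bigr)=\tilde g(\bx)$. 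So the lower bound $\tilde g\le \tilde g_t$, convexity, and analyticity come for free.

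The obstruction is the upper bound $\tilde g_t \le \tilde g + \nu$ \emph{uniformly} on $\R^p$. A single fixed scale $t>0$ cannot work: for rapidly growing convex functions the Gaussian average may even diverge (e.g. $\tilde g(x)=e^{e^x}$ on $\R$), and where it is finite the error $\tilde g_t-\tilde g$ is governed by the local Lipschitz modulus of $\tilde g$, which is unbounded. The remedy is a variable bandwidth $t=t(\bx)$ shrinking to $0$ fast enough where $\tilde g$ is steep, so that the contribution $\tilde g(\bx-t(\bx)\by)-\tilde g(\bx)$ integrates to at most $\nu$ on each region; since a convex $\tilde g$ is locally Lipschitz, a continuous $t(\cdot)$ with $t(\bx)\to 0$ as $\|\bx\|_2\to\infty$ can be chosen to enforce this bound while keeping the integral finite everywhere.

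The main difficulty — and the step I would treat most carefully — is that letting the bandwidth depend on $\bx$ generically destroys both convexity and the clean Jensen lower bound, since differentiating twice in $\bx$ now picks up derivatives of $t(\bx)$. Reconciling the three requirements simultaneously — real-analyticity (which rules out compactly supported mollifiers), exact preservation of convexity (which rules out generic Whitney/Carleman real-analytic approximation, whose Hessian can dip below $0$), and a uniform global error despite arbitrary growth (which rules out a fixed scale) — is precisely the content of the global convex-approximation technique developed, e.g., by Azagra. I would therefore invoke that construction, in which the variable-scale integral smoothing is arranged so as to preserve convexity exactly; after the reduction of the first paragraph this yields the real-analytic convex $\tilde g_\nu$ with $\tilde g\le\tilde g_\nu\le\tilde g+\nu$, and re-adding $\frac{\tau}{2}\|\cdot\|_2^2$ completes the proof.
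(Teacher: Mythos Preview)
Your proposal is correct and follows exactly the paper's approach: subtract $\frac{\tau}{2}\|\cdot\|_2^2$ to reduce to the convex case, invoke Azagra's global real-analytic convex approximation theorem to get $\tilde g \le \tilde g_\nu \le \tilde g + \nu$, and add the quadratic back. The discussion of Gaussian mollification and variable bandwidth is accurate motivation, but ultimately superfluous since you (like the paper) defer the technical work to Azagra's result.
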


\begin{proof}
Since 
$g$ is proper, i.e.,  
$-\infty \not \in g (\R^{p})$
and 
$ \{ \bb \in \R^{p}\ |\ g (\bb) < +\infty \} \neq \emptyset$,
by Proposition 10.8 in \cite{BC17}, $g$ is strongly convex with
constant $\tau\geq0$ if and only if $f:=g-(\tau/2)\| \cdot\| _{2}^{2}$
is convex. By Theorem 1 in \cite{Aza13}, there exists a function
$f_{\nu}:\R^{p}\to\R$ real-analytic and convex that
satisfies $f_{\nu}-\nu\leq f \leq f_\nu$. The conclusion follows by letting
$g_{\nu}:=f_{\nu}+(\tau/2)\| \cdot\| _{2}^{2}$.
\end{proof}

\begin{lemma} \label[lemma]{lem:continuity} 
Let $\rho: \R \to \R$ be convex
and continuously differentiable 
with derivative $\psi=\rho'$ being $L$-Lipschitz. 
Let $g, \tilde g: \R^{p} \to \R$ be strongly convex with parameter $\tau, \tilde \tau \geq 0$.
Let $\| g - \tilde g \|_{\infty} 
= \max_{\bx \in \R^{p}} | g (\bx) - \tilde g (\bx) |$.
For $\bb \in \R^{p}$,
let
$
L (\bb;g) = \frac{ 1 } { n } \sum _{i \in [n]} 
\rho ( y_{i} - \bx_{i}^{\top} \bb )
+
g ( \bb )
$
and define
\[
\hbbeta = \argmin_{\bb \in \R^{p}} L ( \bb ; g ),
\quad
\tbbeta = \argmin_{\bb \in \R^{p}} L ( \bb ; \tilde g ),
\quad
\bpsi = \psi ( \by - \bX \hbbeta ), 
\quad
\tbpsi = \psi ( \by - \bX \tbbeta ).
\] 
Then inequality
$((\tau + \tilde \tau)/2) 
\| \tbbeta - \hbbeta \|_{2}^{2} + ( nL )^{-1} \| \bpsi - \tbpsi \|_{2}^{2}
\leq
2 \| g - \tilde g \|_{\infty}
$ holds.
\end{lemma}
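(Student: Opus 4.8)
The plan is to exploit that the two objectives $L(\cdot\,;g)$ and $L(\cdot\,;\tilde g)$ share the common data-fitting term $\bb\mapsto \frac1n\sum_i \rho(y_i-\bx_i^\top\bb)$ and differ only through the penalty, and to combine the strong convexity of $g$ and $\tilde g$ at their respective minimizers with a co-coercivity estimate for the monotone Lipschitz map $\psi$.

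First I would record the stationarity conditions. Since $\rho$ is differentiable, optimality of $\hbbeta$ and $\tbbeta$ reads $\tfrac1n\bX^\top\bpsi\in\partial g(\hbbeta)$ and $\tfrac1n\bX^\top\tbpsi\in\partial\tilde g(\tbbeta)$. Writing $\bu=\tfrac1n\bX^\top\bpsi$ and $\tilde\bu=\tfrac1n\bX^\top\tbpsi$, I would apply the strong convexity inequality for $g$ at $\hbbeta$ evaluated at $\tbbeta$, and for $\tilde g$ at $\tbbeta$ evaluated at $\hbbeta$:
\begin{align*}
g(\tbbeta) &\ge g(\hbbeta) + \bu^\top(\tbbeta-\hbbeta) + \tfrac{\tau}{2}\|\tbbeta-\hbbeta\|_2^2, \\
\tilde g(\hbbeta) &\ge \tilde g(\tbbeta) + \tilde\bu^\top(\hbbeta-\tbbeta) + \tfrac{\tilde\tau}{2}\|\hbbeta-\tbbeta\|_2^2.
\end{align*}
Adding these and rearranging isolates on the left the quantity $[g(\tbbeta)-\tilde g(\tbbeta)]+[\tilde g(\hbbeta)-g(\hbbeta)]$, which is at most $2\|g-\tilde g\|_\infty$ by definition of the sup-norm, and leaves on the right the cross term $(\bu-\tilde\bu)^\top(\tbbeta-\hbbeta)$ together with $\tfrac{\tau+\tilde\tau}{2}\|\tbbeta-\hbbeta\|_2^2$.

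The crux is to lower bound the cross term by $\tfrac{1}{nL}\|\bpsi-\tbpsi\|_2^2$. Since $\bu-\tilde\bu=\tfrac1n\bX^\top(\bpsi-\tbpsi)$, I have $(\bu-\tilde\bu)^\top(\tbbeta-\hbbeta)=\tfrac1n(\bpsi-\tbpsi)^\top\bX(\tbbeta-\hbbeta)$. Setting $a_i=y_i-\bx_i^\top\hbbeta$ and $b_i=y_i-\bx_i^\top\tbbeta$, the $i$-th entry of $\bX(\tbbeta-\hbbeta)$ equals $a_i-b_i$ while $(\bpsi-\tbpsi)_i=\psi(a_i)-\psi(b_i)$, so the cross term equals $\tfrac1n\sum_i(\psi(a_i)-\psi(b_i))(a_i-b_i)$. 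Because $\psi$ is nondecreasing (as $\rho$ is convex) and $L$-Lipschitz, the co-coercivity bound $(\psi(a)-\psi(b))(a-b)\ge \tfrac1L(\psi(a)-\psi(b))^2$ holds for all $a,b$ (a consequence of $0\le\psi(a)-\psi(b)\le L(a-b)$ when $a\ge b$), and summing over $i$ gives $(\bu-\tilde\bu)^\top(\tbbeta-\hbbeta)\ge \tfrac1{nL}\|\bpsi-\tbpsi\|_2^2$. Feeding this and the bound $2\|g-\tilde g\|_\infty$ back into the combined inequality yields $\tfrac{\tau+\tilde\tau}{2}\|\tbbeta-\hbbeta\|_2^2+\tfrac1{nL}\|\bpsi-\tbpsi\|_2^2\le 2\|g-\tilde g\|_\infty$, which is the claim.

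I expect the only subtle point to be this co-coercivity step, i.e.\ converting the inner product $(\bpsi-\tbpsi)^\top\bX(\tbbeta-\hbbeta)$ into the squared norm $\|\bpsi-\tbpsi\|_2^2$; everything else is a direct manipulation of the two subgradient inequalities and the definition of $\|g-\tilde g\|_\infty$. Notably the argument uses no twice-differentiability of the penalties, only the first-order subgradient characterization of strong convexity, so it applies to the non-smooth strongly convex $g$ that motivates the lemma (and in fact the data-fitting loss need only be convex with $L$-Lipschitz monotone derivative, not strongly convex).
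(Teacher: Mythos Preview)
Your proposal is correct and follows essentially the same route as the paper's own proof: both combine the two first-order strong-convexity inequalities for $g$ at $\hbbeta$ and $\tilde g$ at $\tbbeta$ (using the KKT subgradients $\tfrac1n\bX^\top\bpsi$ and $\tfrac1n\bX^\top\tbpsi$), bound the resulting right-hand side by $2\|g-\tilde g\|_\infty$, and then lower bound the cross term $\tfrac1n\sum_i(\psi(a_i)-\psi(b_i))(a_i-b_i)$ via the co-coercivity $(\psi(a)-\psi(b))(a-b)\ge L^{-1}(\psi(a)-\psi(b))^2$ of the nondecreasing $L$-Lipschitz $\psi$. The paper's argument is identical in structure and in the key step you correctly flagged as the only subtle point.
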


\begin{proof} [of Lemma \ref{lem:continuity}] 
Denote by $\partial g (\bb)$ subdifferential of $g$ at $\bb\in\R^p$.
The KKT conditions read
\[
(1/n) \bX^{\top} \bpsi \in \partial g ( \hbbeta ), 
\qquad
(1/n) \bX^{\top} \tbpsi \in \partial \tilde g ( \tbbeta ).
\] 
By the definition of the strongly convexity, the above display implies that 
\[
(\tau / 2) \| \tbbeta - \hbbeta \|_{2}^{2} 
+ (\tbbeta - \hbbeta)^{\top} (1/n) \bX^{\top} \bpsi \leq g ( \tbbeta ) - g ( \hbbeta ), 
\] 
\[
(\tilde \tau / 2)
\| \hbbeta - \tbbeta \|_{2}^{2} 
+ (\hbbeta - \tbbeta)^{\top} (1/n) \bX^{\top} \tbpsi \leq \tilde g ( \hbbeta ) - \tilde g ( \tbbeta ). 
\]
Summing over the above displays, we have 
\[
((\tau + \tilde \tau) / 2)
\| \hbbeta - \tbbeta \|_{2}^{2}
+
( \hbbeta - \tbbeta )^{\top} ( 1/n ) \bX^{\top} ( \tbpsi - \bpsi )
\leq
g ( \tbbeta ) - \tilde g ( \tbbeta ) + \tilde g ( \hbbeta )
- g ( \hbbeta )
\leq 
2 \| g - \tilde g \|_{\infty}
.
\] 
We notice that the second term in the left hand side is 
can be rewritten
\[
    \frac1n
\bigl\langle \by - \bX \hbbeta - ( \by - \bX \tbbeta ), 
\psi ( \by - \bX \hbbeta ) - \psi ( \by - \bX \tbbeta )
\bigr\rangle
= \frac1n \sum_{i=1}^n (a_i - b_i)(\psi(a_i) - \psi(b_i))
\] 
where $a_i = y_i-\bx_i^\top\hbbeta$ and $b_i=y_i - \bx_i^\top\tbbeta$.
Since $\psi$ non-decreasing and $L$-Lipschitz,
inequality
$(a_i - b_i) (\psi(a_i) - \psi(b_i)) 
= |a_i - b_i| |\psi ( a_i ) - \psi ( b_i )|
\geq \frac 1 L (  \psi(a_i)  - \psi(b_i) )^{2}
$ completes the proof.
\end{proof}

\begin{proof}
    [of \Cref{lem:inter_step_zj}
for $g$ $\tau$-strongly convex but not necessarilily
continuously differentiable]
    In this proof, we approximate the non-smooth $g$ with smooth $g$ using Lemma \ref{lem:approximation}.
    Let $g$ be strongly convex with constant $\tau > 0$, not necessarily twice differentiable. 
    By Lemma \ref{lem:approximation}, for all $\nu > 0 $, there exists $\tilde g^{\nu}$ strongly convex with constant $\tau>0$ such that 
    $\| \tilde g ^{\nu} - g \|_{\infty} \leq \nu.$
    Let 
    $\tbbeta^{\nu} = \argmin_{\bb \in \R^{p}} L (\bb; \tilde{g}^{\nu})$ 
    and 
    $\tbpsi^{\nu} = \psi (\by - \bX \tbbeta^{\nu})$ 
    be as in Lemma \ref{lem:continuity}.
    For any $\delta>0$,
    \begin{equation}
            \E\Bigl[
                \frac{n}{\|\tbpsi^\nu\|_{2}^{2} +\delta}
                    \sum_{j\in[p]}I_{\mathscr{E}_{j}}\tilde h_{j}^{2}
                \Bigr]^{1/2}
    \leq 
            \frac{
                \bigl[(1+\sqrt{\frac p n})^2+\frac1n\bigr]^{1/2}
            }{\phi_{\min}(\bSigma)^{1/2}( 1 - \eta_{n} )_{+}^{2}  u_*}
    +
    \frac{\bigl[\frac{2p}{n\tau} + \E[\|\tbh^\nu\|_2^2]\bigr]^{1/2}}{u_*^{1/2}}
    \label{eq:with-tilde-nu}
    \end{equation}
    by
    \eqref{eq:5221}
    since $\tilde g^\nu$ is twice continuously differentiable.
    By Lemma \ref{lem:continuity}, we have
    \[
    \tau \| \tbbeta^{\nu} - \hbbeta \|_{2}^{2} + (nL)^{-1} \| \tbpsi^{\nu} - \bpsi \|_{2}^{2} \leq 2 \nu.
    \] 
    This implies that, as $\nu \to 0+$,
    the pointwise convergence 
    $(\tbh_j^\nu,\tbpsi^\nu) \to (\bh,\bpsi)$
    holds.
    By the dominated convergence theorem,
    a sufficient condition that
    \eqref{eq:with-tilde-nu}
    holds with $(\tbh^\nu,\tbpsi^\nu)$ replaced by its pointwise limit
    $(\bh,\bpsi)$ inside the two expectations
    in \eqref{eq:with-tilde-nu} is that
    $\E\sup_{\nu\in(0,1)} \|\tilde \bh^\nu\|_2^2 < +\infty$.
    By Lemma \ref{lem:continuity}, 
            \begin{equation} \label{eq:2588}
                    \|\tbh^{\nu}\|_{2}^{2}
                    \leq
                    2\|\tbh^{\nu}-\bh\|_{2}^{2}+2\|\bh\|_{2}^{2}
                    <
                    (2\nu/\tau)+2\|\bh\|_{2}^{2}
            \end{equation}
            which provides integrability of $\sup_{\nu\in (0,1)} \|\tbh^\nu\|_2^2$
    as $\E[\| \bSigma^{1/2} \bh \|_{2}^{2}] < +\infty$
    when the right-hand side of \eqref{eq:5221} is finite.
\end{proof}

\section{Auxiliary propositions}

\subsection{Decomposition of the design matrix into independent components}

\begin{proposition} [Independence between $\bX(\bI_{p} - \bb \ba^{\top})$ and $\bX\bb$] \label[proposition]{prop:indep_zj}
Let each row $\bx_i$ of $\bX \in \R^{n \times p}$ satisfy that $\bx_{i} \sim^{iid} N(\bzero, \bSigma). $
Then for any deterministic vectors $\ba, \bb \in \R^{p}$,   
$\bSigma \bb = (\bb^{\top} \bSigma \bb) \ba$ holds
if and only if 
$\bX ( \bI_{p} - \bb \ba^{\top} )$ is independent with $\bX \bb$.
Furthermore, if the above holds and the inverse matrix $\bSigma^{-1}$ exists, then $(\bb^{\top} \bSigma \bb)(\ba^{\top} \bSigma^{-1} \ba) = 1$.

\end{proposition}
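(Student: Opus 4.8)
The plan is to exploit two structural facts: the rows of $\bX$ are mutually independent, which reduces the matrix-level claim to a statement about a single generic row, and independence of jointly Gaussian random vectors is equivalent to their cross-covariance vanishing. Everything else is elementary linear algebra.

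First I would reduce to one row. Fix $\ba,\bb\in\R^p$. The $i$-th row of $\bX(\bI_p-\bb\ba^\top)$ equals $\bx_i^\top(\bI_p-\bb\ba^\top)$ and depends on $\bx_i$ only, while the $i$-th entry of $\bX\bb$ equals $\bx_i^\top\bb$ and also depends on $\bx_i$ only. Since the $\bx_i$ are mutually independent, the $n$ pairs $(\bx_i^\top(\bI_p-\bb\ba^\top),\,\bx_i^\top\bb)$ are mutually independent across $i$, so the joint law factorizes row by row. Consequently $\bX(\bI_p-\bb\ba^\top)$ is independent of $\bX\bb$ if and only if, for one generic row $\bx\sim N(\bzero,\bSigma)$, the vector $(\bI_p-\ba\bb^\top)\bx$ is independent of the scalar $\bb^\top\bx$. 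The forward direction follows by multiplying the factorized densities; the converse because extracting the first row and first entry expresses the single-row pair as a function of the two matrices, and functions of independent objects are independent.

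Next I would note that $(\bI_p-\ba\bb^\top)\bx$ and $\bb^\top\bx$ are jointly Gaussian, being a single linear image of $\bx$, so their independence is equivalent to a vanishing cross-covariance. As $\bx$ is centered with $\E[\bx\bx^\top]=\bSigma$,
\[
\mathrm{Cov}\big((\bI_p-\ba\bb^\top)\bx,\ \bb^\top\bx\big)
= (\bI_p-\ba\bb^\top)\bSigma\bb
= \bSigma\bb-(\bb^\top\bSigma\bb)\,\ba,
\]
which is the zero vector precisely when $\bSigma\bb=(\bb^\top\bSigma\bb)\ba$. This settles both directions of the equivalence simultaneously.

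Finally, for the scalar identity I would assume the relation holds and $\bSigma^{-1}$ exists, taking $\bb\neq\bzero$ (the degenerate case $\bb=\bzero$ makes the relation vacuous and is excluded), so that $s:=\bb^\top\bSigma\bb>0$ by positive-definiteness of $\bSigma$. The relation then reads $\ba=\bSigma\bb/s$, whence
\[
\ba^\top\bSigma^{-1}\ba
= s^{-2}\,\bb^\top\bSigma\,\bSigma^{-1}\,\bSigma\bb
= s^{-2}\,\bb^\top\bSigma\bb
= s^{-1},
\]
so that $(\bb^\top\bSigma\bb)(\ba^\top\bSigma^{-1}\ba)=s\cdot s^{-1}=1$. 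The only points genuinely requiring care are the row-wise reduction in the first step and the clean invocation of the Gaussian uncorrelatedness-equals-independence principle; once those are in place I do not anticipate any real obstacle, as the remaining computations are immediate.
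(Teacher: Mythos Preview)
Your proposal is correct and follows essentially the same approach as the paper: reduce to a single row by independence of the $\bx_i$, use that independence of jointly Gaussian vectors is equivalent to vanishing cross-covariance, and compute that cross-covariance to obtain the stated condition; the final scalar identity is derived by the same direct substitution. Your write-up is slightly more careful than the paper's in justifying both directions of the row-wise reduction and in flagging the degenerate case $\bb=\bzero$ for the last identity, but the argument is the same.
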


\begin{proof} 
From the fact that $(\bX(\bI_p - \bb \ba^{\top}), \bX\bb)$ can be represented in a linear transformation of $n\times p $ iid $N(0,1)$ random variable,
the pair is distributed in a multivariate normal distribution. 
Since the rows of $\bX$ are independent, 
the independence between $\bX (\bI_{p} - \bb \ba^{\top})$ and $\bX \bb$ 
reduces to the independence between $\bx_{i}^{\top} (\bI_{p} - \bb \ba ^{\top})$ 
and $\bx_{i}^{\top} \bb$ for each $i \in [n]$, 
which holds if and only if the two random quantities are uncorrelated
in the sense that 
\[
\E [(\bx_{i}^{\top} (\bI_{p} - \bb \ba^{\top}))(\bx_{i}^{\top} \bb)]
=
\E [\bb ^{\top} \bx_{i} \bx_{i} ^{\top} (\bI_{p} - \bb \ba^{\top})]
=
\bb ^{\top}
\bSigma
(\bI _{p} - \bb \ba ^{\top})
=\bzero.
\] 
If the inverse matrix $\bSigma^{-1}$ exists, then the above display implies $( \bb^{\top} \bSigma \bb ) ( \ba^{\top} \bSigma^{-1} \ba )=1$.
\end{proof}

\subsection{$\psi$ at the residuals is almost surely nonzero}

\begin{proposition} \label[proposition]{prop:psi_not_zero}
    If Assumptions~\ref{as:rho}, \ref{as:g} and~\ref{as:feature} hold
    then $\P(\psi(\by - \bX\hbbeta) \neq \bzero)=1$.
\end{proposition}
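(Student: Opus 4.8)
The plan is to combine the structural lower bound on $\psi$ from \Cref{as:rho} with the KKT optimality conditions and the continuity of the noise. First I would record that $\psi=\rho'$ is continuous and nondecreasing, since $\rho$ is convex and continuously differentiable, so its zero set $Z=\{x\in\R:\psi(x)=0\}$ is a closed interval. If $Z$ had nonempty interior, then $\psi\equiv 0$ there and hence $\psi'=0$ a.e.\ on that interior, forcing $\psi'(x)+\psi(x)^2=0<K^2$ on a set of positive Lebesgue measure, which contradicts \Cref{as:rho}. Therefore $Z$ has empty interior and consists of at most one point; I write $Z=\{x_0\}$ when $Z\neq\emptyset$, and note that if $Z=\emptyset$ then no coordinate of $\bpsi$ can vanish and the claim is immediate.

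Next I would use optimality to pin down the only way $\bpsi$ could be the zero vector. Under \Cref{as:g} the objective in \eqref{eq:bbeta} is strongly convex, so $\hbbeta$ is its unique minimizer and is characterized by the KKT condition $\tfrac1n\bX^\top\bpsi\in\partial g(\hbbeta)$ with $\bpsi=\psi(\by-\bX\hbbeta)$. On the event $\{\bpsi=\bzero\}$ this reduces to $\bzero\in\partial g(\hbbeta)$, i.e.\ $\hbbeta$ minimizes $g$ alone. Since $g$ is $\tau$-strongly convex it admits a unique, deterministic minimizer $\bb^g=\argmin_{\bb\in\R^p}g(\bb)$, so $\{\bpsi=\bzero\}\subseteq\{\hbbeta=\bb^g\}$. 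Consequently, on $\{\bpsi=\bzero\}$ each residual satisfies $y_i-\bx_i^\top\bb^g\in Z=\{x_0\}$; in particular the first coordinate yields $\ep_1=x_0-\bx_1^\top(\bbeta-\bb^g)$, using $y_1=\bx_1^\top\bbeta+\ep_1$.

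Finally I would conclude using the continuity of the noise. By \Cref{as:feature}, $\bep$ is independent of $\bX$ and has a Lebesgue density, hence its marginal $\ep_1$ has a density and $\P(\ep_1=c)=0$ for every fixed $c\in\R$. Conditioning on $\bX$, the quantity $x_0-\bx_1^\top(\bbeta-\bb^g)$ is a constant, so $\P\bigl(\ep_1=x_0-\bx_1^\top(\bbeta-\bb^g)\mid\bX\bigr)=0$; integrating over $\bX$ gives $\P(\bpsi=\bzero)\le\P\bigl(\ep_1=x_0-\bx_1^\top(\bbeta-\bb^g)\bigr)=0$, which is the desired conclusion.

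The main obstacle is that $\hbbeta$ is itself an intricate function of the random data $(\bep,\bX)$, so a priori $\bpsi$ is $\psi$ evaluated at random residuals whose joint law is awkward to control directly. The device that resolves this is the KKT reduction in the second step: on $\{\bpsi=\bzero\}$ the random $\hbbeta$ collapses to the single deterministic point $\bb^g$, after which the event becomes a fixed affine constraint on the continuously distributed noise, of probability zero. The one point deserving care is the first step, namely that \Cref{as:rho} rules out $\psi$ vanishing on a nondegenerate interval rather than merely at isolated points; this is exactly what the empty-interior argument establishes, and it is what guarantees $Z$ is a single point so that $\bpsi=\bzero$ forces all residuals to coincide.
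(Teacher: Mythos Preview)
Your proof is correct and follows essentially the same approach as the paper: both argue from \Cref{as:rho} that the zero set of $\psi$ contains at most one point, then use the KKT conditions together with the strong convexity of $g$ to show that $\bpsi=\bzero$ forces $\hbbeta$ to equal the unique (deterministic) minimizer of $g$, and finally conclude via the continuous distribution of $\bep$. The only cosmetic difference is that you isolate a single coordinate $\ep_1$ and condition on $\bX$, whereas the paper works with the full vector $\bep-\bX(\bb_0-\bbeta)$; both are equivalent.
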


\begin{proof} [of Proposition \ref{prop:psi_not_zero}] 
    If $\psi(\by-\bX\hbbeta)=0$ then $\hbbeta$ must be a minimizer
    of the penalty $g$.
    Let $\bb_0$ be a minimizer of $g$, which is unique by strong
    convexity.

Our assumption on the convexity of $\rho$ implies 
that $\psi(x)$ is non-decreasing in $x \in \R$. 
Combined with our assumption $\psi' (x) + \psi^{2} (x) \geq K^{2} > 0$ 
for every point $x \in \R$, 
this implies that 
$\psi (x) = 0$
at no more than one point in $\R$.
(Otherwise, there exists an open interval 
on which $\psi (x) = 0$ and $\psi' (x) + \psi^{2} (x) = 0$.
A contradiction then follows.)

Thus we have
$
\P(\psi(\by-\bX\hbbeta)=\mathbf{0})
\le \P(\psi(\bep - \bX(\bb_0 - \bbeta)) = \mathbf{0} )
= 0$
as $\bep - \bX(\bb_0 - \bbeta)$ has continuous distribution
by \Cref{as:feature} and $\{x\in\R: \psi(x)=0\}$ has zero Lebesgue measure.

\end{proof}

\bibliography{M-estimators-2nd-steins-sphere}
\bibliographystyle{plainnat}

\end{document}